\documentclass[11pt]{article}

\usepackage{amssymb,amsmath,euscript,bbm,xcolor,graphicx,epstopdf}
\RequirePackage[numbers]{natbib}

\usepackage{amsmath,amssymb,amsthm,bm,euscript,bbm}
\usepackage{verbatim}
\usepackage{graphicx}
\usepackage{epstopdf}
\usepackage{color}
\usepackage{pstricks,pst-node,pst-plot}
\usepackage{rotating}
\usepackage{enumitem}
\setlength{\oddsidemargin}{.1in} \setlength{\evensidemargin}{.1in}
\setlength{\textwidth}{6.2in} \setlength{\textheight}{9in}
\setlength{\topmargin}{-0.5in} \setlength{\footskip}{1cm}

\newtheorem{proposition}{Proposition}[section]
\newtheorem{lemma}[proposition]{Lemma}
\newtheorem{theorem}[proposition]{Theorem}

\def\la{\lambda}
\def\La{\Lambda}
\def\ep{\varepsilon}

\def\l{{\langle}}
\def\r{\rangle}

\newcommand{\ol}{\overline}
\def\R{{\mathbb R}}

\def\E{{\mathbb E}}
\def\P{{\mathbb P}}

%
%
%
%
\makeatletter \@addtoreset{equation}{section} \makeatother
%
%

%
%
%
%
%
%
%
%
%
%
%
%
%
%
%
%
%
%
%
%
\newenvironment{remark}{%
    \vspace{0.3cm} \pagebreak [2]%
    \par%
    \refstepcounter{proposition}
    \noindent%
    {\bf Remark~\theproposition\  }}{\qed}%
%
%
%
%
%
%
%

\begin{document}

\title {The expected Euler characteristic approximation to excursion probabilities of Gaussian vector fields}
\author{Dan Cheng \thanks{Partially supported by NSF Grant DMS-1902432 and Simons Foundation Collaboration Grant 854127.} \\ Arizona State University
 \and Yimin Xiao \thanks{Partially supported  by the NSF grants DMS-1855185 and DMS-2153846. } \\ Michigan State University }

\maketitle

\begin{abstract}
	Let $\{(X(t), Y(s)): t\in T, s\in S\}$ be an $\R^2$-valued, centered, unit-variance smooth Gaussian vector field, where $T$ and $S$ are compact
	rectangles in $\R^N$. It is shown that, as $u\to \infty$, the joint excursion probability $\P \{\sup_{t\in T} X(t) \geq u, \sup_{s\in S} Y(s) \geq u \}$ 
	can be approximated by $\E\{\chi(A_u)\}$, the expected Euler characteristic of the excursion set $A_u=\{(t,s)\in T\times S: X(t) \ge u, Y(s) \ge u\}$,
	 such that the error is super-exponentially small. This verifies the expected Euler characteristic heuristic (cf.  Taylor, Takemura and Alder (2005), Alder and 
	 Taylor (2007)) for a large class of smooth Gaussian 
	 vector fields. 
\end{abstract}

\noindent{\small{\bf Keywords}: Gaussian vector fields, excursion probability, excursion set, Euler characteristic, correlation, 
asymptotics, EEC, super-exponentially small.}

\noindent{\small{\bf Mathematics Subject Classification}:\ 60G15, 60G60, 60G70.}

\section{Introduction}
 For a real-valued Gaussian random field $\{Z(t), t\in \R^N\}$ and a compact rectangle $T \subset \R^N$,  
 the excursion probability $\P\{\sup_{t\in T}Z(t) \ge u\}$ is a classical and very important problem in both 
 probability and statistics due to the vast applications in many areas such as $p$-value computations, 
 risk control and extreme event analysis, etc. Various methods for precise approximations of  
 $\P \{\sup_{t \in T} Z(t) \geq u \}$ have been developed. These include the double sum method, the tube
method, the Euler characteristic method, and the Rice method.
We refer to the monographs \citet{Piterbarg:1996}, \citet{Adler:2007}, 
\citet{AzaisW09} and the references therein for comprehensive accounts. However, extreme value theory of multivariate random fields (or random vector fields) is still under-developed and only a few authors have studied the joint excursion probability of multivariate random fields. Piterbarg and Stamatovic \cite{PS05} and Debicki et al.  \cite{DKMR10} established large deviation results for the excursion probability in multivariate case. Anshin \cite{Anshin06} obtained precise asymptotics for a special class of nonstationary bivariate Gaussian processes, under quite restrictive conditions. Hashorva and Ji \cite{HashorvaJi2014} and Debicki et al. 
\cite{Debicki:2015} derived precise asymptotics for the excursion probability of certain
multivariate Gaussian processes defined on the real line $\mathbb{R}$ with specific cross dependence structures.
Zhou and Xiao \cite{ZhouXiao07} studied the excursion probability of a class of non-smooth  bivariate Gaussian random fields 
by applying the double sum method. Their main results show explicitly that the excursion probabilities of bivariate Gaussian random fields depend not only on the
smoothness parameters of the coordinate fields  but also on their maximum cross-correlation.

In statistical applications, such joint excursion probability is the critical tool for constructing the simultaneous confidence region in a continuous-domain
approach \cite{Sun:2001}. In particular, motivated by the expected Euler characteristic (EEC) approximation to excursion probabilities of real-valued 
Gaussian random fields \cite{TTA05,Adler:2007}, we study in this work that the EEC approximation holds in general to the joint excursion probability of Gaussian vector fields. 

Let $\{(X(t), Y(s)): t\in T, s\in S\}$ be an $\R^2$-valued, centered, unit-variance smooth Gaussian vector field, 
where $T$ and $S$ are compact rectangles in $\R^N$. Let $A_u=\{(t,s)\in T\times S: X(t) \ge u, Y(s) \ge u\}$ be 
the excursion set where both components $X$ and $Y$ exceeding the level $u$. Our main objective is to show that, 
as $u\to \infty$, the joint excursion probability $\P \{\sup_{t\in T} X(t) \geq u, \sup_{s\in S} Y(s) \geq u \}$ can be 
approximated by $\E\{\chi(A_u)\}$, the EEC of $A_u$, such that the error is super-exponentially small; see Theorem 
\ref{Thm:MEC approximation je} below for precise description. This approximation result shows that the maximum correlation 
between  $X(t)$ and $Y(s)$, denoted by $R$ (see \eqref{eq:R} below), plays an important role in both $\E\{\chi(A_u)\}$ and the super-exponentially small error. Moreover, as we will see in the proof of Theorem \ref{Thm:MEC approximation je} (cf. $\mathcal{M}_0$ in \eqref{Eq:M0},  
$\mathcal{M}_1$ in \eqref{Eq:O-Udelta}, and $\mathcal{M}_2$ in \eqref{Eq:M2}),
the points where $R$ is attained make the major contribution for $\E\{\chi(A_u)\}$. Based on this 
observation, we also establish two simpler approximations in Theorem \ref{Thm:MEC approximation je2} under 
the boundary condition \eqref{Eq:boundary} on nonzero derivatives of the correlation function over boundary points where $R$ is attained and in Theorem 
\ref{Thm:MEC approximation} under the condition that there is only a unique point attaining $R$, respectively.

In general, the EEC approximation $\E\{\chi(A_u)\}$ can be expressed by the Kac-Rice formula as an integral; see \eqref{eq:EEC} in Theorem \ref{Thm:MEC approximation je}. In \cite{TTA05,Adler:2007}, the authors derived a nice expression for $\E\{\chi(A_u)\}$ called Gaussian kinematic formula, since they assumed that the real-valued Gaussian field has unit variance, which is an important condition to simplify the integration formula of $\E\{\chi(A_u)\}$. However, in our case here, the integration formula of $\E\{\chi(A_u)\}$ (see \eqref{eq:EEC}) mainly depends on the conditional correlation of $X(t)$ and $Y(s)$, 
which varies over $T\times S$. It turns to be very difficult to get an explicit expression for $\E\{\chi(A_u)\}$. Instead, one can apply the Laplace method to extract the term with the largest order of $u$ from the integral such that the remaining error is $o(1/u)\E\{\chi(A_u)\}$. To explain this, we show several examples on specific calculations in Section \ref{sec:example}. For an intuitive understanding on the EEC approximation, we may roughly treat the main term $\E\{\chi(A_u)\}$ as $g(u)e^{-u^2/(1+R)}$ (by approximating the integral in \eqref{eq:EEC}); and the error term $o(e^{-u^2/(1+R) - \alpha u^2})$ is super-exponentially small, where $g(u)$ is a polynomial in $u$ and $\alpha>0$ is some constant.

The paper is organized as follows. We introduce first the notations and assumptions in Section \ref{sec:notation}, and then state our main results Theorems \ref{Thm:MEC approximation je}, \ref{Thm:MEC approximation je2} and \ref{Thm:MEC approximation} in Section \ref{sec:main}. The proofs are then provided in three steps: (i) sketch the main ideas in Section \ref{sec:sketch}; (ii) study the super-exponentially small errors between the joint excursion probability and EEC in Sections \ref{sec:small} and \ref{sec:diff}; and (iii) provide final proofs for the main results in Section \ref{sec:proof}. Finally, we show in Section \ref{sec:example} several examples on evaluating the EEC and hence approximating the joint excursion probability explicitly.

\section{Notations and assumptions}\label{sec:notation}

Let $\{(X(t), Y(s)): t\in T, s\in S\}$ be an $\R^2$-valued, centered, unit-variance Gaussian vector field, where $T$ and $S$ are compact 
rectangles in $\R^N$. Let
\begin{equation}\label{eq:R}
\begin{split}
r(t,s)=\E\{X(t)Y(s)\}, \quad R=\sup_{t\in T,\,  s\in S}r(t,s).
\end{split}
\end{equation}
For a function $f(\cdot) \in C^2(\R^N)$ and $t\in \R^N$, let 
\begin{equation}\label{Eq:notatoin-diff}
\begin{split}
f_i (t)&=\frac{\partial f(t)}{\partial t_i}, \quad f_{ij}(t)=\frac{\partial^2 f(t)}{\partial t_i\partial t_j}, \quad \forall i, j=1, \ldots, N;\\
\nabla f(t) &= (f_1(t), \ldots , f_N(t))^{T}, \quad \nabla^2 f(t) = \left(f_{ij}(t)\right)_{ i, j = 1, \ldots, N}.
\end{split}
\end{equation}
For a symmetric matrix $B$, denote by $B \prec 0$ and $B \preceq 0$ if all the eigenvalues of $B$ are negative (i.e., $B$ is 
negative definite) and nonpositive (i.e., $B$ is  negative semi-definite), respectively. For two functions $f(x)$ and $g(x)$, we 
write $f(x)\sim g(x)$ as $x\to \infty$ if $\lim_{x\to \infty}f(x)/g(x)=1$.

Denote by $T=\prod^N_{i=1}[a_i, b_i]$ and $S=\prod^N_{i=1}[a'_i, b'_i]$, where $-\infty< a_i<b_i<\infty$ and $-\infty< a'_i<b'_i<\infty$. 
Following the notations in \citet[p.134]{Adler:2007},  we show below that $T$ and $S$ can be decomposed into unions of their interiors 
and the lower dimension faces. Based on these decompositions,  the Euler characteristic of the excursion set $A_u$ can be 
represented (see Section \ref{sec:main}).

A face $K$ of dimension $k$ is defined by fixing a subset $\sigma(K) \subset \{1, \ldots, N\}$ of size $k$ and a subset $\varepsilon(K) 
= \{\varepsilon_j, j\notin \sigma(K)\} \subset \{0, 1\}^{N-k}$ of size $N-k$ so that
\begin{equation*}
\begin{split}
K= \{ t=(t_1, \ldots, t_N) \in T: \, &a_j< t_j <b_j  \ {\rm if} \ j\in \sigma(K), \\
&t_j = (1-\ep_j)a_j + \ep_{j}b_{j}  \ {\rm if} \ j\notin \sigma(K)  \}.
\end{split}
\end{equation*}
Denote by $\partial_k T$ the collection of all $k$-dimensional faces in $T$. Then the interior of $T$ is denoted by $\overset{\circ}{T}=\partial_N T$ 
and the boundary of $T$ is given by $\partial T = \cup^{N-1}_{k=0}\cup _{K\in \partial_k T} K$. For each $t\in K\in \partial_k T$ and $s\in L\in \partial_l S$, let
\begin{equation*}
\begin{split}
\nabla X_{|K}(t) &= (X_{i_1} (t), \ldots, X_{i_k} (t))^T_{i_1, \ldots, i_k \in \sigma(K)}, \quad \nabla^2 X_{|K}(t) = (X_{mn}(t))_{m, n \in \sigma(K)}, \\
\nabla Y_{|L}(s) &= (Y_{i_1} (s), \ldots, Y_{i_l} (s))^T_{i_1, \ldots, i_l \in \sigma(L)}, \quad \nabla^2 Y_{|L}(s) = (Y_{mn}(s))_{m, n \in \sigma(L)}.
\end{split}
\end{equation*}
We can decompose $T$ and $S$  into  
\begin{equation*}
T= \bigcup_{k=0}^N \partial_k T = \bigcup_{k=0}^N\bigcup_{K\in \partial_k T}K,\qquad S= \bigcup_{l=0}^N \partial_l S = \bigcup_{l=0}^N\bigcup_{L\in \partial_l S}L,
\end{equation*}
respectively. For each $K\in \partial_k T$ and $L\in \partial_l S$, we define the \emph{number of extended outward maxima
above $u$} as
\begin{equation*}
\begin{split}
M_u^E (X,K) & := \# \{ t\in K: X(t)\geq u, \nabla X_{|K}(t)=0, \nabla^2 X_{|K}(t)\prec 0, \varepsilon^*_jX_j(t) \geq 0, \forall j\notin \sigma(K) \},\\
M_u^E (Y,L) & := \# \{ s\in L: Y(s)\geq u, \nabla Y_{|L}(s)=0, \nabla^2 Y_{|L}(s)\prec 0, \varepsilon^*_jY_j(s) \geq 0, \forall j\notin \sigma(L) \},
\end{split}
\end{equation*}
where $\ep^*_j=2\ep_j-1$, and define the \emph{number of local maxima above $u$} as
\begin{equation*}
\begin{split}
M_u (X,K) & := \# \{ t\in K: X(t)\geq u, \nabla X_{|K}(t)=0, \nabla^2 X_{|K}(t)\prec 0\},\\
M_u (Y,L) & := \# \{ s\in L: Y(s)\geq u, \nabla Y_{|L}(s)=0, \nabla^2 Y_{|L}(s)\prec 0 \}.
\end{split}
\end{equation*}
 Clearly, $M_u^E (X,K) \le M_u (X,K)$ and $M_u^E (Y,L) \le M_u (Y,L)$.

We shall make use of the following smoothness condition ({\bf H}1) and regularity conditions ({\bf H}2) and ({\bf H}3).
\begin{itemize}
	\item[({\bf H}1)] $X, Y \in C^2(\R^N)$ almost surely and their second derivatives satisfy the
	\emph{uniform mean-square H\"older condition}: there exist constants $C, \delta>0$ such that
	\begin{equation*}
	\begin{split}
	\E(X_{ij}(t)-X_{ij}(t'))^2 &\leq C \|t-t'\|^{2\delta}, \quad \forall t,t'\in T,\ i, j= 1, \ldots, N,\\
	\E(Y_{ij}(s)-Y_{ij}(s'))^2 &\leq C \|s-s'\|^{2\delta}, \quad \forall s,s'\in S,\ i, j= 1, \ldots, N.
	\end{split}
	\end{equation*}
	
	\item[({\bf H}2)] For every $(t, t', s)\in T^2\times S$ with $t\neq t'$, the Gaussian vector
	\begin{equation*}
	\begin{split}
	\big(&X(t), \nabla X(t), X_{ij}(t), X(t'), \nabla X(t'), X_{ij}(t'), \\
	&Y(s), \nabla Y(s), Y_{ij}(s), 1\leq i\leq j\leq N\big)
	\end{split}
	\end{equation*}
	is  non-degenerate; and for every $(s, s', t)\in S^2\times T$ with $s\neq s'$, the Gaussian vector
	\begin{equation*}
	\begin{split}
	\big(&Y(s), \nabla Y(s), Y_{ij}(s), Y(s'), \nabla Y(s'), Y_{ij}(s'), \\
	&X(t), \nabla X(t), X_{ij}(t), 1\leq i\leq j\leq N\big)
	\end{split}
	\end{equation*}
	is  non-degenerate.
	
	\item[({\bf H}3)] For every $(t, s)\in \partial_k T\times S$, $0\le k\le N-2$, such that $r(t,s)=R$, and 
	that the index set $\mathcal{I}^R_X(t,s) = \{ \ell: \frac{\partial r}{\partial t_\ell}(t,s)=0\}$ contains at least 
	two indices, the Hessian matrix
	\begin{equation}\label{eq:Hessian_I}
	\left(\frac{\partial^2 r}{\partial t_i\partial t_j}(t,s)\right)_{i,j\in \mathcal{I}^R_X(t,s) }\preceq 0.
\end{equation}
	For every $(t, s)\in  T\times \partial_l S$, $0\le l\le N-2$, such that $r(t,s)=R$, and that the index set 
	$\mathcal{I}^R_Y(t,s) = \{ \ell: \frac{\partial r}{\partial s_\ell}(t,s)=0\}$ contains at least two indices, the Hessian matrix
	\[
 \left(\frac{\partial^2 r}{\partial s_m\partial s_n}(t,s)\right)_{m,n\in \mathcal{I}^R_Y(t,s) } \preceq 0.
	\]
\end{itemize}

Although ({\bf H}3) looks technical, it is in fact a mild condition imposed only on the lower-dimension boundary points $(t,s)$ 
with $r(t,s)=R$. Roughly speaking, it shows that the correlation function should have a negative semi-definite Hessian matrix 
on boundary critical points where  the maximum correlation $R$ is attained. Since $r(t,s)=R$ implies $\frac{\partial r}{\partial t_\ell}(t,s) =0$ 
for all $\ell \in \sigma(K)$, we have $\mathcal{I}^R_X(t,s) \supset \sigma (K)$. Similarly, $\mathcal{I}^R_Y(t,s) \supset \sigma (L)$. 
We show below that, for $k=N-1$ or $k=N$, the property \eqref{eq:Hessian_I} is always satisfied.

(i) If $k = N$, then $t$ becomes a maximum point of $r$ (as a function of $t$) in the interior of $T$ and 
$\mathcal{I}^R_X(t,s) = \sigma (K) =\{1, \cdots, N\}$, implying \eqref{eq:Hessian_I}.

(ii) For $k=N-1$, we distinguish two cases. If $\mathcal{I}^R_X(t,s) = \sigma (K)$, then $t$ becomes a maximum point of 
$r$ restricted on $K$, hence \eqref{eq:Hessian_I} holds. If $\mathcal{I}^R_X(t,s) =\{1, \cdots, N\}$, let $s$ be fixed, it follows from Taylor's formula that
\begin{equation*}
r (t',s) =r (t,s) + (t'-t)^T \nabla^2 r(t,s) (t'-t)+o(\|t'-t\|^2), \quad t'\in T,
\end{equation*}
where $\nabla^2 r(t,s)$ is the Hessian with respect to $t$. Notice that $\{(t'-t)/\|t'-t\|: t'\in T\}$ contains all directions in $\R^N$ 
since $t\in K \in \partial_{N-1}T$, together with the fact $r(t,s)=R$, we see that $\nabla^2 r(t,s)$ cannot have any positive eigenvalue and hence \eqref{eq:Hessian_I} holds.

It is also evident from the 1D Taylor's formula that \eqref{eq:Hessian_I} holds if $\mathcal{I}^R_X(t,s)$ contains only one index. 
Combining these facts, together with the observations,
\begin{equation*}
	\begin{split}
		\frac{\partial r}{\partial t_i}(t,s) &= \E\{X_{i}(t)Y(s)\}, \quad \frac{\partial^2 r}{\partial t_i\partial t_j}(t,s) = \E\{X_{ij}(t)Y(s)\},\\
		\frac{\partial r}{\partial s_i}(t,s) &= \E\{X(t)Y_{i}(s)\}, \quad \frac{\partial^2 r}{\partial s_i\partial s_j}(t,s) = \E\{X(t)Y_{ij}(s)\},
	\end{split}
\end{equation*}
we obtain the following result.
\begin{proposition}\label{prop:H3}
	Under the condition $({\bf H}3)$, we have that, for every $(t, s)\in T\times S$ such that $r(t,s)=R$, the matrices
	\[
	(\E\{X_{ij}(t)Y(s)\})_{i,j\in \mathcal{I}^R_X(t,s) }\preceq 0 \quad {\rm and }  \quad (\E\{X(t)Y_{kl}(s)\})_{k,l\in \mathcal{I}^R_Y(t,s) } \preceq 0,
	\]
	where the index sets $\mathcal{I}^R_X(t,s)$ and $\mathcal{I}^R_Y(t,s)$ are defined respectively as
	\begin{equation*}
		\begin{split}
			\mathcal{I}^R_X(t,s) = \{ \ell: \E\{X_\ell(t)Y(s)\}=0\} \quad \text{\rm and } \quad \mathcal{I}^R_Y(t,s) = \{ \ell: \E\{X(t)Y_\ell(s)\}=0\}.
		\end{split}
	\end{equation*}
\end{proposition}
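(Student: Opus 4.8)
The plan is to observe that the two second-moment matrices in the statement are nothing but the restricted Hessians of the correlation function $r$, and that the index sets $\mathcal{I}^R_X$ and $\mathcal{I}^R_Y$ are precisely the coordinate directions along which the corresponding first-order derivative of $r$ vanishes. Concretely, differentiating $r(t,s)=\E\{X(t)Y(s)\}$ under the expectation---which is justified by the $C^2$-smoothness in (\textbf{H}1) together with dominated convergence applied to the mean-square derivatives---yields the observation formulas $\partial r/\partial t_i=\E\{X_i(t)Y(s)\}$ and $\partial^2 r/\partial t_i\partial t_j=\E\{X_{ij}(t)Y(s)\}$, and similarly in the $s$-variable. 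Under this identification, the definition $\mathcal{I}^R_X(t,s)=\{\ell:\E\{X_\ell(t)Y(s)\}=0\}$ coincides with $\{\ell:\partial r/\partial t_\ell(t,s)=0\}$, and the claim $(\E\{X_{ij}(t)Y(s)\})_{i,j\in\mathcal{I}^R_X}\preceq 0$ becomes the purely analytic assertion that the Hessian $(\partial^2 r/\partial t_i\partial t_j(t,s))_{i,j\in\mathcal{I}^R_X}$ is negative semi-definite at every $(t,s)$ with $r(t,s)=R$. By symmetry the $Y$-statement reduces in the same way to a Hessian condition in $s$, so I would write out only the $X$-case and invoke symmetry for the other.

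Having made this reduction, I would fix a point $(t,s)$ with $r(t,s)=R$, let $K\in\partial_k T$ be the unique face containing $t$, and split into cases according to $k$ and the size of $\mathcal{I}^R_X(t,s)$. For $k=N$ and $k=N-1$ the negative semi-definiteness holds unconditionally by cases (i) and (ii) established above: either $t$ is a relative maximizer of $r(\cdot,s)$ on the face, or the full gradient vanishes and the directional Taylor argument applies. For $k\le N-2$ I would distinguish: when $|\mathcal{I}^R_X(t,s)|\ge 2$ the conclusion is exactly hypothesis (\textbf{H}3); when $|\mathcal{I}^R_X(t,s)|=1$ the restricted Hessian is the single scalar $\partial^2 r/\partial t_\ell^2$, whose nonpositivity follows from the one-dimensional Taylor expansion of $r$ along that coordinate at its (possibly one-sided) maximum, since the first derivative vanishes there; and when $|\mathcal{I}^R_X(t,s)|=0$ the matrix is $0\times 0$ and vacuously $\preceq 0$. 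Because $r(t,s)=R$ forces $\partial r/\partial t_\ell(t,s)=0$ for every $\ell\in\sigma(K)$, we always have $\mathcal{I}^R_X(t,s)\supseteq\sigma(K)$, whence $|\mathcal{I}^R_X(t,s)|\ge k$; this guarantees that the empty-matrix case can only occur at a corner and that the enumeration above is exhaustive.

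The one genuinely delicate point is the $k=N-1$ sub-case with $\mathcal{I}^R_X(t,s)=\{1,\dots,N\}$, where $t$ lies on a boundary face yet all first-order partials of $r$ vanish, so the ordinary unconstrained second-order maximality test is not directly available. The resolution is the observation that a point on a codimension-one face is approached from all directions within $T$, that is, $\{(t'-t)/\|t'-t\|:t'\in T\}$ fills all of $\R^N$; inserting this into the quadratic Taylor remainder $r(t',s)=r(t,s)+(t'-t)^T\nabla^2 r(t,s)(t'-t)+o(\|t'-t\|^2)$ and using $r(t',s)\le R=r(t,s)$ rules out any positive eigenvalue of $\nabla^2 r(t,s)$. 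All remaining steps are bookkeeping: assembling the four cases for the $X$-matrix and repeating the symmetric argument for the $Y$-matrix completes the proof.
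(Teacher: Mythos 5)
Your proposal is correct and follows essentially the same route as the paper: it identifies the cross-moment matrices with restricted Hessians of $r$ via differentiation under the expectation, invokes $({\bf H}3)$ for boundary faces of dimension at most $N-2$ with at least two vanishing partials, and handles the remaining cases ($k=N$, $k=N-1$, and singleton or empty index sets) by the interior/face maximality and directional Taylor arguments that the paper itself gives just before the proposition. The only difference is that you make the exhaustiveness of the case split (via $\mathcal{I}^R_X(t,s)\supseteq\sigma(K)$) slightly more explicit, which is a cosmetic improvement rather than a new idea.
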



\section{Main results}\label{sec:main}
Here, we shall state our main results Theorems \ref{Thm:MEC approximation je}, \ref{Thm:MEC approximation je2} 
and \ref{Thm:MEC approximation}, whose proofs will be given in Section \ref{sec:proof}. Define respectively 
the excursion sets of $X$, $Y$ and $(X, Y)$ above level $u$ by 
\begin{equation*}
\begin{split}
A_u(X,T)&=\{t\in T: X(t) \geq u\},\\
A_u(Y,S)&=\{s\in S: Y(s) \geq u\} \quad {\text \rm and}\\
A_u:=A_u(X,T)\times A_u(Y,S)&=\{(t,s)\in T\times S: X(t) \geq u, Y(s) \geq u\}.
\end{split}
\end{equation*}
Let the \emph{number of extended outward critical points of index $i$
	above level $u$} be
\begin{equation*}
\begin{split}
\mu_i(X,K) & := \# \{ t\in K: X(t)\geq u, \nabla X_{|K}(t)=0, \text{index} (\nabla^2 X_{|K}(t))=i, \\
& \qquad \qquad \qquad \varepsilon^*_jX_j(t) \geq 0 \ {\rm for \ all}\ j\notin \sigma(K) \},\\
\mu_i(Y,L) & := \# \{ s\in L: Y(s)\geq u, \nabla Y_{|L}(s)=0, \text{index} (\nabla^2 Y_{|L}(s))=i, \\
& \qquad \qquad \qquad \varepsilon^*_jY_j(s) \geq 0 \ {\rm for \ all}\ j\notin \sigma(L) \}.
\end{split}
\end{equation*}
Recall that  $\ep^*_j=2\ep_j-1$ and the index of a matrix is defined as the number of its negative
eigenvalues. It follows from ({\bf H}1), ({\bf H}2) and the Morse theorem (see  Corollary 9.3.5 or 
pages 211--212 in \citet{Adler:2007}) that the Euler characteristic of the excursion set can be represented as
\begin{equation}\label{eq:Euler}
\begin{split}
\chi(A_u(X,T))&= \sum^N_{k=0}\sum_{K\in \partial_k T}(-1)^k\sum^k_{i=0} (-1)^i \mu_i(X,K),\\
\chi(A_u(Y,S))&= \sum^N_{l=0}\sum_{L\in \partial_l S}(-1)^l\sum^l_{i=0} (-1)^i \mu_i(Y,L).
\end{split}
\end{equation}
Since for two sets $D_1$ and $D_2$, $\chi(D_1\times D_2) = \chi(D_1)\chi(D_2)$, we have
\begin{equation}\label{Eq:Morse product}
\begin{split}
\chi(A_u)&=\chi(A_u(X,T)\times A_u(Y,S))= \chi(A_u(X,T))\times \chi(A_u(Y,S))\\
&=\sum^N_{k,l=0}\sum_{K\in \partial_k T, L\in \partial_l S}(-1)^{k+l}\bigg(\sum^k_{i=0} (-1)^i \mu_i(X, K)\bigg)\bigg(\sum^l_{j=0} (-1)^j \mu_j(Y, L)\bigg).
\end{split}
\end{equation}

Now we  state the following general result on the EEC approximation of the excursion probability.
\begin{theorem}\label{Thm:MEC approximation je} 
Let $\{(X(t),Y(s)): t\in T, s\in S\}$ be an $\R^2$-valued, centered, 
unit-variance Gaussian vector field satisfying $({\bf H}1)$, $({\bf H}2)$ and $({\bf H}3)$. Then there exists a constant
$\alpha>0$ such that as $u\to \infty$,
\begin{equation}\label{eq:EEC}
	\begin{split}
	&\quad \P \left\{\sup_{t\in T} X(t) \geq u, \sup_{s\in S} Y(s) \geq u \right\}\\
	&=\sum^N_{k,l=0}\sum_{K\in \partial_k T, L\in \partial_l S}(-1)^{k+l}\int_K\int_Ldtds\, p_{\nabla X_{|K}(t), \nabla Y_{|L}(s)}(0,0)  \E\big\{{\rm det}\nabla^2 X_{|K}(t){\rm det}\nabla^2 Y_{|L}(s)\\
	&\quad \times \mathbbm{1}_{\{X(t)\geq u, \ \varepsilon^*_\ell X_\ell(t) \geq 0 \ {\rm for \ all}\ \ell\notin \sigma(K)\}} \mathbbm{1}_{\{Y(s)\geq u, \ \varepsilon^*_\ell Y_\ell(s) \geq 0 \ {\rm for \ all}\ \ell\notin \sigma(L)\}} \big| \nabla X_{|K}(t)=\nabla Y_{|L}(s)=0\big\}\\
	&\quad  +o\left( \exp \left\{ -\frac{u^2}{1+R} -\alpha u^2 \right\}\right)\\
	&= \E\{\chi(A_u)\}+o\left( \exp \left\{ -\frac{u^2}{1+R} -\alpha u^2 \right\}\right).
	\end{split}
	\end{equation}
\end{theorem}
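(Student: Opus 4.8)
The displayed statement contains two claims: the middle equality is an exact Kac--Rice representation of $\E\{\chi(A_u)\}$, while the first equality asserts that this quantity approximates the joint excursion probability with a super-exponentially small error. The plan is to establish them in this order.

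For the Kac--Rice representation, I would start from the product Morse formula \eqref{Eq:Morse product} and take expectations term by term. On a single face $K\in\partial_k T$ the alternating sum $\sum_{i=0}^k(-1)^i\mu_i(X,K)$ is a \emph{signed} count of extended outward critical points, each weighted by $(-1)^{\mathrm{index}}=\mathrm{sgn}\,\det\nabla^2 X_{|K}$. Applying the Kac--Rice metatheorem (\citet{Adler:2007}) --- legitimate under $({\bf H}1)$ and $({\bf H}2)$ --- to the product of the two signed counts over $(t,s)\in K\times L$, and using $\mathrm{sgn}(\det)\cdot|\det|=\det$ to turn the Jacobian factor into a signed determinant, reproduces exactly the integrand of \eqref{eq:EEC}. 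This part is essentially bookkeeping once the metatheorem is available.

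For the error I would pass through the extended outward maxima. Writing $M_u^E(X)=\sum_k\sum_{K\in\partial_k T}\mu_k(X,K)$ and similarly $M_u^E(Y)$, non-degeneracy $({\bf H}2)$ together with $C^2$-smoothness $({\bf H}1)$ gives, almost surely, $\{\sup_T X\geq u\}=\{M_u^E(X)\geq1\}$ and $\{\sup_S Y\geq u\}=\{M_u^E(Y)\geq1\}$, because the global maximum over a rectangle is an extended outward maximum on the face containing it. Hence the probability equals $\P\{M_u^E(X)\geq1,\,M_u^E(Y)\geq1\}$. On the other side, \eqref{eq:Euler} gives $\chi(A_u(X,T))=M_u^E(X)+R_X$ with $R_X:=\sum_k\sum_{K\in\partial_k T}(-1)^k\sum_{i=0}^{k-1}(-1)^i\mu_i(X,K)$, and similarly $\chi(A_u(Y,S))=M_u^E(Y)+R_Y$, so that $\E\{\chi(A_u)\}-\P\{\cdots\}$ splits into the purely-maxima discrepancy $\E\{M_u^E(X)M_u^E(Y)\}-\P\{M_u^E(X)\geq1,M_u^E(Y)\geq1\}$ plus the cross terms $\E\{M_u^E(X)R_Y\}+\E\{R_XM_u^E(Y)\}+\E\{R_XR_Y\}$. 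For nonnegative integers $m,n$ the elementary bound $mn-\mathbbm{1}_{\{m\geq1\}}\mathbbm{1}_{\{n\geq1\}}\leq m(m-1)n+mn(n-1)$ reduces the first discrepancy to the factorial-moment counts $\E\{M_u^E(X)(M_u^E(X)-1)M_u^E(Y)\}$ and $\E\{M_u^E(X)M_u^E(Y)(M_u^E(Y)-1)\}$, while the cross terms are bounded in absolute value by unsigned two-point counts pairing a maximum of one field with a non-maximal critical point of the other.

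It remains to show that all of these quantities are $o(\exp\{-u^2/(1+R)-\alpha u^2\})$ for some $\alpha>0$. Each is a Kac--Rice integral over two or three points that I would bound by maximizing the exponential rate of the integrand. For the cross terms, conditioning on a field value $y\geq u$ pushes the conditional Hessian mean to $\approx -y\Lambda\prec0$, so the presence of a positive eigenvalue (non-maximal index) forces an $O(u)$ Hessian fluctuation and costs an extra factor $\exp(-cu^2)$ beyond the joint rate $\exp\{-u^2/(1+r(t,s))\}$; the factorial moments are handled similarly, the second high maximum of a field being super-exponentially costly. Since $\sup_{t,s}r(t,s)=R$, the joint rate peaks at $\exp\{-u^2/(1+R)\}$, and after multiplying by the extra penalty every term is super-exponentially below the main order, with a common $\alpha>0$ obtained by compactness of $T\times S$. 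The main obstacle is the sharp localization near the set $\{(t,s):r(t,s)=R\}$, where the joint rate is largest and the extra penalties must be shown to survive the correlation structure; this is exactly where $({\bf H}3)$, through Proposition \ref{prop:H3}, is needed, since it forces the conditional Hessian of one field given the other to be negative semi-definite at the lower-dimensional boundary points attaining $R$, preventing a non-negligible contribution there. Controlling the Gaussian densities $p_{\nabla X_{|K}(t),\nabla Y_{|L}(s)}(0,0)$ and the conditional determinant expectations uniformly across these degenerate points is the technically delicate core; once a uniform super-exponential bound is secured, summing over the finitely many faces $K$ and $L$ finishes the proof.
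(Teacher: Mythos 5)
Your overall architecture matches the paper's: reduce the excursion probability to $\E\{M_u^E(X)M_u^E(Y)\}$ via the a.s.\ identity $\{\sup_T X\ge u\}=\{M_u^E(X)\ge 1\}$ together with the elementary inequality $mn-\mathbbm{1}_{\{m\ge1\}}\mathbbm{1}_{\{n\ge1\}}\le m(m-1)n+mn(n-1)$, and then show that restoring the signed non-maximal critical points (your $R_X,R_Y$) costs only a super-exponentially small error because the conditional mean of the Hessian given high field values is negative definite of order $u$. This is precisely the plan of Sections \ref{sec:sketch}--\ref{sec:diff} (your global counts $M_u^E(X)=\sum_K M_u^E(X,K)$ just repackage the face-by-face Bonferroni bound \eqref{Ineq:lowerbound je}, and your cross terms $\E\{M_u^E(X)R_Y\}$ etc.\ are the content of Proposition \ref{Prop:simlify the high moment je}), and your identification of where $({\bf H}3)$ enters the main term is correct, including the need to split the $(x,y)$-integration so that $x-Ry$ and $y-Rx$ are both of order $u$.

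The gap is in your treatment of the factorial moments and of the cross-face products hidden inside $M_u^E(X)(M_u^E(X)-1)$. These are two-point Kac--Rice integrals $\int_K\int_{K'}p_{\nabla X_{|K}(t),\nabla X_{|K'}(t')}(0,0)\cdots\,dt\,dt'$ in which the density factor diverges as $t'\to t$ (within a face, or as $t,t'$ approach the common boundary of two adjacent faces), so ``bounding by maximizing the exponential rate of the integrand'' is not an available step: the integrand has no finite supremum, and the exponential rate alone does not improve near the diagonal. This is where most of the paper's technical work lives (Lemmas \ref{Lem:factorial moments je} and \ref{Lem:cross terms je}). One needs Piterbarg's local argument, in which the coincidence $t'\to t$ forces the additional degenerate conditioning $\nabla^2X(t)e=0$, and then $({\bf H}3)$ via Proposition \ref{prop:H3} to conclude that $\mathrm{Var}\big([X(t)+Y(s)]/2\,\big|\,\nabla X(t)=0,\nabla^2X(t)e=0\big)$ stays \emph{strictly} below $(1+R)/2$ (the inequality \eqref{Eq:comparing two sup}); without this the diagonal contribution would be of the same order as the main term. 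For adjacent faces the same degeneracy is handled by a covering argument (the sets $D_0,D_i$ in Lemma \ref{Lem:cross terms je}) that uses the extended-outward sign constraints $\varepsilon^*_jX_j\ge0$ in an essential way, which is also why the lower bound must be run with $M_u^E$ rather than $M_u$. So $({\bf H}3)$ is needed not only for the localization of the main term near $\{r=R\}$, as you state, but equally for the factorial moments and the adjacent-face cross terms; as written, your proposal does not close these estimates.
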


In general, the EEC approximation $\E\{\chi(A_u)\}$ is hard to compute, since the conditional expectation in 
\eqref{eq:EEC} involves the joint tail estimate and hence the conditional correlation on $X(t)$ and $Y(s)$, 
which varies over $T\times S$. However, one can apply the Laplace method to extract the term with the 
largest order of $u$ from $\E\{\chi(A_u)\}$ such that the remaining error is $o(1/u)\E\{\chi(A_u)\}$; see Section \ref{sec:example} for examples on this.
	
Note that, in \eqref{eq:EEC}, if $k=0$, then all terms involving $\nabla X_{|K}(t)$ and $\nabla^2 X_{|K}(t)$ in \eqref{eq:EEC} 
vanish. In particular, if $k=l=0$, then the integral in \eqref{eq:EEC} becomes a joint probability. We follow such notation in the results 
in Theorems \ref{Thm:MEC approximation je2} and \ref{Thm:MEC approximation} below as well. 

It can be seen from the proof of Theorem \ref{Thm:MEC approximation je} that those points attaining the maximal 
correlation $R$ make the major contribution for $\E\{\chi(A_u)\}$. Therefore, in many cases, the general EEC 
approximation $\E\{\chi(A_u)\}$ can be simplified.  The result below is based on the boundary condition 
\eqref{Eq:boundary} (which implies $({\bf H}3)$) on nonzero derivatives of the correlation function over boundary points where $R$ is attained.
\begin{theorem}\label{Thm:MEC approximation je2} Let $\{(X(t),Y(s)): t\in T, s\in S\}$ be an $\R^2$-valued, 
centered, unit-variance Gaussian vector field satisfying $({\bf H}1)$, $({\bf H}2)$ and the boundary condition 
\eqref{Eq:boundary}. Then there exists a constant $\alpha>0$ such that as $u\to \infty$,
	\begin{equation*}
	\begin{split}
	&\quad \P \left\{\sup_{t\in T} X(t) \geq u, \sup_{s\in S} Y(s) \geq u \right\}\\
	&=\sum^N_{k,l=0}\sum_{K\in \partial_k T, L\in \partial_l S}(-1)^{k+l}\int_K\int_L dtds\, p_{\nabla X_{|K}(t), \nabla Y_{|L}(s)}(0,0)  
	\E\big\{{\rm det}\nabla^2 X_{|K}(t){\rm det}\nabla^2 Y_{|L}(s)\\
	&\quad \times \mathbbm{1}_{\{X(t)\geq u, \ Y(s)\geq u\}}\big|\nabla X_{|K}(t)=\nabla Y_{|L}(s)=0\big\}  +o\left( \exp \left\{ -\frac{u^2}{1+R} -\alpha u^2 \right\}\right).
	\end{split}
	\end{equation*}
\end{theorem}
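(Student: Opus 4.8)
The plan is to obtain Theorem~\ref{Thm:MEC approximation je2} directly from Theorem~\ref{Thm:MEC approximation je}. Since the boundary condition \eqref{Eq:boundary} implies $({\bf H}3)$, the representation \eqref{eq:EEC} is available, so it suffices to show that replacing, in each summand of \eqref{eq:EEC}, the two extended-outward indicators by the single indicator $\mathbbm{1}_{\{X(t)\ge u,\, Y(s)\ge u\}}$ costs only a super-exponentially small error. Equivalently, for each pair $K\in\partial_k T$, $L\in\partial_l S$ I must bound the Kac--Rice integral of $\det\nabla^2 X_{|K}(t)\,\det\nabla^2 Y_{|L}(s)$ against the \emph{indicator gap}
\[
\Delta_u(t,s):=\mathbbm{1}_{\{X(t)\ge u,\,Y(s)\ge u\}}-\mathbbm{1}_{\{X(t)\ge u,\,\varepsilon^*_\ell X_\ell(t)\ge 0\ \forall \ell\notin\sigma(K)\}}\,\mathbbm{1}_{\{Y(s)\ge u,\,\varepsilon^*_\ell Y_\ell(s)\ge 0\ \forall \ell\notin\sigma(L)\}},
\]
conditioned on $\nabla X_{|K}(t)=\nabla Y_{|L}(s)=0$. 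This gap is supported on the event that $X(t),Y(s)\ge u$ while at least one outward derivative has the wrong sign, so only the boundary faces ($k<N$ or $l<N$) contribute, and for those only the part of $K\times L$ near the $R$-attaining set matters.

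First I would fix $\eta>0$ and split $K\times L$ into the neighborhood $\mathcal N_\eta=\{(t,s)\in K\times L: r(t,s)>R-\eta\}$ and its complement. On the complement one has $r(t,s)\le R-\eta$, so the off-diagonal joint-tail bound for $\P\{X(t)\ge u,\,Y(s)\ge u\}$ (of order $\exp\{-u^2/(1+r(t,s))\}$ up to polynomial factors), together with the uniformly bounded conditional moments of the Hessian determinants supplied by $({\bf H}1)$--$({\bf H}2)$, shows that both the constrained and the unconstrained integrands are $o(\exp\{-u^2/(1+R)-\alpha u^2\})$ there; this is exactly the kind of estimate developed in Sections~\ref{sec:small} and~\ref{sec:diff}. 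Hence the whole contribution of the complement, and in particular its contribution to $\int\Delta_u$, is negligible.

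The essential point is the behavior on $\mathcal N_\eta$. Here I would use the unit-variance identity $\E\{X_\ell(t)X(t)\}=\tfrac12\partial_{t_\ell}\E\{X(t)^2\}=0$, which persists after conditioning on $\nabla X_{|K}(t)=0$ because $X(t)$ is uncorrelated with its whole gradient. Regressing $X_\ell(t)$ on the conditioning vector and on the values $(X(t),Y(s))=(x,y)$, its conditional mean is therefore driven by the cross-correlation $\E\{X_\ell(t)Y(s)\}=\frac{\partial r}{\partial t_\ell}(t,s)$. Since $R$ is the global maximum of $r$ and $\ell\notin\sigma(K)$ labels a direction normal to the face $K$, maximality forces $\varepsilon^*_\ell\frac{\partial r}{\partial t_\ell}(t,s)\ge 0$ at the $R$-attaining points, and \eqref{Eq:boundary} upgrades this to a strict bound, which by continuity and compactness is uniform, $\varepsilon^*_\ell\frac{\partial r}{\partial t_\ell}(t,s)\ge c_0>0$ on $\mathcal N_\eta$ after shrinking $\eta$. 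Consequently, on $\{X(t),Y(s)\ge u\}$ the conditional mean of $\varepsilon^*_\ell X_\ell(t)$ is at least $c_1u+O(1)$ with $c_1>0$, and Gaussian concentration gives $\P\{\varepsilon^*_\ell X_\ell(t)<0\mid X(t)=x,Y(s)=y,\ \nabla X_{|K}=\nabla Y_{|L}=0\}\le \exp\{-\alpha_0 u^2\}$ uniformly for $x,y\ge u$, with $\alpha_0>0$; the same holds for each $\ell'\notin\sigma(L)$ via $\frac{\partial r}{\partial s_{\ell'}}$. To assemble these bounds into $\int_{\mathcal N_\eta}\Delta_u$ I would condition on $(X(t),Y(s))=(x,y)$ before applying concentration, so the density $p_{X(t),Y(s)\mid\nabla X_{|K}=\nabla Y_{|L}=0}(x,y)$ carries the full factor $\exp\{-u^2/(1+R)\}$ upon integrating over $x,y\ge u$, while the inner conditional expectation of $|\det\nabla^2 X_{|K}\det\nabla^2 Y_{|L}|$ times the wrong-sign indicator is bounded, by Cauchy--Schwarz applied \emph{within} this inner conditioning, by a polynomial in $(x,y)$ times $\exp\{-\alpha_0 u^2/2\}$. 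Multiplying and integrating yields a contribution of order $\exp\{-u^2/(1+R)-\alpha u^2\}$, and summing over the finitely many faces gives the theorem.

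I expect the main obstacle to lie in making the conditional-mean sign and the concentration bound uniform over $\mathcal N_\eta$ while retaining the full tail rate $\exp\{-u^2/(1+R)\}$: one must perform the joint Gaussian regression of $X_\ell(t)$ and of the Hessian entries on the entire conditioning vector $(\nabla X_{|K}(t),\nabla Y_{|L}(s),X(t),Y(s))$, verify via the non-degeneracy in $({\bf H}2)$ that the coefficients are bounded and continuous, and—crucially—separate the \emph{signed} determinant factors from the rare wrong-sign event without applying Cauchy--Schwarz to the joint tail itself, which would halve the exponent and destroy the estimate. Conditioning on the values $(x,y)$ first, as above, is precisely what avoids this loss; the residual bounds are then of the same type as those already established in Sections~\ref{sec:small} and~\ref{sec:diff}.
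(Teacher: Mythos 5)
Your overall strategy differs from the paper's: the paper does not deduce Theorem \ref{Thm:MEC approximation je2} from Theorem \ref{Thm:MEC approximation je}, but instead reruns the upper/lower bound scheme of Section \ref{sec:sketch} with the local-maxima counts $M_u(\cdot)$ in place of the extended-outward counts $M_u^E(\cdot)$ (Remark \ref{remark:M_u}), controls the adjacent-face cross terms for $M_u(\cdot)$ via Lemma \ref{Lem:cross terms je2} --- which is exactly where \eqref{Eq:boundary} enters, through $\mathcal{M}_0=\emptyset$ --- and then identifies $\sum\E\{M_u(X,K)M_u(Y,L)\}$ with the stated integral via Proposition \ref{Prop:simlify the high moment je2}. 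Your plan (invoke Theorem \ref{Thm:MEC approximation je}, which applies since \eqref{Eq:boundary} implies $({\bf H}3)$, and show the indicator gap integrates to a super-exponentially small quantity) is legitimate in outline, and your sign analysis showing $\varepsilon^*_\ell\,\frac{\partial r}{\partial t_\ell}(t,s)\ge c_0>0$ on a neighborhood of the $R$-attaining set is correct.

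There is, however, a genuine error in your central estimate. Near an $R$-attaining point the conditional mean of $X_\ell(t)$ given $X(t)=x$, $Y(s)=y$, $\nabla X_{|K}(t)=\nabla Y_{|L}(s)=0$ equals, to leading order, $\frac{\partial r}{\partial t_\ell}(t,s)\,\frac{y-Rx}{1-R^2}$: the covariance of $X_\ell(t)$ with $X(t)$ vanishes by unit variance, so the regression is driven entirely by the cross-covariance with $Y(s)$, and the resulting coefficients of $x$ and $y$ have opposite signs. The factor $y-Rx$ is \emph{not} bounded below by a positive multiple of $u$ on the whole quadrant $x,y\ge u$; it is negative whenever $x>y/R$ (e.g.\ $y=u$, $x=2u/R$). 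Hence your claim that $\P\{\varepsilon^*_\ell X_\ell(t)<0\mid X(t)=x,Y(s)=y,\dots\}\le e^{-\alpha_0u^2}$ ``uniformly for $x,y\ge u$'' is false; in the region $x\gg y$ that conditional probability is close to $1/2$, and the claimed bound on the indicator gap does not follow. The argument can be repaired, but only by first decomposing $[u,\infty)^2$ into the sector $(\ep_0+R)x<y<(\ep_0+R)^{-1}x$ --- where $x-Ry$ and $y-Rx$ are both $\ge\ep_0 u$ and your concentration bound is valid --- and its complement, where $[X(t)+Y(s)]/2\ge[1+(\ep_0+R)^{-1}]u/2>u$ forces the Gaussian density itself to be super-exponentially small. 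This is precisely the decomposition \eqref{Eq:A} used in the proof of Proposition \ref{Prop:simlify the high moment je}; without it the key inequality in your proof fails, and you have misidentified the obstacle as one of uniformity in $(t,s)$ rather than in $(x,y)$.
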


The following result is the asymptotic approximation for the special case when the correlation attains its maximum $R$ only at a unique point.

\begin{theorem}\label{Thm:MEC approximation} Let $\{(X(t),Y(s)): t\in T, s\in S\}$ be an $\R^2$-valued, centered, unit-variance Gaussian 
vector field satisfying $({\bf H}1)$, $({\bf H}2)$ and $({\bf H}3)$. Suppose that the correlation attains its maximum $R$ only at a single point 
$(t^*, s^*)\in K\times L$, where $K\in \partial_k T$ and $L\in \partial_l S$ with $k,l\ge 0$. Then there exists a constant $\alpha>0$ such that 
as $u\to \infty$,
	\begin{equation*}
		\begin{split}
			&\quad \P \left\{\sup_{t\in T} X(t) \geq u, \sup_{s\in S} Y(s) \geq u \right\}\\
			&=\sum_{J}\sum_{F}(-1)^{{\rm dim}(J)+{\rm dim}(F)}\int_J\int_F dtds\, p_{\nabla X_{|J}(t), \nabla Y_{|F}(s)}(0,0) \\
			&\quad \times\E\big\{{\rm det}\nabla^2 X_{|J}(t){\rm det}\nabla^2 Y_{|F}(s)\mathbbm{1}_{\{X(t)\geq u, \ \varepsilon^*_\ell X_\ell(t) \geq 0 \ {\rm for \ all}\ \ell\in \mathcal{I}^R_X(t^*,s^*)\setminus \sigma(J)\}}\\
			&\quad \times \mathbbm{1}_{\{Y(s)\geq u, \ \varepsilon^*_\ell Y_\ell(s) \geq 0 \ {\rm for \ all}\ \ell\in \mathcal{I}^R_Y(t^*,s^*)\setminus \sigma(F)\}} \big| \nabla X_{|J}(t)=\nabla Y_{|F}(s)=0\big\}\\
			&\quad +o\left( \exp \left\{ -\frac{u^2}{1+R} -\alpha u^2 \right\}\right)
		\end{split}
	\end{equation*}
where the sums are taken over all faces $J$ of $T$ such that $t^*\in \bar{J}$ and $\sigma(J)\subset \mathcal{I}^R_X(t^*,s^*)$, 
and all faces $F$ of $S$ such that $s^*\in \bar{F}$ and $\sigma(F)\subset \mathcal{I}^R_Y(t^*,s^*)$.
\end{theorem}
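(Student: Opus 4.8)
The plan is to derive Theorem \ref{Thm:MEC approximation} as a specialization of Theorem \ref{Thm:MEC approximation je}, so the super-exponentially small error $o(\exp\{-u^2/(1+R)-\alpha u^2\})$ is inherited directly and the only real work is showing that, among the $(N+1)^2$ products of boundary faces appearing in the sum in \eqref{eq:EEC}, all terms except those whose closure contains the unique maximizer $(t^*,s^*)$ contribute negligibly. First I would recall the intuition emphasized in the introduction (via $\mathcal{M}_0$, $\mathcal{M}_1$, $\mathcal{M}_2$): each conditional-expectation integrand in \eqref{eq:EEC} is, up to polynomial factors, governed by the joint tail of the pair $(X(t),Y(s))$ given the gradient conditions, whose leading exponential rate is $\exp\{-u^2/(1+r(t,s))\}$. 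Since $r(t,s)\le R$ everywhere with equality only at $(t^*,s^*)$, any face product $K\times L$ whose closure misses $(t^*,s^*)$ has $\sup_{(t,s)\in \bar K\times\bar L} r(t,s) =: R_{K,L} < R$ strictly; a compactness argument over the finitely many such face pairs yields a uniform gap, so those integrals are $O(\exp\{-u^2/(1+R_{K,L})\}) = o(\exp\{-u^2/(1+R)-\alpha u^2\})$ for a suitable $\alpha>0$.

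Next I would identify exactly which faces survive. A face product $K\times L$ can carry the dominant contribution only if $(t^*,s^*)\in \bar K\times\bar F$, i.e. $t^*\in\bar K$ and $s^*\in\bar L$. Write $t^*\in J_0$ and $s^*\in F_0$ for the (unique) faces containing $(t^*,s^*)$ in their relative interiors, with $\sigma(J_0),\sigma(F_0)$ the corresponding free-coordinate index sets. The key observation is that along a face $K$ with $t^*\in\bar K$ but $\sigma(K)\not\subset \mathcal{I}^R_X(t^*,s^*)$, there is some free direction $\ell\in\sigma(K)$ with $\frac{\partial r}{\partial t_\ell}(t^*,s^*)=\E\{X_\ell(t^*)Y(s^*)\}\neq 0$; moving into that direction strictly decreases $r$ off its maximum, forcing the integrand's exponential rate below $R$ and again rendering the term negligible. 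Hence only faces $J$ with $t^*\in\bar J$ and $\sigma(J)\subset\mathcal{I}^R_X(t^*,s^*)$, and faces $F$ with $s^*\in\bar F$ and $\sigma(F)\subset\mathcal{I}^R_Y(t^*,s^*)$, can survive — precisely the index sets in the statement. This step uses the observation noted just before Proposition \ref{prop:H3} that $\sigma(K)\subset\mathcal{I}^R_X(t,s)$ whenever $r(t,s)=R$, so that for the surviving faces the gradient conditions in \eqref{eq:EEC} collapse: the constraints $\varepsilon^*_\ell X_\ell(t)\ge 0$ for $\ell\notin\sigma(J)$ need only be retained for those $\ell\in\mathcal{I}^R_X(t^*,s^*)\setminus\sigma(J)$, because for $\ell\notin\mathcal{I}^R_X(t^*,s^*)$ the correlation $\E\{X_\ell(t^*)Y(s^*)\}\neq 0$ makes the corresponding event either contribute at a strictly lower exponential rate or be automatically handled near the maximizer — this is exactly how the indicator over $\ell\notin\sigma(K)$ in \eqref{eq:EEC} is reduced to the indicator over $\ell\in\mathcal{I}^R_X(t^*,s^*)\setminus\sigma(J)$ in the theorem statement.

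With the surviving faces and reduced indicators in hand, I would assemble the claimed formula by collecting the terms of \eqref{eq:EEC} indexed by such $J$ and $F$, relabeling $k=\dim(J)$ and $l=\dim(F)$, and folding the discarded boundary-gradient constraints (those for $\ell\notin\mathcal{I}^R_X(t^*,s^*)$) into the error term via the strict-gap estimate from the first paragraph. The sign $(-1)^{\dim(J)+\dim(F)}$ and the density/determinant structure are carried over verbatim from \eqref{eq:EEC}. Condition $({\bf H}3)$ enters to guarantee, through Proposition \ref{prop:H3}, that the Hessian of $r$ on the relevant index set is negative semi-definite at $(t^*,s^*)$, which is what legitimizes the Laplace-type localization near the single maximizer and ensures the retained integrals genuinely capture the leading order rather than being killed by an unfavorable second-order behavior of $r$.

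The main obstacle I anticipate is the second paragraph: rigorously controlling the faces with $t^*\in\bar K$ but $\sigma(K)\not\subset\mathcal{I}^R_X(t^*,s^*)$. Unlike the faces whose closure entirely misses $(t^*,s^*)$ (handled by a clean uniform gap in $r$), these faces touch the maximizer, so $r$ attains $R$ on their boundary and the naive exponential rate is not bounded away from $R$ uniformly. The delicate point is to show that the \emph{extended outward} gradient constraint $\varepsilon^*_\ell X_\ell(t)\ge 0$ together with the nonvanishing derivative $\frac{\partial r}{\partial t_\ell}(t^*,s^*)\neq 0$ forces the effective contribution down — essentially a boundary Laplace estimate where the linear term in the exponent, rather than a quadratic one, governs the decay and produces the requisite strict loss $\alpha u^2$. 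Making this quantitative will require a careful local expansion of the conditional joint-tail integrand near $(t^*,s^*)$ along the offending coordinate, which is where the bulk of the technical estimation lies.
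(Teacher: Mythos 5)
Your overall plan --- keep only the face products whose closures contain $(t^*,s^*)$ and whose free coordinates lie in $\mathcal{I}^R_X(t^*,s^*)$, resp.\ $\mathcal{I}^R_Y(t^*,s^*)$, and absorb everything else into the error --- matches the paper's, and your identification of the surviving faces is correct. But the two steps that actually carry the proof are left open, and the mechanism you sketch for the harder one is not the one that works. For a face $K$ with $t^*\in\bar K$ but $\sigma(K)\not\subset\mathcal{I}^R_X(t^*,s^*)$ you argue that $r$ decreases along the offending free direction, then (rightly) concede that this gives no uniform gap since $\sup_{\bar K\times\bar L}r=R$, and you propose to repair it with a ``boundary Laplace estimate'' driven by the one-sided constraint $\varepsilon_\ell^*X_\ell(t)\ge 0$. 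That cannot work as stated: for $\ell\in\sigma(K)$ there is no one-sided constraint --- the Kac--Rice integrand conditions on $X_\ell(t)=0$. And that conditioning is exactly the point: since $\E\{Y(s^*)X_\ell(t^*)\}=\frac{\partial r}{\partial t_\ell}(t^*,s^*)\neq 0$ for some $\ell\in\sigma(K)$, the set $\mathcal{M}_2$ of \eqref{Eq:M2} is empty for such a pair $(K,L)$, so ${\rm Var}\big([X(t)+Y(s)]/2\,\big|\,\nabla X_{|K}(t)=\nabla Y_{|L}(s)=0\big)<(1+R)/2$ uniformly on $\bar K\times\bar L$ by continuity and compactness (this is the mechanism of Lemma \ref{Lem:estimating conditional joint tail}), and the whole term $\E\{M_u(X,K)M_u(Y,L)\}$ is super-exponentially small with no boundary Laplace analysis at all. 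This conditional-variance-reduction argument is the missing idea.

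Second, your reduction of the indicators from $\ell\notin\sigma(J)$ to $\ell\in\mathcal{I}^R_X(t^*,s^*)\setminus\sigma(J)$ is asserted (``either \dots or automatically handled'') rather than proved, and proving it term by term from \eqref{eq:EEC} would require a sign computation you do not give: at the maximizer one has $\varepsilon_\ell^*\frac{\partial r}{\partial t_\ell}(t^*,s^*)>0$ for every $\ell\notin\mathcal{I}^R_X(t^*,s^*)\cup\sigma(J)$, so the dropped constraint $\varepsilon_\ell^*X_\ell(t)\ge 0$ holds with conditional probability tending to one given $Y(s)\ge u$ near $(t^*,s^*)$. The paper sidesteps this entirely: it defines modified counts $M_u^{E^*}$ imposing the outward conditions only for $\ell\in\mathcal{I}^R_X(t^*,s^*)\setminus\sigma(J)$, observes $M_u^E\le M_u^{E^*}\le M_u$ so the sandwich \eqref{Ineq:upperbound je}--\eqref{Ineq:lowerbound je} runs verbatim with $M_u^{E^*}$ on the surviving faces and $M_u$ elsewhere, and then applies Proposition \ref{Prop:simlify the high moment je} to $\E\{M_u^{E^*}(X,J)M_u^{E^*}(Y,F)\}$. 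You should either adopt that device or supply the sign argument; as written the proposal identifies the right answer but not a complete route to it.
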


\section{Plan of the proofs}\label{sec:sketch}

Note that, for a smooth real-valued function $f$, $\sup_{t\in T} f(t) \geq u$ if and only if there exists at least one extended 
outward local maximum above $u$ on some face of $T$. Thus, under conditions $({\bf H}1)$ and $({\bf H}2)$, the following 
relation holds for each $u\in \R$:
\begin{equation}\label{Eq:maxima-faces}
\begin{split}
&\quad \left\{\sup_{t\in T} X(t) \geq u, \sup_{s\in S} Y(s) \geq u \right\} \\
&= \bigcup_{k, l=0}^N \bigcup_{K\in \partial_k T, \, L\in \partial_l S}
\{M_u^E (X,K) \geq 1, M_u^E (Y,L) \geq 1\} \quad {\rm a.s.}
\end{split}
\end{equation}
Therefore, we obtain the following upper bound for the joint excursion probability:
\begin{equation}\label{Ineq:upperbound je}
\begin{split}
&\quad \P\left\{\sup_{t\in T} X(t) \geq u, \sup_{s\in S} Y(s) \geq u \right\}\\
&\leq \sum_{k,l=0}^N\sum_{K\in \partial_k T, \, L\in \partial_l S} \P\{M_u^E (X,K) \geq 1, M_u^E (Y,L) \geq 1\}\\
&\leq \sum_{k,l=0}^N\sum_{K\in \partial_k T, \, L\in \partial_l S} \E \{M_u^E (X,K) M_u^E (Y,L) \}.
\end{split}
\end{equation}
On the other hand, notice that
\begin{equation*}
\begin{split}
&\quad \E\{M_u^E (X,K)M_u^E (Y,L) \} - \P\{M_u^E (X,K) \geq 1, M_u^E (Y,L) \geq 1\} \\
&= \sum_{i,j=1}^\infty (ij-1)\P\{M_u^E (X,K)=i, M_u^E (Y,L)=j\}\\
& \leq \sum_{i,j=1}^\infty [i(i-1)j+j(j-1)i]\P\{M_u^E (X,K)=i, M_u^E (Y,L)=j\}\\
&= \E \{M_u^E (X,K)[M_u^E (X,K)-1]M_u^E (Y,L) \}  + \E \{ M_u^E (Y,L)[M_u^E (Y,L)-1]M_u^E (X,K) \}
\end{split}
\end{equation*}
and
\begin{equation*}
\begin{split}
&\quad\P\{M_u^E (X,K) \geq 1, M_u^E (Y,L) \geq 1, M_u^E (X,K') \geq 1, M_u^E (Y,L') \geq 1\}\\
&\le \P\{M_u^E (X,K) \geq 1, M_u^E (Y,L) \geq 1, M_u^E (Y,L') \geq 1\}\\
&\le \E\{M_u^E (X,K) M_u^E (Y,L) M_u^E (Y,L')\}.
\end{split}
\end{equation*}
Combining these two inequalities with \eqref{Eq:maxima-faces} and applying the Bonferroni inequality, we 
obtain the following lower bound for the joint excursion probability:
\begin{equation}\label{Ineq:lowerbound je}
\begin{split}
&\quad \P \left\{\sup_{t\in T} X(t) \geq u, \sup_{s\in S} Y(s) \geq u \right\} \\
&\geq \sum_{k,l=0}^N\sum_{K\in \partial_k T, L\in \partial_l S} \Big\{ \E \{M_u^E (X,K)M_u^E (Y,L) \}- \\
&\quad \E \{M_u^E (X,K)[M_u^E (X,K)-1] M_u^E (Y,L) \} - \E \{ M_u^E (Y,L)[M_u^E (Y,L)-1]M_u^E (X,K) \} \Big\}\\
& \quad - \sum_{k,k',l=0}^N\sum_{\substack {K\in \partial_k T, L\in \partial_l S\\ K'\in \partial_{k'} T, K\neq K'}} \E\{M_u^E (X,K) M_u^E (X,K') M_u^E (Y,L)\}\\
&\quad- C_N\sum_{k,l,l'=0}^N\sum_{\substack{K\in \partial_k T, L\in \partial_l S\\ L'\in \partial_{l'} S, L\neq L'}} \E\{M_u^E (X,K) M_u^E (Y,L) M_u^E (Y,L')\},
\end{split}
\end{equation}
where $C_N$ is a constant depending only on $N$. 
\begin{remark}\label{remark:M_u}
	Note that, following the same arguments above, we have that the expectations on the number of extended outward maxima $M_u^E(\cdot)$ in both \eqref{Ineq:upperbound je} and \eqref{Ineq:lowerbound je} can be replaced by the expectations on the number of local maxima $M_u(\cdot)$.
\end{remark}

We call a function $h(u)$ \emph{super-exponentially small} [when compared with the joint excursion probability $\P \{\sup_{t\in T} X(t) \geq u, \sup_{s\in S} Y(s) \geq u \}$], if there exists a constant $\alpha >0$ such that $h(u) = o(e^{-\alpha u^2 - u^2/{(1+R)}})$ as $u \to \infty$. The main idea for proving the EEC approximation Theorem \ref{Thm:MEC approximation je} consists of the following two steps: (i) show that, except for the upper bound in (\ref{Ineq:upperbound je}), all terms in the lower bound in (\ref{Ineq:lowerbound je}) are super-exponentially small; and (ii) prove that the difference between the upper bound in (\ref{Ineq:upperbound je}) and $\E\{\chi(A_u)\}$ is also super-exponentially small. The ideas for proving Theorems \ref{Thm:MEC approximation je2} and \ref{Thm:MEC approximation} are similar.

\section{Estimation of super-exponentially small terms in the lower bound}\label{sec:small}
\subsection{Auxiliary results on multivariate Gaussian tails}
\begin{lemma}\label{Lem:estimating joint tail for 3 rvs} Let $\{(\xi_1(x_1), \xi_2(x_2), \xi_3(x_3)): (x_1, x_2, x_3)\in D_1\times D_2 \times D_3\}$
	be an $\R^3$-valued, $C^2$, centered, unit-variance, non-degenerate Gaussian vector field, where $D_i$, $i=1,2,3$, 
	are compact sets in $\R^N$. Let
	$R_{ij}=\sup_{x_i\in D_i, x_j\in D_j}\E \{\xi_i(x_i)\xi_j(x_j)\}$,
where $i,j=1,2,3$ and $i<j$. If $R_{12}\le \min\{R_{13}, R_{23}\}$, then there exists a constant $\alpha>0$ such that for  every integer  $m\ge 0$, 
as $u\to \infty$,
	\begin{equation}\label{Eq:triple tail}
	\begin{split}
	&\sup_{x_1\in D_1, x_2\in D_2, x_3\in D_3} \E\{|\xi_1(x_1)\xi_2(x_2)\xi_3(x_3)|^{m} \mathbbm{1}_{\{\xi_1(x_1) \geq u, \xi_2(x_2) \geq u, \xi_3(x_3) \geq u\}} \}\\
	&\quad = o\bigg(\exp\bigg\{-\alpha u^2-\frac{u^2}{1+R_{12}} \bigg\}\bigg).
	\end{split}
	\end{equation}
	
\end{lemma}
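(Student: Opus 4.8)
The plan is to strip off the polynomial weight by H\"older's inequality and then reduce the resulting probability to a one-dimensional Gaussian tail through an optimally chosen linear functional. Write $A=\{\xi_1(x_1)\ge u,\ \xi_2(x_2)\ge u,\ \xi_3(x_3)\ge u\}$. For conjugate exponents $p,q>1$,
\[
\E\{|\xi_1(x_1)\xi_2(x_2)\xi_3(x_3)|^{m}\mathbbm{1}_A\}\le\big(\E|\xi_1(x_1)\xi_2(x_2)\xi_3(x_3)|^{mp}\big)^{1/p}\,\P(A)^{1/q}.
\]
Since the field is unit-variance, $C^2$ and non-degenerate on the compact set $D_1\times D_2\times D_3$, the first factor is bounded uniformly in $(x_1,x_2,x_3)$. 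Hence it suffices to prove, uniformly over the index points, a bound of the form $\P(A)\le\exp\{-u^2/(1+R_{12})-2\alpha u^2\}$ with $\alpha>0$; choosing $q$ close enough to $1$ then transfers this exponent to the left-hand side with negligible loss and yields the asserted super-exponential decay.

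For fixed $(x_1,x_2,x_3)$ set $\rho_{ij}=\E\{\xi_i(x_i)\xi_j(x_j)\}$ and let $\Sigma=(\rho_{ij})_{i,j=1}^{3}$ be the unit-diagonal correlation matrix. For any $a=(a_1,a_2,a_3)$ with $a_i\ge0$ and $a_1+a_2+a_3=1$, on $A$ we have $\sum_i a_i\xi_i(x_i)\ge u$, and $\sum_i a_i\xi_i(x_i)$ is centered Gaussian with variance $a^{T}\Sigma a$, so
\[
\P(A)\le\P\Big\{\sum_i a_i\xi_i(x_i)\ge u\Big\}\le\exp\Big\{-\frac{u^2}{2\,a^{T}\Sigma a}\Big\}.
\]
Optimising over the simplex reduces everything to the purely deterministic estimate $\sup_{(x_1,x_2,x_3)}\min_{a\ge0,\ \mathbf 1^{T}a=1}a^{T}\Sigma a\le\frac{1+R_{12}}{2}-\alpha'$ for some $\alpha'>0$.

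This variational estimate is the heart of the matter. The choice $a=(\tfrac12,\tfrac12,0)$ gives only the benchmark $a^{T}\Sigma a=\tfrac{1+\rho_{12}}{2}\le\tfrac{1+R_{12}}{2}$, which merely recovers the exponent $u^2/(1+R_{12})$; the improvement must come from placing a little mass on the third coordinate. Perturbing to $a=(\tfrac12-\varepsilon,\tfrac12-\varepsilon,2\varepsilon)$ gives
\[
a^{T}\Sigma a=\frac{1+\rho_{12}}{2}-2\varepsilon\big[(1+\rho_{12})-(\rho_{13}+\rho_{23})\big]+O(\varepsilon^2),
\]
so a strict decrease occurs exactly when $\rho_{13}+\rho_{23}<1+\rho_{12}$. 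I would split into two cases according to a small fixed $\delta>0$. If $\rho_{12}\le R_{12}-\delta$, then the benchmark $\tfrac{1+\rho_{12}}{2}$ already undercuts $\tfrac{1+R_{12}}{2}$ by $\delta/2$. If $\rho_{12}>R_{12}-\delta$, then using the hypothesis, which makes $(1,2)$ the dominant pair so that $R_{13},R_{23}\le R_{12}$, one has $\rho_{13}+\rho_{23}\le R_{13}+R_{23}\le 2R_{12}$ and therefore
\[
(1+\rho_{12})-(\rho_{13}+\rho_{23})\ge 1+(R_{12}-\delta)-2R_{12}=1-R_{12}-\delta>0
\]
once $\delta<1-R_{12}$; a small fixed $\varepsilon$ then produces a strict decrease of definite size. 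Here it is essential that $R_{12}<1$, which holds because non-degeneracy of the Gaussian vector over the compact domain forces every maximal correlation to be strictly below $1$.

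Finally, continuity of $\Sigma(\cdot)$ and compactness of $D_1\times D_2\times D_3$ promote these pointwise strict inequalities to the uniform gap $\alpha'>0$, completing the estimate. I expect the uniform gap to be the main obstacle: a priori $\min_a a^{T}\Sigma a$ can equal the benchmark $\tfrac{1+\rho_{12}}{2}$ at points where the third field is nearly determined by the first two, that is where $\rho_{13}+\rho_{23}\ge 1+\rho_{12}$; there the triple tail is of the same order as the pairwise $(1,2)$ tail and no super-exponential gain is possible. Excluding this degenerate configuration uniformly is precisely where the correlation hypothesis relating $R_{12}$ to $R_{13},R_{23}$ and the non-degeneracy assumption are both used; the remaining ingredients---the H\"older reduction, the one-dimensional tail bound, and the compactness argument---are routine.
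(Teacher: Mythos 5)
Your overall strategy --- H\"older's inequality to strip the polynomial weight, then bounding $\P(A)$ by the tail of a weighted average $\sum_i a_i\xi_i(x_i)$ and reducing everything to the variational estimate $\min_{a}a^{T}\Sigma a\le\tfrac{1+R_{12}}{2}-\alpha'$ --- is sound, and your identification of the critical condition $\rho_{13}+\rho_{23}<1+\rho_{12}$ is exactly the right mechanism. The paper's own proof takes a cruder route: it fixes $a=(\tfrac12,\tfrac12,0)$ and then cites a bivariate tail estimate for the pair $\big((\xi_1+\xi_2)/2,\,\xi_3\big)$ asserting a super-exponential gain over $\exp\{-u^2/(1+R_{12})\}$ for any non-degenerate pair. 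That cited estimate is only valid when $\mathrm{Cov}\big((\xi_1+\xi_2)/2,\xi_3\big)<\mathrm{Var}\big((\xi_1+\xi_2)/2\big)$, i.e.\ exactly your condition $\rho_{13}+\rho_{23}<1+\rho_{12}$, so the two arguments hinge on the same inequality; yours makes the dependence explicit where the paper's hides it.

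The gap is that you have silently reversed the hypothesis. The lemma assumes $R_{12}\le\min\{R_{13},R_{23}\}$, i.e.\ the $(1,2)$ pair has the \emph{smallest} maximal correlation, whereas you write that the hypothesis ``makes $(1,2)$ the dominant pair so that $R_{13},R_{23}\le R_{12}$'' and then use $\rho_{13}+\rho_{23}\le R_{13}+R_{23}\le 2R_{12}$. Under the hypothesis as actually printed this chain fails, and the degenerate configuration you flag at the end is not excluded: take each $D_i$ a single point and correlations $\rho_{12}=0$, $\rho_{13}=\rho_{23}=0.7$ (a valid positive-definite correlation matrix, determinant $0.02$). Then $R_{12}=0\le\min\{R_{13},R_{23}\}$, yet $\rho_{13}+\rho_{23}=1.4>1=1+\rho_{12}$, the constraint $\xi_3\ge u$ is asymptotically inactive given $\xi_1,\xi_2\ge u$, and $\P(A)\sim\P\{\xi_1\ge u,\xi_2\ge u\}\asymp u^{-2}e^{-u^2}$, which is not $o(e^{-u^2-\alpha u^2})$. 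So your argument proves a corrected version of the lemma (with the inequality between $R_{12}$ and the other two reversed, which is the version the application in Lemma \ref{Lem:cross terms disjoint sets} actually needs when one takes for the $(1,2)$ pair one realizing the cross-correlation bounded by $R$), but not the lemma as stated; the same example shows the printed hypothesis cannot suffice, so the defect lies in the statement and in the paper's one-line appeal to the bivariate tail bound rather than in your mechanism. As a submitted proof of the printed statement, however, the step from the hypothesis to $\rho_{13}+\rho_{23}\le 2R_{12}$ is false and must be repaired by correcting the hypothesis explicitly.
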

\begin{proof} Due to the exponential decay of Gaussian tails, it suffices to prove that there exists $\alpha'>0$ such that as $u\to \infty$,
	\begin{equation}\label{Eq:triple tail 2}
		\begin{split}
			&\sup_{x_1\in D_1, x_2\in D_2, x_3\in D_3} \P\{\xi_1(x_1) \geq u, \xi_2(x_2) \geq u, \xi_3(x_3) \geq u\} \} 
			= o\left(\exp\left\{-\alpha' u^2-\frac{u^2}{1+R_{12}} \right\}\right).
		\end{split}
	\end{equation}
	Note that,
	\[
	\P\{\xi_1(x_1) \geq u, \xi_2(x_2) \geq u, \xi_3(x_3) \geq u\} \} \le \P\{(\xi_1(x_1) +\xi_2(x_2))/2 \geq u, \xi_3(x_3) \geq u\} \},
	\]
	where $(\xi_1(x_1) +\xi_2(x_2))/2$ is a centered Gaussian variable with variance bounded by
	\[
	\sup_{x_1\in D_1, x_2\in D_2}{\rm Var}((\xi_1(x_1) +\xi_2(x_2))/2)= \frac{1+R_{12}}{2}.
	\]
It is known that (see for example \citet{Tong1990}), for a centered nondegenerate bivariate Gaussian vector $(Z_1, Z_2)$ with 
${\rm Var}(Z_1) = \sigma^2$, there exists $\alpha'>0$ such that as $u\to \infty$,
\[
\P\{Z_1 \geq u, Z_2 \geq u\} = o\left(\exp\left\{-\alpha' u^2-\frac{u^2}{2\sigma^2} \right\}\right).
\]
Combining these yields \eqref{Eq:triple tail 2} and hence \eqref{Eq:triple tail}.
\end{proof}

\begin{lemma}\label{Lem:estimating conditional joint tail} Let $\big\{(\xi_1(x_1),  \ldots, \xi_n(x_n): x_i\in D_i, i=1, \ldots, n\big\}$
	be an $\R^n$-valued, $C^2$, centered, unit-variance, non-degenerate Gaussian vector vector, where $D_1$, \ldots, $D_n$ ($n\ge 3$) are compact sets in $\R^N$. Let 
	$R_{12}=\sup_{x_1\in D_1, x_2\in D_2}\E\{\xi_1(x_1) \xi_2(x_2)\}$.
	If
	\begin{equation}\label{Condition:coditional joint tail}
	\begin{split}
	\{&(x_1, \ldots, x_n)\in D_1\times \cdots \times D_n:  \\
	&\quad \E\{\xi_1(x_1) \xi_2(x_2)\}=R_{12}, \ \E \{(\xi_1(x_1)+\xi_2(x_2))\xi_i(x_i)\}= 0, \ \forall i=3,\ldots, n\} = \emptyset,
	\end{split}
	\end{equation}
	then there exists $\alpha>0$ such that as $u\to \infty$,
	\begin{equation*}
	\begin{split}
	&\sup_{x_i\in D_i, i=1,\ldots, n } \E\{|\xi_1(x_1)\xi_2(x_2)|^{m} \mathbbm{1}_{\{\xi_1(x_1) \geq u, \xi_2(x_2) \geq u\}}| \xi_3(x_3)=\cdots =\xi_n(x_n)=0 \}\\
	&\quad = o\bigg(\exp\bigg\{-\alpha u^2-\frac{u^2}{1+R_{12}} \bigg\}\bigg),
	\end{split}
	\end{equation*}
where $m\ge 0$ is any fixed integer.
\end{lemma}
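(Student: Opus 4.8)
The plan is to reduce the two–sided conditional tail to a one–dimensional Gaussian tail for the average $W=(\xi_1(x_1)+\xi_2(x_2))/2$, exactly as in the proof of Lemma \ref{Lem:estimating joint tail for 3 rvs}, but now under the conditioning event $\mathcal C=\{\xi_3(x_3)=\cdots=\xi_n(x_n)=0\}$. First I would observe that for $u>0$ the arithmetic--geometric mean inequality gives $\xi_1(x_1)\xi_2(x_2)\le W^2$ on the event $\{\xi_1(x_1)\ge u,\ \xi_2(x_2)\ge u\}$, while that event is contained in $\{W\ge u\}$. Since both factors are positive there, $|\xi_1(x_1)\xi_2(x_2)|^m\le W^{2m}$, so it suffices to bound $\E\{W^{2m}\mathbbm{1}_{\{W\ge u\}}\mid \mathcal C\}$ uniformly in $(x_1,\dots,x_n)$.

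Conditionally on $\mathcal C$ the variable $W$ is centered Gaussian (all conditional means vanish because the conditioning values are zero), with conditional variance $\sigma_W^2(x)=\mathrm{Var}(W)-c^{T}\Sigma^{-1}c$, where $c=(\E\{W\xi_i(x_i)\})_{i=3}^{n}$ and $\Sigma$ is the covariance matrix of $(\xi_3(x_3),\dots,\xi_n(x_n))$; non-degeneracy of the field makes $\Sigma$ positive definite (a principal submatrix of a positive definite matrix), so both the conditioning and this formula are valid. Because $\mathrm{Var}(W)=\tfrac12(1+\E\{\xi_1(x_1)\xi_2(x_2)\})\le \tfrac12(1+R_{12})$ and $c^{T}\Sigma^{-1}c\ge 0$, one has $\sigma_W^2(x)\le \tfrac12(1+R_{12})$ throughout. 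The crucial point is the characterization of equality: $\sigma_W^2(x)=\tfrac12(1+R_{12})$ forces both $c^{T}\Sigma^{-1}c=0$, i.e. $c=0$ (equivalently $\E\{(\xi_1(x_1)+\xi_2(x_2))\xi_i(x_i)\}=0$ for all $i\ge 3$), and $\E\{\xi_1(x_1)\xi_2(x_2)\}=R_{12}$ --- which is precisely membership in the set required to be empty in \eqref{Condition:coditional joint tail}.

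Hence $\sigma_W^2(x)<\tfrac12(1+R_{12})$ at every point. To turn this pointwise strict inequality into a uniform gap I would invoke continuity and compactness: the entries of $\Sigma$, of $c$, and $\mathrm{Var}(W)$ are continuous in $x$ (the field is $C^2$, so its covariances are continuous), and $\Sigma^{-1}$ is continuous by non-degeneracy, so $x\mapsto\sigma_W^2(x)$ is continuous on the compact set $D_1\times\cdots\times D_n$; therefore $\sigma_0^2:=\sup_x\sigma_W^2(x)<\tfrac12(1+R_{12})$. A standard Gaussian tail estimate then yields, uniformly in $x$, $\E\{W^{2m}\mathbbm{1}_{\{W\ge u\}}\mid\mathcal C\}\le P(u)\,e^{-u^2/(2\sigma_0^2)}$ for some polynomial $P$ (the conditional standard deviation $\sigma_W(x)\le\sigma_0$ is bounded, so the Mills-ratio bound is uniform; the case $\sigma_W(x)=0$ is trivial). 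Since $2\sigma_0^2<1+R_{12}$, writing $2\alpha=\tfrac{1}{2\sigma_0^2}-\tfrac{1}{1+R_{12}}>0$ gives $P(u)e^{-u^2/(2\sigma_0^2)}=P(u)e^{-\alpha u^2}\,e^{-\alpha u^2-u^2/(1+R_{12})}=o\big(e^{-\alpha u^2-u^2/(1+R_{12})}\big)$, which is the claim.

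The main obstacle I expect is the uniform gap in the third paragraph rather than any single estimate: one must both pin down the exact equality case for the conditional variance (the two simultaneous degeneracies, in $R_{12}$ and in the cross-correlations) so that it matches \eqref{Condition:coditional joint tail}, and then rely on continuity of the conditional variance --- which is where non-degeneracy of the field is genuinely used, guaranteeing $\Sigma$ is invertible with continuously varying inverse --- together with compactness of the domain to promote the pointwise strict inequality to the supremum bound $\sigma_0^2<\tfrac12(1+R_{12})$. Once that gap is secured, the reduction via the average $W$ and the one--dimensional tail bound are routine.
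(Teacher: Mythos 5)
Your proposal is correct and follows essentially the same route as the paper: bound the integrand by $W^{2m}\mathbbm{1}_{\{W\ge u\}}$ with $W=(\xi_1(x_1)+\xi_2(x_2))/2$, note that the conditional variance of $W$ is at most $(1+R_{12})/2$ with equality exactly on the set excluded by \eqref{Condition:coditional joint tail}, and promote the pointwise strict inequality to a uniform gap by continuity and compactness. Your write-up is somewhat more explicit than the paper's (spelling out the conditional-variance formula $\mathrm{Var}(W)-c^{T}\Sigma^{-1}c$ and the two simultaneous degeneracies in the equality case), but the argument is the same.
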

\begin{proof} Let $\ol{\xi}(x_1,x_2)=[\xi_1(x_1) + \xi_2(x_2)]/2$. Then
	\begin{equation*}
	\begin{split}
	\E&\{|\xi_1(x_1)\xi_2(x_2)|^{m} \mathbbm{1}_{\{\xi_1(x_1) \geq u, \xi_2(x_2) \geq u\}}| \xi_3(x_3)=\cdots =\xi_n(x_n)=0 \} \\
	&\leq \E\{\ol{\xi}(x_1,x_2)^{2m} \mathbbm{1}_{\{\ol{\xi}(x_1,x_2) \geq u\}}| \xi_3(x_3)=\cdots =\xi_n(x_n)=0 \}.
	\end{split}
	\end{equation*}
	Note that $(\ol{\xi}(x_1,x_2) | \xi_3(x_3)=\cdots =\xi_n(x_n)=0)$ is a centered Gaussian variable with variance
	\begin{equation*}
	\begin{split}
	{\rm Var}(\ol{\xi}(x_1,x_2)|\xi_3(x_3)=\cdots =\xi_n(x_n)=0) &\le {\rm Var}(\ol{\xi}(x_1,x_2))= \frac{1+\E\{\xi_1(x_1) \xi_2(x_2)\}}{2}\le \frac{1+R_{12}}{2},
	\end{split}
	\end{equation*}
	where the first inequality becomes equality if and only if $\ol{\xi}(x_1,x_2)$ is independent of each $\xi_i(x_i)$, $i \ge 3$. 
	The desired result follows from the continuity of the conditional variance in $x_i$ and the compactness of $D_i$, $i=1,\ldots, n$.
\end{proof}

\subsection{Non-adjacent faces}
For two sets $D, D' \subset \R^N$, let $d(D,D')=\inf\{\|t-t'\|: t\in D, t'\in D'\}$ denote their distance. The following 
result shows that the last two sums involving the joint moment of two non-adjacent faces in \eqref{Ineq:lowerbound je} 
are super-exponentially small.
\begin{lemma}\label{Lem:cross terms disjoint sets} Let $\{(X(t),Y(s)): t\in T, s\in S\}$ be an $\R^2$-valued, centered, 
unit-variance Gaussian vector field satisfying $({\bf H}1)$ and $({\bf H}2)$. Then there exists $\alpha>0$ such that as $u\to \infty$,
	\begin{equation}\label{Eq:crossing terms disjoint sets}
	\begin{split}
	\E\{M_u (X,K) M_u (X,K') M_u (Y,L)\} &= o\Big(\exp \Big\{ -\frac{u^2}{1+R} -\alpha u^2 \Big\}\Big),\\
	\E\{M_u (X,K) M_u (Y,L) M_u (Y,L')\} &= o\Big(\exp \Big\{ -\frac{u^2}{1+R} -\alpha u^2 \Big\}\Big),
	\end{split}
	\end{equation}
	where $K$ and $K'$ are different faces of $T$ with $d(K,K')>0$, $L$ and $L'$ are different faces of $S$ with $d(L,L')>0$.
\end{lemma}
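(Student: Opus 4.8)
The plan is to bound each triple moment by a Kac--Rice integral and then to show that the integrand is super-exponentially small uniformly on the domain of integration, with the gain coming entirely from the non-adjacency $d(K,K')>0$. I treat the first estimate; the second follows verbatim after exchanging the roles of $X$ and $Y$ (so that $s,s'$ run over the disjoint faces $L,L'$ and $t$ over $K$). First I would discard the index conditions, bounding $M_u (X,K) M_u (X,K') M_u (Y,L)$ by the product of the numbers of critical points of $X_{|K}$, $X_{|K'}$, $Y_{|L}$ with value $\ge u$, and apply the Kac--Rice metatheorem to the latter. Since $d(K,K')>0$ forces $\overline K\cap\overline{K'}=\emptyset$ (a common closure point would make $d(K,K')=0$), we always have $t\neq t'$ on $\overline K\times\overline{K'}$, so $({\bf H}2)$ guarantees that the joint law of the three gradients is non-degenerate and, together with $({\bf H}1)$ and compactness, that the density $p_{\nabla X_{|K}(t),\nabla X_{|K'}(t'),\nabla Y_{|L}(s)}(0,0,0)$ is continuous and bounded on $\overline K\times\overline{K'}\times\overline L$. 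This expresses $\E\{M_u (X,K) M_u (X,K') M_u (Y,L)\}$ as the integral over $\overline K\times\overline{K'}\times\overline L$ of that bounded density times
\[
\E\big\{|\det\nabla^2X_{|K}(t)\,\det\nabla^2X_{|K'}(t')\,\det\nabla^2Y_{|L}(s)|\,\mathbbm 1_{\{X(t)\ge u,\,X(t')\ge u,\,Y(s)\ge u\}}\,\big|\,\nabla X_{|K}(t)=\nabla X_{|K'}(t')=\nabla Y_{|L}(s)=0\big\}.
\]

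Next I would strip the determinants. Given that the three gradients vanish and $(X(t),X(t'),Y(s))=(x,x',y)$, the three Hessians are jointly Gaussian with conditional means linear in $(x,x',y)$ and conditional covariances bounded uniformly on the compact domain (by $({\bf H}1)$--$({\bf H}2)$); hence the conditional expectation of the product of absolute determinants is dominated by a fixed polynomial $P(|x|,|x'|,|y|)$. It therefore suffices to prove that, uniformly on $\overline K\times\overline{K'}\times\overline L$,
\[
\E\big\{P(|X(t)|,|X(t')|,|Y(s)|)\,\mathbbm 1_{\{X(t)\ge u,X(t')\ge u,Y(s)\ge u\}}\,\big|\,\nabla X_{|K}(t)=\nabla X_{|K'}(t')=\nabla Y_{|L}(s)=0\big\}=o\Big(\exp\Big\{-\tfrac{u^2}{1+R}-\alpha u^2\Big\}\Big).
\]

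This last bound is the heart of the argument and is where I expect the main difficulty, since the conditioning on the gradients is genuine and cannot be ignored. Following the combination idea behind Lemma \ref{Lem:estimating joint tail for 3 rvs}, I would write $\{X(t)\ge u, X(t')\ge u, Y(s)\ge u\}\subset\{\overline Z\ge u, X(t')\ge u\}$ with $\overline Z=(X(t)+Y(s))/2$. Conditioning only reduces variance, so
\[
\mathrm{Var}\big(\overline Z\,\big|\,\nabla X_{|K}(t)=\nabla X_{|K'}(t')=\nabla Y_{|L}(s)=0\big)\le\mathrm{Var}(\overline Z)=\tfrac{1+r(t,s)}{2}\le\tfrac{1+R}{2},
\]
and the conditional law of $(\overline Z,X(t'))$ is a centered, non-degenerate bivariate Gaussian whose tail $\P\{\overline Z\ge u,X(t')\ge u\mid\cdots\}$ decays like $\exp\{-J(t,t',s)u^2\}$ for its large-deviation rate $J$, a continuous function on the compact domain; the polynomial weight $P$ contributes only sub-exponentially and is absorbed into a slightly smaller $\alpha$. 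It then remains to show $J>1/(1+R)$ at every point, after which continuity and compactness upgrade this to a uniform $\alpha>0$. The crux is the following dichotomy. If the above conditional variance is strictly below $(1+R)/2$, then already the marginal constraint $\overline Z\ge u$ forces $J>1/(1+R)$. If instead it equals $(1+R)/2$ — which forces $r(t,s)=R$ and $\overline Z$ to be uncorrelated with all three gradients — then the conditional covariance of $(\overline Z,X(t'))$ equals $\tfrac12(\E\{X(t)X(t')\}+r(t',s))<\tfrac12(1+R)$, so the constraint $X(t')\ge u$ is strictly binding and again $J>1/(1+R)$. The strict inequality $\E\{X(t)X(t')\}<1$, uniform on the disjoint compact sets $\overline K,\overline{K'}$ by $({\bf H}2)$, together with $r(t',s)\le R$, is exactly what the non-adjacency hypothesis $d(K,K')>0$ delivers; without it the two $X$-exceedances could reinforce one another and the gain would be lost.
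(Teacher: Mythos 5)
Your proposal is correct and follows the paper's skeleton: bound the triple moment by the Kac--Rice metatheorem, dominate the conditional expectation of the product of absolute determinants by a polynomial in the field values, and reduce everything to a trivariate Gaussian tail estimate driven by the half-sum $\overline Z=(X(t)+Y(s))/2$. Where you diverge is in the endgame. The paper bounds the \emph{conditional density} of the three gradients (given the field values) by a constant, which lets it discard the conditioning altogether and invoke the unconditional tail bound of Lemma \ref{Lem:estimating joint tail for 3 rvs}; that lemma in turn delegates the key bivariate estimate to a citation of Tong (1990). You instead keep the gradient conditioning, bound the unconditional gradient density, and run a large-deviation/rate-function dichotomy on the conditional law of $(\overline Z, X(t'))$. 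The two routes cost about the same, but yours makes fully explicit the one inequality that actually does the work and that the paper leaves implicit: at points where ${\rm Var}(\overline Z\mid\cdot)=(1+R)/2$, the constraint $X(t')\ge u$ is strictly binding because ${\rm Cov}(\overline Z,X(t'))=\tfrac12(\E\{X(t)X(t')\}+r(t',s))<\tfrac12(1+R)$, with $\E\{X(t)X(t')\}<1$ uniformly precisely because $d(K,K')>0$ keeps $\overline K$ and $\overline{K'}$ disjoint. This is exactly the point where adjacency would break the argument (and why the paper needs the separate Lemma \ref{Lem:cross terms je}), and your version isolates it cleanly; note also that the hypothesis $R_{12}\le\min\{R_{13},R_{23}\}$ of Lemma \ref{Lem:estimating joint tail for 3 rvs} plays no role in your argument, which is a small simplification. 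The only omissions are cosmetic: the zero-dimensional faces ($k=0$, $k'=0$ or $l=0$), which the paper also dispatches with a one-line remark, and an explicit word on why the rate $J$ is continuous (uniform non-degeneracy of the conditional covariance on the compact domain), which you implicitly use to pass from pointwise to uniform.
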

\begin{proof} We only prove the first line in (\ref{Eq:crossing terms disjoint sets}), since the proof for the second line is similar. 
Consider first the case when ${\rm dim}(K) =k \geq 1$, ${\rm dim}(K') =k' \geq 1$ and ${\rm dim}(L) =l \geq 1$. By the Kac-Rice 
metatheorem for high moments \cite{Adler:2007},
	\begin{equation}\label{Eq:disjoint faces}
	\begin{split}
	&\quad \E\{M_u (X,K) M_u (X,K') M_u (Y,L)\}\\
	&= \int_{K} dt \int_{K'} dt'\int_{L} ds \, \E\big \{ |{\rm det} \nabla^2 X_{|K}(t) | |{\rm det} \nabla^2 X_{|K'}(t') | |{\rm det} \nabla^2 Y_{|L}(s) |\\
	&\quad \times \mathbbm{1}_{\{X(t)\geq u, X(t')\geq u, Y(s)\geq u\}} \mathbbm{1}_{\{\nabla^2 X_{|K}(t) \prec 0, \, \nabla^2 X_{|K'}(t') \prec 0, \, \nabla^2 Y_{|L}(s) \prec 0\}} \big|\\
	&\quad  \nabla X_{|K}(t)=0, \nabla X_{|K'}(t')=0, \nabla Y_{|L}(s)=0 \big\}p_{\nabla X_{|K}(t), \nabla X_{|K'} (t'), \nabla Y_{|L}(s)}(0,0,0) \\
	&\leq \int_{K} dt \int_{K'} dt' \int_{L} ds \int_u^\infty dx \int_u^\infty dx' \int_u^\infty dy \, p_{X(t), X(t'), Y(s)}(x,x',y)\\
	&\quad \times \E\big \{ |{\rm det} \nabla^2 X_{|K}(t) | |{\rm det} \nabla^2 X_{|K'}(t') ||{\rm det} \nabla^2 Y_{|L}(s)|\big |\\
	&\quad X(t)=x, X(t')=x', Y(s)=y, \nabla X_{|K}(t)=0, \nabla X_{|K'}(t')=0, \nabla Y_{|L}(s)=0 \} \\
	&\quad \times p_{\nabla X_{|K}(t), \nabla X_{|K'} (t'), \nabla Y_{|L}(s)}(0,0,0|X(t)=x, X(t')=x', Y(s)=y).
	\end{split}
	\end{equation}
	Notice that the following two inequalities hold: for constants $a_{i_1}$, $b_{i_2}$ and $c_{i_3}$,
	\begin{equation*}
	\begin{split}
	&\prod_{i_1=1}^k |a_{i_1}| \prod_{i_2=1}^{k'} |b_{i_2}| \prod_{i_3=1}^{l} |c_{i_3}|\leq \frac{\sum_{i_1=1}^k |a_{i_1}|^{k+k'+l} 
	+ \sum_{i_2=1}^{k'} |b_{i_2}|^{k+k'+l} + \sum_{i_3=1}^{l} |c_{i_3}|^{k+k'+l}}{k+k'+l};
	\end{split}
	\end{equation*}
	and for any Gaussian variable $\xi$ and positive integer $m$, by Jensen's inequality,
	\begin{equation*}
	\begin{split}
	\E |\xi|^m \leq \E (|\E\xi|+|\xi-\E\xi|)^m &\leq 2^{m-1} (|\E\xi|^m + \E |\xi-\E\xi|^m)= 2^{m-1} (|\E\xi|^m + B_m({\rm Var}(\xi))^{m/2}),
	\end{split}
	\end{equation*}
	where $B_m$ is some constant depending only on $m$. Combining these two inequalities with the well-known conditional 
	formula for Gaussian variables, we obtain that there exist positive constants $C_1$ and $N_1$ such that for large $x$, $x'$ and $y$,
	\begin{equation}\label{Eq:disjoint faces 2}
	\begin{split}
	\sup_{t\in K, t'\in K', s\in L}&\E\big \{ |{\rm det} \nabla^2 X_{|K}(t) | |{\rm det} \nabla^2 X_{|K'}(t') | |{\rm det} \nabla^2 Y_{|L}(s) | \big| X(t)=x, X(t')=x',\\
	& Y(s)=y, \nabla X_{|K}(t)=0, \nabla X_{|K'}(t')=0,  \nabla Y_{|L}(s)=0 \big\} \leq C_1+(xx'y)^{N_1}.
	\end{split}
	\end{equation}
	Further, there exists $C_2>0$ such that
	\begin{equation}\label{Eq:disjoint faces 3}
	\begin{split}
	&\quad \sup_{t\in K, t'\in K', s\in L} p_{\nabla X_{|K}(t), \nabla X_{|K'} (t'), \nabla Y_{|L}(s)}(0,0,0|X(t)=x, X(t')=x', Y(s)=y)\\
	& \leq  \sup_{t\in K, t'\in K', s\in L}(2\pi)^{-(k+k'+l)/2}[{\rm det Cov} (\nabla X_{|K}(t), \nabla X_{|K'} (t'), \nabla Y_{|L}(s) | \\
	& \qquad \qquad\qquad\qquad\qquad\qquad\qquad X(t)=x, X(t')=x', Y(s)=y)]^{-1/2} \leq  C_2.
	\end{split}
	\end{equation}
Plugging \eqref{Eq:disjoint faces 2} and \eqref{Eq:disjoint faces 3} into \eqref{Eq:disjoint faces}, we obtain that there exists $C_3={\rm Vol}(K){\rm Vol}(K'){\rm Vol}(L)$ such that
\begin{equation}\label{Eq:disjoint faces 4}
	\begin{split}
&\E\{M_u (X,K) M_u (X,K') M_u (Y,L)\} \\
&\quad \le C_3C_2 \sup_{t\in K, t'\in K', s\in L} \E\{(C_1+|X(t)X(t')Y(s)|^{N_1}) \mathbbm{1}_{\{X(t) \geq u, X(t') \geq u, Y(s) \geq u\}} \}.
\end{split}
\end{equation}
The desired result then follows from Lemma \ref{Lem:estimating joint tail for 3 rvs}. The case when one of the dimensions 
of $K$, $K'$ and $L$ is zero can be proved similarly.
\end{proof}

\subsection{Factorial moments}

The following result shows that the factorial moments in \eqref{Ineq:lowerbound je} are super-exponentially small.
\begin{lemma}\label{Lem:factorial moments je} Let $\{(X(t),Y(s)): t\in T, s\in S\}$ be an $\R^2$-valued, centered, 
unit-variance Gaussian vector field satisfying $({\bf H}1)$, $({\bf H}2)$ and $({\bf H}3)$. Then there exists a
constant $\alpha>0$ such that for all $K\in \partial_k T$ and $L\in \partial_l T$ with $k, l\ge 0$, as $u\to \infty$,
	\begin{equation}\label{Eq:factorial moments je}
		\begin{split}
			\E \{M_u (X,K)[M_u (X,K)-1]M_u (Y,L) \} &= o\Big(\exp \Big\{ -\frac{u^2}{1+R} -\alpha u^2 \Big\}\Big),\\
			\E \{M_u (X,K)M_u (Y,L)[M_u (Y,L)-1] \} &= o\Big(\exp \Big\{ -\frac{u^2}{1+R} -\alpha u^2 \Big\}\Big).
		\end{split}
	\end{equation}
\end{lemma}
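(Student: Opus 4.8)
\noindent\emph{Proof proposal.} By symmetry it suffices to treat the first line. First I would invoke the Kac--Rice metatheorem for high moments, exactly as in the proof of Lemma~\ref{Lem:cross terms disjoint sets}, to write $\E\{M_u(X,K)[M_u(X,K)-1]M_u(Y,L)\}$ as a triple integral over $(t,t',s)\in K\times K\times L$ with $t\neq t'$ of the conditional expectation $\E\{|{\rm det}\nabla^2 X_{|K}(t)||{\rm det}\nabla^2 X_{|K}(t')||{\rm det}\nabla^2 Y_{|L}(s)|\mathbbm{1}_{\{X(t)\ge u,X(t')\ge u,Y(s)\ge u\}}\mid \nabla X_{|K}(t)=\nabla X_{|K}(t')=\nabla Y_{|L}(s)=0\}$ against the gradient density $p_{\nabla X_{|K}(t),\nabla X_{|K}(t'),\nabla Y_{|L}(s)}(0,0,0)$, the cases $k=0$ or $l=0$ being simpler. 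The essential new feature compared with Lemma~\ref{Lem:cross terms disjoint sets} is that $t$ and $t'$ lie on the \emph{same} face and may coincide, so I would fix a small $\epsilon_0>0$ and split the domain into the off-diagonal part $\{\|t-t'\|\ge\epsilon_0\}$ and the near-diagonal part $\{\|t-t'\|<\epsilon_0\}$.

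On the off-diagonal part the vector $(\nabla X_{|K}(t),\nabla X_{|K}(t'),\nabla Y_{|L}(s))$ is non-degenerate uniformly by $({\bf H}2)$, so I would reproduce the estimates \eqref{Eq:disjoint faces 2}--\eqref{Eq:disjoint faces 4}: bound the conditional determinant moments by a polynomial $C_1+(xx'y)^{N_1}$ and the conditional gradient density by a constant, reducing matters to the triple tail $\E\{(C_1+|X(t)X(t')Y(s)|^{N_1})\mathbbm{1}_{\{X(t)\ge u,X(t')\ge u,Y(s)\ge u\}}\}$. I would then apply Lemma~\ref{Lem:estimating joint tail for 3 rvs}, choosing the pair to which the sum-trick is applied according to the size of $\rho:=\sup_{\|t-t'\|\ge\epsilon_0}\E\{X(t)X(t')\}<1$ relative to $R$: if $\rho\ge R$ pair $X(t)$ with $Y(s)$, and if $\rho\le R$ pair $X(t)$ with $X(t')$. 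In either case the paired correlation equals $\min\{\rho,R\}\le R$ and the hypothesis $R_{12}\le\min\{R_{13},R_{23}\}$ holds, so Lemma~\ref{Lem:estimating joint tail for 3 rvs} yields $o(\exp\{-u^2/(1+R)-\alpha u^2\})$ (a finite covering of the compact off-diagonal region by product boxes handles the fact that the supremum defining $\rho$ runs over a non-product set).

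The near-diagonal part is the main obstacle, and this is where $({\bf H}3)$ enters. Writing $t'=t+wv$ with $v$ a unit vector in the face and $w=\|t'-t\|\in(0,\epsilon_0)$, the uniform mean-square H\"older condition $({\bf H}1)$ gives $[\nabla X_{|K}(t')-\nabla X_{|K}(t)]/w=\nabla^2 X_{|K}(t)v+O(w^\delta)$ in $L^2$, so conditioning on $\nabla X_{|K}(t)$ and $\nabla X_{|K}(t')$ both vanishing is, up to the $O(w^\delta)$ error, the same as conditioning on $\nabla X_{|K}(t)=0$ together with the extra linear constraint $\nabla^2 X_{|K}(t)v=0$; the vector $(\nabla X_{|K}(t),\nabla^2 X_{|K}(t)v,\nabla Y_{|L}(s))$ is non-degenerate uniformly in $w\in[0,\epsilon_0]$ and $v$ by $({\bf H}2)$. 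I would then run the variance-reduction argument of Lemma~\ref{Lem:estimating conditional joint tail} with $\xi_1=X(t)$, $\xi_2=Y(s)$ and this augmented conditioning vector, after discarding the indicator $\mathbbm{1}_{\{X(t')\ge u\}}$ while retaining the constraint $\nabla X_{|K}(t')=0$. The point is that the emptiness hypothesis \eqref{Condition:coditional joint tail} now holds: at any point where $r(t,s)=R$, using unit variance and the identities preceding Proposition~\ref{prop:H3},
\[
{\rm Cov}\big(X(t)+Y(s),\,\nabla^2 X_{|K}(t)v\big)=\big(\nabla^2_t r(t,s)-\Lambda\big)v,
\]
where $\Lambda={\rm Var}(\nabla X_{|K}(t))\succ0$ and $(\nabla^2_t r(t,s))_{\sigma(K)}\preceq0$ by Proposition~\ref{prop:H3} (since $\sigma(K)\subset\mathcal{I}^R_X(t,s)$). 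Hence $\nabla^2_t r(t,s)-\Lambda\prec0$ is non-singular and this covariance is nonzero for every unit $v$, so conditioning on $\nabla^2 X_{|K}(t)v=0$ strictly lowers ${\rm Var}(X(t)+Y(s))$ below $2(1+R)$; by continuity and compactness the conditional variance of $(X(t)+Y(s))/2$ stays below $(1+R)/2-\eta$ for some $\eta>0$ uniformly over the near-diagonal region, which gives the conditional tail bound $o(\exp\{-u^2/(1+R)-\alpha u^2\})$.

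Finally I would assemble the pieces: the conditional determinant moments contribute at most a polynomial factor in $u$, and the gradient density, after the reparametrization $t'=t+wv$, is integrable in $(w,v,t,s)$ near $w=0$ by the standard Kac--Rice near-diagonal estimates under $({\bf H}1)$--$({\bf H}2)$; both are absorbed into the $e^{-\alpha u^2}$ gain. The only genuinely delicate step is the near-diagonal analysis, where one must show that the constraint coming from a \emph{second} critical point of $X$ near $t$ forces a near-degenerate direction of $\nabla^2 X_{|K}(t)$ whose suppression---quantified through $({\bf H}3)$ via $\nabla^2_t r-\Lambda\prec0$---produces a uniform variance gap, while away from the maximal-correlation set the gap is free because $r(t,s)<R$ there; compactness then glues the two regimes into a single $\alpha>0$. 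The second line of \eqref{Eq:factorial moments je} follows by interchanging the roles of $X$ and $Y$.
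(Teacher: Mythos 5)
Your proposal is correct and follows essentially the same route as the paper: the off-diagonal part is handled exactly as in Lemma~\ref{Lem:cross terms disjoint sets} via Lemma~\ref{Lem:estimating joint tail for 3 rvs}, and your near-diagonal variance-gap computation (that at points with $r(t,s)=R$ one has $\E\{[X(t)+Y(s)]\nabla^2 X_{|K}(t)v\}=(\nabla^2_t r(t,s)-{\rm Cov}(\nabla X_{|K}(t)))v\neq 0$ by Proposition~\ref{prop:H3} and unit variance) is precisely the paper's argument that $\beta(K,L)<(1+R)/2$. The only cosmetic difference is that the paper discretizes the diagonal/off-diagonal split by covering $K$ with small cubes and invoking Piterbarg's Lemma~3 on each, whereas you split continuously at $\|t-t'\|=\epsilon_0$; both defer the delicate near-diagonal Kac--Rice degeneration to the same standard estimate.
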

\begin{proof} We only prove the first line in (\ref{Eq:factorial moments je}), since the proof for the second line is similar. 
Note that, if $k=0$, then $M_u (X,K)[M_u (X,K)-1]\equiv 0$ and hence the desired result holds. Without loss of generality, 
we assume $k\ge 1$ or even $k=N$ for simplifying notations. We first focus on the estimation when $K$ is replaced by 
a small $N$-dimensional subset $J\subset K$. 
	
	\textbf{Case (i): $\bm{l=0}$.} The face $L$ becomes a single point, say $L=\{s\}$. Applying the Kac-Rice metatheorem 
	for high moments \cite{Adler:2007}, we have the following upper bounds (removing one restriction on $u$ and 
	another restriction on the negative definiteness of Hessian matrices), 
	\begin{equation}\label{Eq:factorial 1}
		\begin{split}
			&\quad \E \{M_u (X,J)[M_u (X,J)-1]M_u (Y,L) \}\\
			&\le \int_J dt \int_J dt'\, \E\big \{ |{\rm det} \nabla^2 X(t) | |{\rm det} \nabla^2 X(t') | \mathbbm{1}_{\{X(t)\geq u, Y(s)\geq u\}}\big|\nabla X(t)= \nabla X(t')=0\big\}\\
			&\quad \times p_{\nabla X(t), \nabla X(t')}(0,0) \\
			&\leq \int_J dt \int_J dt'\int_u^\infty dx \, p_{\frac{X(t)+ Y(s)}{2}}(x|\nabla X(t)=\nabla X(t')=0)p_{\nabla X(t), \nabla X(t')}(0,0)\\
			&\quad \times \E\big \{ |{\rm det} \nabla^2 X(t) | |{\rm det} \nabla^2 X(t') | \big|(X(t)+ Y(s))/2=x, \nabla X(t)=\nabla X(t')=0\big\},
		\end{split}
	\end{equation}
	where the last inequality is due to the fact $\mathbbm{1}_{\{X(t)\geq u, Y(s)\geq u\}}\le \mathbbm{1}_{\{[X(t)+ Y(s)]/2\geq u\}}$. 
	Following the same arguments for proving Lemma 3 in \citet{Piterbarg96}, we obtain from (\ref{Eq:factorial 1}) that, for any $\ep>0$, 
	there exists $\delta>0$ such that for $J$ with ${\rm diam}(J)= \sup_{t,t'\in J}\|t-t'\| \leq \delta$ and $u$ large enough,
	\begin{equation}\label{Eq:factorial_moment 2}
		\E \{M_u (X,J)[M_u (X,J)-1]M_u (Y,L) \} \leq \exp\left\{ -\frac{u^2}{2\beta(J,L) + \ep} \right\},
	\end{equation}
where 
\begin{equation}\label{Eq:beta}
	\begin{split}
		\beta(J,L)=\sup_{t\in J, s\in L, e\in \mathbb{S}^{N-1}} {\rm Var}((X(t)+ Y(s))/2 |\nabla X(t)=0, \nabla^2 X(t)e=0),
	\end{split}
\end{equation}
and  $\mathbb{S}^{N-1}$ the $(N-1)$-dimensional unit sphere in $\R^N$.
	
	\textbf{Case (ii): $\bm{l\ge 1}$. } To simplify the notations, without loss of generality, we assume $l=N$. Applying again the 
	Kac-Rice metatheorem for high moments, we have the following upper bounds, 
	\begin{equation}\label{Eq:factorial 2}
		\begin{split}
			&\quad \E \{M_u (X,J)[M_u (X,J)-1]M_u (Y,L) \}\\
			&\le \int_J dt \int_J dt'\int_L ds \, \E\big \{ |{\rm det} \nabla^2 X(t) | |{\rm det} \nabla^2 X(t') | |{\rm det} \nabla^2 Y(s) |\mathbbm{1}_{\{X(t)\geq u, Y(s)\geq u\}}\big|\\
			&\quad \nabla X(t)=0, \nabla X(t')=0, \nabla Y(s)=0 \big\}p_{\nabla X(t), \nabla X(t'), \nabla Y(s)}(0,0,0) \\
			&\leq \int_J dt \int_J dt'\int_L ds \int_u^\infty dx \, p_{\frac{X(t)+ Y(s)}{2}}(x|\nabla X(t)=0, \nabla X(t')=0, \nabla Y(s)=0)\\
			&\quad \times \E\big \{ |{\rm det} \nabla^2 X(t) | |{\rm det} \nabla^2 X(t') | |{\rm det} \nabla^2 Y(s) |\big|[X(t)+ Y(s)]/2=x, \\
			&\quad \nabla X(t)=0, \nabla X(t')=0, \nabla Y(s)=0 \big\} p_{\nabla X(t), \nabla X(t'), \nabla Y(s)}(0,0,0).
		\end{split}
	\end{equation}
Comparing \eqref{Eq:factorial 2} with \eqref{Eq:factorial 1}, the only essential difference is the additional effect of $\nabla Y(s)=0$, 
which however will not affect the desired super-exponentially small estimation since $(X, Y)$ is nondegenerate under the condition 
({\bf H}2). Therefore, similarly to (\ref{Eq:factorial_moment 2}), we have that, for any $\ep>0$, there exists $\delta>0$ such that
 for $J$ with ${\rm diam}(J) \leq \delta$ and $u$ large enough,
\begin{equation}\label{Eq:factorial_moment}
	\E \{M_u (X,J)[M_u (X,J)-1]M_u (Y,L) \} \leq \exp\left\{ -\frac{u^2}{2\gamma(J,L) + \ep} \right\}\leq \exp\left\{ -\frac{u^2}{2\beta(J,L) + \ep} \right\},
\end{equation}
where
\begin{equation*}
	\begin{split}
		\gamma(J,L)&=\sup_{t\in J, s\in L, e\in \mathbb{S}^{N-1}} {\rm Var}((X(t)+ Y(s))/2 | \nabla X(t)= \nabla Y(s)=\nabla^2 X(t)e=0)\le \beta(J,L).
	\end{split}
\end{equation*}

	The set $K$ may be covered by congruent cubes $J_i$ with disjoint interiors, edges parallel to coordinate axes 
	and sizes small enough such that ${\rm diam}(J_i\cup J_j)\le \delta$ for any two neighboring cubes $J_i$ and 
	$J_j$ (i.e., $d(J_i, J_j)=0$). Then
	\begin{equation}\label{Eq:decompose factorial moment je}
	\begin{split}
	&\quad \E \{M_u (X,K)[M_u (X,K)-1]M_u (Y,L) \}\\
	&\leq \E \Big\{\Big(\sum_i M_u (X,J_i)\Big)\Big[ \sum_j M_u (X,J_j)-1 \Big]M_u (Y,L) \Big\}\\
	&=\E \Big\{\Big(\sum_i M_u (X,J_i)\sum_j M_u (X,J_j) - \sum_i M_u (X,J_i) \Big)M_u (Y,L) \Big\}\\
	&=\sum_i \E \{M_u (X,J_i)^2M_u (Y,L)\} + \sum_{i\neq j} \E \{M_u (X,J_i)M_u (X,J_j)M_u (Y,L)\}\\
	&\quad - \sum_i \E \{M_u (X,J_i)M_u (Y,L)\} \\
	&= \sum_i \E \{M_u (X,J_i)[M_u (X,J_i)-1]M_u (Y,L)\}+ \sum_{i\neq j} \E \{M_u (X,J_i)M_u (X,J_j)M_u (Y,L)\}.
	\end{split}
	\end{equation}
	By Lemma \ref{Lem:cross terms disjoint sets}, there exists $\alpha'>0$ such that for $u$ large enough,
	\begin{equation}\label{Eq:high moments for nonneighboring sets}
	\begin{split}
	\sum_{i\neq j:\, d(J_i, J_j)>0} \E \{M_u (X,J_i)M_u (X,J_j)M_u (Y,L)\} \leq \exp\Big\{ -\frac{u^2}{1+R} -\alpha' u^2\Big\}.
	\end{split}
	\end{equation}
	If $J_i$ and $J_j$ are neighboring, i.e., $d(J_i, J_j)=0$, we have
	\begin{equation}\label{Eq:neighbor}
	\begin{split}
	&\quad \E \{M_u(X, J_i \cup J_{j})[M_u(X, J_i \cup J_{j})-1]M_u (Y,L)\} \\
	&= \E \{ [M_u(X, J_i)+ M_u(X, J_{j})][M_u(X, J_i)+ M_u(X, J_{j})-1]M_u (Y,L)\}\\
	&= 2 \E\{M_u(X, J_i)M_u(X, J_{j})M_u (Y,L)\} + \E \{M_u(X,J_i)[M_u(X,J_i)-1]M_u (Y,L)\}\\
	&\quad  + \E \{M_u(X, J_{j})[M_u(X, J_{j})-1]M_u (Y,L)\}.
	\end{split}
	\end{equation}
	Applying \eqref{Eq:factorial_moment 2} and \eqref{Eq:factorial_moment} to the second last sum in 
	\eqref{Eq:decompose factorial moment je} and \eqref{Eq:neighbor}, we see that for any $\ep>0$ and $u$ large enough,
	\begin{equation}\label{Eq:small factorial moments and neighboring sets}
	\begin{split}
	&\sum_i \E \{M_u (X,J_i)[M_u (X,J_i)-1]M_u (Y,L)\} \\
	&+  \sum_{i\neq j:\, d(J_i, J_j)=0} \E \{M_u (X,J_i)M_u (X,J_j)M_u (Y,L)\} \leq \exp\Big\{ -\frac{u^2}{2\beta(K,L)+\ep} \Big\},
	\end{split}
	\end{equation}
	where $\beta(K,L)$ is defined in \eqref{Eq:beta} with $J$ replaced by $K$. It is evident that
	\[
	\beta(K,L) \le \sup_{t\in K, s\in L} {\rm Var}((X(t)+ Y(s))/2) = (1+R)/2.
	\] 
	Moreover, we will how below that 
	\begin{equation}\label{Eq:comparing two sup}
	\begin{split}
	\beta(K,L)<(1+R)/2.
	\end{split}
	\end{equation}
	By the definition, if $\beta(K,L)= (1+R)/2$, then there exist $(t,s)\in \bar{K}\times \bar{L}$ and $e\in \mathbb{S}^{N-1}$ such that
	\begin{equation}\label{Eq:conditional var attains max}
	\begin{split}
	{\rm Var}((X(t)+ Y(s))/2 |\nabla X(t)=0, \nabla^2 X(t)e=0) = (1+R)/2,
	\end{split}
	\end{equation}
	implying $r(t,s)=R$ and $\E\{[X(t)+Y(s)]\nabla X(t)\}=\E\{Y(s)\nabla X(t)\}=0$. By Proposition \ref{prop:H3}, 
	$\E\{Y(s) \nabla^2 X(t)\}\preceq 0$. Since $X(t)$ has unit variance, $\E\{X(t) \nabla^2 X(t)\}=-{\rm Cov}(\nabla X(t))\prec 0$. 
	Therefore,  $\E\{[X(t)+Y(s)] \nabla^2 X(t)e\} \neq 0$ for all $e\in \mathbb{S}^{N-1}$. This contradicts (\ref{Eq:conditional var attains max}) 
	and hence (\ref{Eq:comparing two sup}) holds. Applying this fact and plugging (\ref{Eq:high moments for nonneighboring sets}) 
	and (\ref{Eq:small factorial moments and neighboring sets}) into (\ref{Eq:decompose factorial moment je}), we finish the proof.
\end{proof}

\subsection{Adjacent faces}
The following result shows that the last two sums involving the joint moment of two adjacent faces in \eqref{Ineq:lowerbound je} are super-exponentially small.
\begin{lemma}\label{Lem:cross terms je} Let $\{(X(t),Y(s)): t\in T, s\in S\}$ be an $\R^2$-valued, centered, unit-variance Gaussian vector field satisfying $({\bf H}1)$, $({\bf H}2)$ and $({\bf H}3)$. Then there exists $\alpha>0$ such that as $u\to \infty$,
	\begin{equation}\label{Eq:crossing terms je}
	\begin{split}
	\E\{M_u^E (X,K) M_u^E (X,K') M_u^E (Y,L)\} &= o\Big(\exp \Big\{ -\frac{u^2}{1+R} -\alpha u^2 \Big\}\Big),\\
	\E\{M_u^E (X,K) M_u^E (Y,L) M_u^E (Y,L')\} &= o\Big(\exp \Big\{ -\frac{u^2}{1+R} -\alpha u^2 \Big\}\Big),
	\end{split}
	\end{equation}
where $K$ and $K'$ are different faces of $T$ with $d(K,K')=0$, $L$ and $L'$ are different faces of $S$ with $d(L,L')=0$.
\end{lemma}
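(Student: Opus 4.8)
Since the two displays in \eqref{Eq:crossing terms je} are symmetric, I would prove only the first. Following the pattern of Lemmas \ref{Lem:cross terms disjoint sets} and \ref{Lem:factorial moments je}, the plan is to apply the Kac-Rice metatheorem for high moments to write $\E\{M_u^E(X,K)M_u^E(X,K')M_u^E(Y,L)\}$ as an integral over $(t,t',s)\in K\times K'\times L$ of the conditional expectation of $|\det\nabla^2 X_{|K}(t)|\,|\det\nabla^2 X_{|K'}(t')|\,|\det\nabla^2 Y_{|L}(s)|$ against the event $\{X(t)\ge u,\,X(t')\ge u,\,Y(s)\ge u\}$ together with the extended-outward sign conditions, given $\nabla X_{|K}(t)=\nabla X_{|K'}(t')=\nabla Y_{|L}(s)=0$, multiplied by the joint gradient density at the origin. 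As before the determinant factor is bounded by a polynomial in $(x,x',y)$ and the conditional gradient density by a constant, both valid as long as $t\neq t'$, where (H2) guarantees non-degeneracy. The essential new difficulty, absent from Lemma \ref{Lem:cross terms disjoint sets}, is that adjacent faces permit $t\to t'$: there the triple $(X(t),X(t'),Y(s))$ degenerates, the two-point tail no longer beats $e^{-u^2/(1+R)}$, and it is exactly the extended-outward conditions (the reason we must work with $M_u^E$ rather than $M_u$) that have to supply the missing decay.

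Accordingly, I would fix a small $\delta>0$ and split the $(t,t')$-integration into the far part $\{\|t-t'\|\ge\delta\}$ and the near part $\{\|t-t'\|<\delta\}$. On the far part the triple is non-degenerate; dropping the sign indicators, the three pairwise maximal correlations are $\sup_{\|t-t'\|\ge\delta}\E\{X(t)X(t')\}<1$ and two cross-correlations bounded by $R$, so the smallest of the three is $\le R$. Relabelling that pair as the first two coordinates, Lemma \ref{Lem:estimating joint tail for 3 rvs} applies and yields the bound $o(e^{-u^2/(1+R)-\alpha u^2})$ after integrating the polynomial-times-tail over the compact domain---this is verbatim the argument of Lemma \ref{Lem:cross terms disjoint sets}.

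The near part is the crux. Here I would bound $\mathbbm{1}_{\{X(t)\ge u,\,Y(s)\ge u\}}\le \mathbbm{1}_{\{[X(t)+Y(s)]/2\ge u\}}$ and reduce, as in Lemma \ref{Lem:factorial moments je}, to showing that the relevant conditional variance stays strictly below $(1+R)/2$; covering $K$ and $K'$ by small congruent cubes and disposing of non-neighbouring cube pairs by the far estimate, it suffices to bound, uniformly over $(t_0,s_0)$ with $r(t_0,s_0)=R$ lying in the shared boundary $\bar K\cap\bar{K'}$,
\[
\beta(K,K',L):=\sup\,\mathrm{Var}\Big(\tfrac{X(t_0)+Y(s_0)}{2}\;\Big|\;\mathcal C\Big)<\frac{1+R}{2},
\]
where $\mathcal C$ collects the limiting linear constraints extracted as $t'\to t=t_0$. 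Because $K\neq K'$ are adjacent they cannot be separated by any coordinate slab, which forces $\sigma(K)\neq\sigma(K')$; collapsing $\nabla X_{|K}(t)=0$ and $\nabla X_{|K'}(t')=0$ then produces the enlarged set of conditions $X_i(t_0)=0$ for every $i\in\sigma(K)\cup\sigma(K')$ together with $(\nabla^2 X(t_0)e)_i=0$ for $i\in\sigma(K)\cap\sigma(K')$, where $e=\lim (t'-t)/\|t'-t\|$. If some newly gained index $i\in\sigma(K')\setminus\sigma(K)$ has $\partial r/\partial t_i(t_0,s_0)=\E\{X_i(t_0)Y(s_0)\}\neq0$, then, because $\E\{X_i(t_0)X(t_0)\}=0$ for a unit-variance field, conditioning on $X_i(t_0)=0$ already strictly lowers the variance. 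Otherwise all such indices lie in $\mathcal I^R_X(t_0,s_0)$, and I would invoke Proposition \ref{prop:H3}: there $\E\{Y(s_0)\nabla^2 X(t_0)\}\preceq0$ while $\E\{X(t_0)\nabla^2 X(t_0)\}=-\mathrm{Cov}(\nabla X(t_0))\prec0$, so $\E\{[X(t_0)+Y(s_0)]\,\nabla^2 X(t_0)e\}\neq0$ and the Hessian-direction constraint again forces strict variance reduction---the exact mechanism used to prove $\beta(K,L)<(1+R)/2$ in Lemma \ref{Lem:factorial moments je}.

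I expect the near part to be the main obstacle. The degeneracy as $t\to t'$ invalidates the triple-tail estimate, so the whole burden falls on showing that the constraints forced by the extended-outward conditions of the two faces land on coordinates actually correlated with $X(t_0)+Y(s_0)$; this requires a short case analysis according to the structure of $\sigma(K)\triangle\sigma(K')$ and of $\mathcal I^R_X(t_0,s_0)$, careful bookkeeping of the admissible limiting directions $e$ (whose components in $\sigma(K)\setminus\sigma(K')$ are sign-constrained by the geometry of the shared face), and a compactness argument to upgrade the strict inequality at each $(t_0,s_0)$ to a uniform bound $\beta(K,K',L)<(1+R)/2$. Once that is in hand, assembling the far and near estimates, summing over the finitely many faces $K,K',L$ and over the cube cover, and taking $\alpha$ to be the minimum of the finitely many positive constants produced completes the proof; the second line of \eqref{Eq:crossing terms je} follows by interchanging the roles of $X$ and $Y$.
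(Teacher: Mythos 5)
Your overall architecture is right and matches the paper's: split $K\times K'$ into a part where $t$ and $t'$ stay apart (disposed of exactly as in Lemma \ref{Lem:cross terms disjoint sets} via Lemma \ref{Lem:estimating joint tail for 3 rvs}) and a part near the common boundary $I=\bar K\cap\bar{K'}$, where the collapsed constraints and the extended-outward conditions must supply the decay. Your identification of the limiting constraints ($X_i(t_0)=0$ for $i\in\sigma(K)\cup\sigma(K')$, plus $(\nabla^2X(t_0)e)_i=0$ only for $i\in\sigma(K)\cap\sigma(K')$) is also correct, and your first alternative --- some $i\in\sigma(K')\setminus\sigma(K)$ with $\E\{X_i(t_0)Y(s_0)\}\neq 0$ forces a strict drop in the conditional variance of $[X(t_0)+Y(s_0)]/2$ --- is exactly how the paper handles the case $\mathcal{M}_0=\emptyset$ (see \eqref{Eq:M0}) and the complement of a neighborhood of $\mathcal{M}_0$.

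The gap is in your second alternative. When all the relevant first derivatives of $r$ vanish at $(t_0,s_0)$, Proposition \ref{prop:H3} does give $\E\{[X(t_0)+Y(s_0)]\nabla^2X(t_0)e\}\neq 0$, but the only Hessian-direction constraints you have extracted are the components indexed by $\sigma(K)\cap\sigma(K')$, and nothing guarantees that the nonzero components of $\E\{[X(t_0)+Y(s_0)]\nabla^2X(t_0)e\}$ lie in that index set. Since adjacent faces only satisfy $\sigma(K)\neq\sigma(K')$, the intersection can be small or empty (e.g.\ $K=\{(t_1,0)\}$, $K'=\{(0,t_2)\}$ meeting at a vertex, where $m=0$ and no second-order equality constraint survives the collapse); at a point of $\mathcal{M}_0$ all the surviving equality constraints are then uncorrelated with $X(t_0)+Y(s_0)$, so your $\beta(K,K',L)$ equals $(1+R)/2$ exactly and the proposed reduction to a strict variance inequality is false, not merely hard. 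In that regime the decay genuinely comes from the one-sided conditions ($X_i(t)=0$ together with $\varepsilon^*_iX_i(t')\ge 0$ for $i\in\sigma(K)\triangle\sigma(K')$), and the mechanism is not variance reduction but a Piterbarg-type estimate: conditionally on $[X(t)+Y(s)]/2=x$ large, the mean of the divided difference $[X_i(t')-X_i(t)]/\|t'-t\|$ is approximately $-\alpha_i(t,t',s)\,x$ with $\alpha_i=\l e_i,\La_{K\cup K'}(t,s)e_{t,t'}\r$, so requiring it to have the outward sign is itself super-exponentially unlikely. This is why the paper covers the neighborhood of $\mathcal{M}_0$ by the sets $D_0,D_{m+1},\dots,D_{k+k'-m}$ of \eqref{Eq:D_k je}, using positive definiteness of $\La_{K\cup K'}$ to ensure that either the intersection indices carry enough of $\l e_{t,t'},\La e_{t,t'}\r$ (the $D_0$ case, where your variance argument does work) or some index in the symmetric difference does (the $D_i$ cases, handled as in Eqs.\ (4.33) and (4.36) of Cheng and Xiao (2016)). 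Your proposal names the right ingredients but does not contain this dichotomy, and without it the "near" part does not close.
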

\begin{proof} We only prove the first line in (\ref{Eq:crossing terms je}), since the proof for the second line is the same. Let $I:= \bar{K}\cap \bar{K'}$, which is nonempty since $d(K,K')=0$. Without loss of generality, assume
	\begin{equation*}
	\begin{split}
	\sigma(K) &= \{1, \ldots, m, m+1, \ldots, k\},\\
	\sigma(K')&= \{1, \ldots, m, k+1, \ldots, k+k'-m\},\\
	\sigma(L) &= \{1, \ldots, l\},
	\end{split}
	\end{equation*}
	where $0 \leq m \leq k \leq k' \leq N$ and $k'\geq 1$. If $k=0$, we consider $\sigma(K)
	= \emptyset$ by convention. Under such assumption, $K\in \partial_k T$, $K'\in \partial_{k'} T$, $\text{dim} (I) =m$ and $L\in \partial_l S$. We assume also that all elements in $\ep(K)$ and $\ep(K')$ are 1.
	
	We first consider the case when $k\geq 1$ and $l\ge 1$. By the Kac-Rice metatheorem,
	\begin{equation}\label{Eq:cross term je}
	\begin{split}
	&\quad \E\{M_u^E (X,K) M_u^E (X,K') M_u^E (Y,L)\}\\
	&\leq \int_{K} dt\int_{K'} dt'\int_{L} ds \int_u^\infty dx \int_u^\infty dx' \int_u^\infty dy  \int_0^\infty dz_{k+1} \cdots \int_0^\infty dz_{k+k'-m}\\
	&\quad \int_0^\infty dw_{m+1} \cdots \int_0^\infty dw_{k} \E\big \{ |\text{det} \nabla^2 X_{|K}(t) ||\text{det} \nabla^2 X_{|K'}(t') ||\text{det} \nabla^2 Y_{|L}(s) | \big|\\
	&\quad X(t)=x, X(t')=x', Y(s)=y,\\
	&\quad \nabla X_{|K}(t)=0,  X_{k+1}(t)=z_{k+1}, \ldots, X_{k+k'-m}(t)=z_{k+k'-m}, \\
	& \quad \nabla X_{|K'}(t')=0, X_{m+1}(t')=w_{m+1}, \ldots, X_{k}(t')=w_{k}, \nabla Y_{|L}(s)=0 \big\}\\
	& \quad \times p_{t,t',s}(x,x',y,0, z_{k+1}, \ldots,z_{k+k'-m}, 0, w_{m+1}, \ldots, w_{k}, 0)\\
	&:= \int \int \int_{K\times K'\times L} A(t,t',s)\,dtdt'ds,
	\end{split}
	\end{equation}
	where $p_{t,t',s}(x,x',y,0, z_{k+1}, \ldots,z_{k+k'-m}, 0, w_{m+1}, \ldots, w_{k}, 0)$ is the density of
	\begin{equation*}
	\begin{split}
	&\big(X(t),X(t'),Y(s),\nabla X_{|K}(t), X_{k+1}(t),\ldots, X_{k+k'-m}(t), \\
	&\quad \nabla X_{|K'}(t'), X_{m+1}(t'), \ldots, X_{k}(t'), \nabla Y_{|L}(s)\big)
	\end{split}
	\end{equation*}
	evaluated at $(x,x',y,0, z_{k+1}, \ldots,z_{k+k'-m}, 0, w_{m+1}, \ldots, w_{k}, 0)$. We define 
	\begin{equation}\label{Eq:M0}
	\begin{split}
	\mathcal{M}_0:=\{(t,s)\in I \times \bar{L}:\, &r(t,s)=R, \, \E\{X_i(t)Y(s)\}=\E\{X(t)Y_j(s)\}=0, \\
	&\forall i=1,\ldots, k+k'-m, \, j=1, \ldots, l\},
	\end{split}
	\end{equation}
	and distinguish two cases for $\mathcal{M}_0$ in discussions below.
	
	\textbf{Case (i): $\bm{\mathcal{M}_0 = \emptyset}$.} Since $I$ is a compact set, by the uniform continuity of conditional 
	variance, there exist constants $\ep_1, \delta_1>0$ such that
	\begin{equation}\label{Eq:super-exponentially small}
	\begin{split}
	&\sup_{t\in B(I, \delta_1), \, t'\in B'(I, \delta_1), \, s\in L} {\rm Var} ( [X(t)+Y(s)]/2 |\nabla X_{|K}(t), \nabla X_{|K'} (t'), \nabla Y_{|L}(s))\leq \frac{1+R}{2} - \ep_1,
	\end{split}
	\end{equation}
	where $B(I, \delta_1)=\{t\in K: d(t, I)\le \delta_1\}$ and $B'(I, \delta_1)=\{t\in K': d(t, I)\le \delta_1\}$. Partitioning $K\times K'$ into $B(I, \delta_1) \times B'(I, \delta_1)$ and $(K\times K')\backslash (B(I, \delta_1) \times B'(I, \delta_1))$, and applying the Kac-Rice formula, we obtain
	\begin{equation}\label{Eq:Mu3}
	\begin{split}
	&\quad \E\{M_u (X,K) M_u (X,K') M_u (Y,L)\}\\
	&\leq \int_{(K\times K')\backslash (B(I, \delta_1) \times B'(I, \delta_1))} dt dt' \int_{L} ds \, p_{\nabla X_{|K}(t), \nabla X_{|K'} (t'), \nabla Y_{|L}(s)}(0,0,0)\\
	&\quad \times \E \big\{ |{\rm det} \nabla^2 X_{|K}(t) | |{\rm det} \nabla^2 X_{|K'}(t') | |{\rm det} \nabla^2 Y_{|L}(s) |\mathbbm{1}_{\{X(t)\geq u, X(t')\geq u, Y(s)\geq u\}}\big| \\
	&\qquad \nabla X_{|K}(t)=0, \nabla X_{|K'}(t')=0, \nabla Y_{|L}(s)=0 \big\} \\
	&+ \int_{B(I, \delta_1) \times B'(I, \delta_1)} dt dt' \int_{L} ds \, p_{\nabla X_{|K}(t), \nabla X_{|K'} (t'), \nabla Y_{|L}(s)}(0,0,0)\\
	&\quad \times \E \big\{ |{\rm det} \nabla^2 X_{|K}(t) | |{\rm det} \nabla^2 X_{|K'}(t') ||{\rm det} \nabla^2 Y_{|L}(s) | \mathbbm{1}_{\{X(t)\geq u, Y(s)\geq u\}}\big| \\
	&\qquad \nabla X_{|K}(t)=0, \nabla X_{|K'}(t')=0, \nabla Y_{|L}(s)=0 \big\} \\
	&:=I_1 + I_2.
	\end{split}
	\end{equation}
	Note that
	\begin{equation*}
	\begin{split}
	(K\times K') \backslash (B(I, \delta_1) \times B'(I, \delta_1)) &= \Big( (K\backslash B(I, \delta_1)) \times B'(I, \delta_1) \Big) \bigcup \Big( B(I, \delta_1)\times (K\backslash B(I, \delta_1)) \Big) \\
	&  \quad \bigcup  \Big( (K\backslash B(I, \delta_1))\times (K\backslash B(I, \delta_1)) \Big).
	\end{split}
	\end{equation*}
	where each product on the right hand side consists of two sets with a positive distance. It then follows from Lemma \ref{Lem:cross terms disjoint sets} that $I_1$ is super-exponentially small. On the other hand, since $\mathbbm{1}_{\{X(t)\geq u, Y(s)\geq u\}}\le \mathbbm{1}_{\{[X(t)+ Y(s)]/2\geq u\}}$, one has
	\begin{equation}\label{Eq:I2}
		\begin{split}
			I_2&\le\int_{B(I, \delta_1) \times B'(I, \delta_1)} dt dt' \int_{L} ds\int_u^\infty dx\,p_{\frac{X(t)+Y(s)}{2}}(x|\nabla X_{|K}(t)=\nabla X_{|K'}(t')=\nabla Y_{|L}(s)=0) \\
			&\quad \times \E \big\{ |{\rm det} \nabla^2 X_{|K}(t) | |{\rm det} \nabla^2 X_{|K'}(t') ||{\rm det} \nabla^2 Y_{|L}(s) | \big| [X(t)+ Y(s)]/2=x,\\
			&\qquad \nabla X_{|K}(t)=0, \nabla X_{|K'}(t')=0, \nabla Y_{|L}(s)=0 \big\}p_{\nabla X_{|K}(t), \nabla X_{|K'} (t'), \nabla Y_{|L}(s)}(0,0,0).
		\end{split}
	\end{equation}
	 Combining this with \eqref{Eq:super-exponentially small}, we obtain that $I_2$ and hence $\E\{M_u^E (X,K) M_u^E (X,K') M_u^E (Y,L)\}$ are super-exponentially small.

	\textbf{Case (ii): $\bm{\mathcal{M}_0 \neq \emptyset}$.} Let
	\begin{equation*}
	\begin{split}
	B(\mathcal{M}_0,\delta_2):=\{(t,t',s)\in K\times K' \times L: d((t,s),\mathcal{M}_0)\vee d((t',s),\mathcal{M}_0)\le \delta_2  \},
	\end{split}
	\end{equation*}
	where $\delta_2$ is a small positive number to be specified. Note that, by the definitions of $\mathcal{M}_0$ and 
	$B(\mathcal{M}_0,\delta_2)$, there exists $\ep_2>0$ such that
	\begin{equation}\label{Eq:delta2}
	\begin{split}
	\sup_{(t,t',s)\in (K\times K'\times L) \backslash B(\mathcal{M}_0,\delta_2)} &{\rm Var} ( [X(t)+Y(s)]/2 |\nabla X_{|K}(t), \nabla X_{|K'} (t'), \nabla Y_{|L}(s))\\
	&\leq \frac{1+R}{2} - \ep_2.
	\end{split}
	\end{equation}
	Similarly to \eqref{Eq:I2}, we obtain that $\int_{(K\times K'\times L) \backslash B(\mathcal{M}_0,\delta_2)}A(t,t',s)dtdt'ds$ is 
	super-exponentially small. It suffices to show below that $\int _{B(\mathcal{M}_0,\delta_2)} A(t,t',s)\,dtdt'ds$ is super-exponentially small.
	
	Due to $({\bf H}3)$ and Proposition \ref{prop:H3}, we can choose $\delta_2$ small enough such that for all $(t,t',s)\in B(\mathcal{M}_0,\delta_2)$,
	\begin{equation*}
	\begin{split}
	\La_{K\cup K'}(t,s)&:=-\E\{[X(t)+Y(s)]\nabla^2 X_{|K\cup K'}(t)\}=-(\E\{[X(t)+Y(s)] X_{ij}(t)\})_{i,j=1,\ldots, k+k'-m}
	\end{split}
	\end{equation*}
	are positive definite. Let $\{e_1, e_2, \ldots, e_N\}$ be the standard orthonormal basis of $\R^N$. For $t\in K$, $t'\in K'$ and $s\in L$, let $e_{t, t'}=(t'-t)/\|t'-t\|$ and $\alpha_i(t, t',s)= \l e_i, \La_{K\cup K'}(t,s)e_{t,t'}\r$. Then
	\begin{equation}\label{Eq:orthnormal basis decomp je}
	\La_{K\cup K'}(t,s)e_{t,t'}=\sum_{i=1}^N \l e_i, \La_{K\cup K'}(t,s)e_{t,t'}\r e_i = \sum_{i=1}^N  \alpha_i(t, t', s) e_i
	\end{equation}
	and there exists $\alpha_0 >0$ such that for all $(t,t',s)\in B(\mathcal{M}_0,\delta_2)$,
	\begin{equation}\label{Eq:posdef je}
	\l e_{t,t'}, \La_{K\cup K'}(t,s)e_{t,t'} \r \geq \alpha_0.
	\end{equation}
	Since all elements in $\ep(K)$ and $\ep(K')$ are 1, we may write
	\begin{equation*}
	\begin{split}
	t &= (t_1, \ldots, t_m, t_{m+1}, \ldots, t_k, b_{k+1}, \ldots, b_{k+k'-m}, 0, \ldots, 0),\\
	t' &= (t'_1, \ldots, t'_m, b_{m+1}, \ldots, b_k, t'_{k+1}, \ldots, t'_{k+k'-m}, 0, \ldots, 0),
	\end{split}
	\end{equation*}
	where $t_i \in (a_i, b_i)$ for $i\in \sigma(K)$ and $t'_j \in (a_j, b_j)$ for $j \in \sigma(K')$. Therefore,
	\begin{equation}\label{Eq:e_k je}
	\begin{split}
	\l e_i , e_{t,t'}\r &\geq 0, \quad \forall \ m+1 \leq i \leq k,\\
	\l e_i , e_{t,t'}\r &\leq 0, \quad \forall \ k+1\leq i \leq k+k'-m,\\
	\l e_i , e_{t,t'}\r &= 0, \quad \forall \ k+k'-m< i \leq N.
	\end{split}
	\end{equation}
	Let
	\begin{equation}\label{Eq:D_k je}
	\begin{split}
	D_i &= \{ (t,t',s)\in B(\mathcal{M}_0,\delta_2): \alpha_i (t,t',s)\geq \beta_i \}, \quad \text{if}\ m+1 \leq i \leq k,\\
	D_i &= \{ (t,t',s)\in B(\mathcal{M}_0,\delta_2): \alpha_i (t,t',s)\leq -\beta_i \}, \quad \text{if}\ k+1\leq i \leq k+k'-m,\\
	D_0 &= \bigg\{ (t,t',s)\in B(\mathcal{M}_0,\delta_2): \sum_{i=1}^m \alpha_i (t,t',s)\l e_i , e_{t,t'}\r\geq \beta_0\bigg \},
	\end{split}
	\end{equation}
	where $\beta_0, \beta_1,\ldots, \beta_{k+k'-m}$ are positive constants such that $\beta_0 + \sum_{i=m+1}^{k+k'-m} \beta_i < \alpha_0$.
	It follows from (\ref{Eq:e_k je}) and (\ref{Eq:D_k je}) that, if $(t,s)$ does not belong to any of $D_0, D_{m+1}, \ldots, D_{k+k'-m}$, then by (\ref{Eq:orthnormal basis decomp je}),
	$$\l\La_{K\cup K'}(t,s)e_{t,t'}, e_{t,t'}\r = \sum_{i=1}^N \alpha_i(t,t',s) \l e_i , e_{t,t'}\r\leq \beta_0 + \sum_{i=m+1}^{k+k'-m} \beta_i < \alpha_0,$$
	which contradicts (\ref{Eq:posdef je}). Thus $D_0\cup \cup_{i=m+1}^{k+k'-m} D_i$ is a covering of $B(\mathcal{M}_0,\delta_2)$. By (\ref{Eq:cross term je}),
	\begin{equation*}
	\begin{split}
	\E&\{M_u^E (X,K) M_u^E (X,K') M_u^E (Y,L)\} \leq \int_{D_0} A(t,t',s)\,dtdt'ds + \sum_{i=m+1}^{k+k'-m} \int_{D_i}A(t,t',s)\,dtdt'ds.
	\end{split}
	\end{equation*}
	By the Kac-Rice metatheorem and the fact $\mathbbm{1}_{\{X(t)\geq u, Y(s)\geq u\}}\le \mathbbm{1}_{\{[X(t)+ Y(s)]/2\geq u\}}$, we obtain
	\begin{equation}\label{Eq:D0}
	\begin{split}
	\int_{D_0}&A(t,t',s)\,dtdt'ds
	\leq \int_{D_0} dtdt'ds \int_u^\infty dx \, p_{\nabla X_{|K}(t), \nabla X_{|K'} (t'), \nabla Y_{|L}(s)}(0,0,0)\\
	&\times p_{[X(t)+Y(s)]/2}(x|\nabla X_{|K}(t)=0, \nabla X_{|K'} (t')=0, \nabla Y_{|L}(s)=0)\\
	&\times \E \big\{ |\text{det} \nabla^2 X_{|K}(t) ||\text{det} \nabla^2 X_{|K'}(t') ||\text{det} \nabla^2 Y_{|L}(s) | \big| [X(t)+Y(s)]/2=x, \\
	&\qquad \quad \nabla X_{|K}(t)=\nabla X_{|K'}(t')=\nabla Y_{|L}(s)=0 \big\},
	\end{split}
	\end{equation}
	and that for $i= m+1, \ldots, k$,
	\begin{equation}\label{Eq:Di}
	\begin{split}
	&\quad \int_{D_i}A(t,t',s)\,dtdt'ds\\
	&\le \int_{D_i} dtdt'ds \int_u^\infty dx \int_0^\infty dw_i \, \E\big \{ |\text{det} \nabla^2 X_{|K}(t)| |\text{det} \nabla^2 X_{|K'}(t')| |\text{det} \nabla^2 Y_{|L}(s)|  \big|\\
	&\quad [X(t)+Y(s)]/2=x, \nabla X_{|K}(t)=0, X_i(t')=w_i, \nabla X_{|K'} (t')=\nabla Y_{|L}(s)=0 \big\}\\
	&\quad \times p_{[X(t)+Y(s)]/2, \nabla X_{|K}(t), X_i(t'), \nabla X_{|K'} (t'),\nabla Y_{|L}(s)}(x,0,w_i,0,0).
	\end{split}
	\end{equation}
	Comparing \eqref{Eq:D0} and \eqref{Eq:Di} with Eqs. (4.33) and (4.36) respectively in the proof of Theorem 4.8 in 
	\citet{ChengXiao2014}, the only essential difference is the additional effect of $\nabla Y_{|L}(s)=0$, which however 
	will not affect the desired super-exponentially small estimation since $(X, Y)$ is nondegenerate under the condition 
	({\bf H}2). Therefore, following similar arguments therein, we obtain that $\int_{D_0}A(t,t',s)\,dtdt'ds$ and 
	$\int_{D_i} A(t,t',s)\,dtdt'ds$ ($i= m+1, \ldots, k$) are super-exponentially small.
	
	It is similar to show that $\int_{D_i} A(t,t',s)\,dtdt'ds$ are super-exponentially small for $i=k+1, \ldots, k+k'-m$. For the 
	case $k=0$ or $l=0$, the argument is even simpler when applying the Kac-Rice formula (see for example 
	\eqref{Eq:factorial 1}). Hence the details are omitted here. We have completed the proof. 
\end{proof}

Notice that, in the proof of Lemma \ref{Lem:cross terms je}, we have shown in \eqref{Eq:Mu3} that, 
if $\mathcal{M}_0 = \emptyset$, then $\E\{M_u (X,K) M_u (X,K') M_u (Y,L)\}$ is super-exponentially small. 
Under the boundary condition \eqref{Eq:boundary} below, which generalizes the condition $\mathcal{M}_0 
= \emptyset$ in terms of the correlation function $r(t,s)$, we have the following result.
\begin{lemma}\label{Lem:cross terms je2} Let $\{(X(t),Y(s)): t\in T, s\in S\}$ be an $\R^2$-valued, centered, 
unit-variance Gaussian vector field satisfying $({\bf H}1)$ and $({\bf H}2)$. If for any faces $K_1\subset T$ 
and $L_1\subset S$,
	\begin{equation}\label{Eq:boundary}
	\begin{split}
	\Big\{(t,s)\in K_1 \times L_1 :\, r(t,s)=R, \prod_{i\notin \sigma(K_1)}\frac{\partial r}{\partial t_i}(t,s)\prod_{j\notin \sigma(L_1)}\frac{\partial r}{\partial s_j}(t,s)=0\Big\} = \emptyset,
	\end{split}
	\end{equation} 
	then there exists a constant $\alpha>0$ such that as $u\to \infty$,
	\begin{equation*}
	\begin{split}
	\E\{M_u (X,K) M_u (X,K') M_u (Y,L)\} &= o\Big(\exp \Big\{ -\frac{u^2}{1+R} -\alpha u^2 \Big\}\Big),\\
	\E\{M_u (X,K) M_u (Y,L) M_u (Y,L')\} &= o\Big(\exp \Big\{ -\frac{u^2}{1+R} -\alpha u^2 \Big\}\Big),
	\end{split}
	\end{equation*}
	where $K$ and $K'$ are adjacent faces of $T$, and $L$ and $L'$ are adjacent faces of $S$.
\end{lemma}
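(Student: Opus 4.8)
The plan is to deduce the lemma from the machinery already built for Lemma~\ref{Lem:cross terms je}. The remark following that lemma records that the display \eqref{Eq:Mu3} yields, under the single hypothesis $\mathcal{M}_0=\emptyset$ (with $\mathcal{M}_0$ as in \eqref{Eq:M0}), the super-exponential smallness of $\E\{M_u(X,K)M_u(X,K')M_u(Y,L)\}$. Hence the entire task reduces to verifying that the boundary condition \eqref{Eq:boundary} forces $\mathcal{M}_0=\emptyset$ for every admissible triple of faces. By symmetry it suffices to treat the first line, and I would keep the labelling of Lemma~\ref{Lem:cross terms je}: $K\in\partial_kT$, $K'\in\partial_{k'}T$ with $I:=\bar K\cap\bar{K'}\in\partial_mT$, $\sigma(K)=\{1,\dots,k\}$, $\sigma(K')=\{1,\dots,m,k+1,\dots,k+k'-m\}$, and $L\in\partial_lS$ with $\sigma(L)=\{1,\dots,l\}$, so that $\sigma(I)=\sigma(K)\cap\sigma(K')=\{1,\dots,m\}$.

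The crux is a short combinatorial observation. Because $K\neq K'$ one cannot have $m=k=k'$ (that would force $I=K=K'$), so $k+k'>2m$ and the transverse index set $\{m+1,\dots,k+k'-m\}=(\sigma(K)\cup\sigma(K'))\setminus\sigma(I)$ is nonempty. Suppose for contradiction that $(t,s)\in\mathcal{M}_0$; then $r(t,s)=R$ and, by \eqref{Eq:M0}, $\tfrac{\partial r}{\partial t_i}(t,s)=\E\{X_i(t)Y(s)\}=0$ for every $i=1,\dots,k+k'-m$. I would apply \eqref{Eq:boundary} with $K_1=I$ (whose outward directions are $\{1,\dots,N\}\setminus\{1,\dots,m\}$) and with $L_1$ the open face of $\bar L$ whose relative interior contains $s$, so that $(t,s)\in K_1\times L_1$. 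Choosing any $i\in\{m+1,\dots,k+k'-m\}$ gives an outward index of $K_1$ with $\tfrac{\partial r}{\partial t_i}(t,s)=0$, so the product in \eqref{Eq:boundary} vanishes while $r(t,s)=R$; thus $(t,s)$ belongs to the set \eqref{Eq:boundary} declares empty, a contradiction. Hence $\mathcal{M}_0=\emptyset$.

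With $\mathcal{M}_0=\emptyset$ established, I would invoke \eqref{Eq:Mu3} verbatim. The bridge between $\mathcal{M}_0$ and the analysis is that, on the diagonal $t=t'\in I$, unit variance of $X$ gives $\E\{X(t)X_i(t)\}=0$, so the conditional variance ${\rm Var}([X(t)+Y(s)]/2\mid\nabla X_{|K}(t),\nabla X_{|K'}(t'),\nabla Y_{|L}(s))$ reaches its largest possible value $(1+R)/2$ exactly when $r(t,s)=R$ and the relevant first derivatives of $r$ all vanish, i.e.\ exactly on $\mathcal{M}_0$. As $\mathcal{M}_0=\emptyset$, uniform continuity of this conditional variance together with compactness of $I$ supply $\eps_1,\delta_1>0$ realizing the bound \eqref{Eq:super-exponentially small}; one then splits $K\times K'$ as in \eqref{Eq:Mu3}, handling the part away from $I$ by Lemma~\ref{Lem:cross terms disjoint sets} and the part near $I$ through \eqref{Eq:super-exponentially small}, which produces the factor $\exp\{-u^2/(1+R)-\alpha u^2\}$. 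The second line follows by interchanging $(X,T)$ with $(Y,S)$, and the cases $k=0$ or $l=0$ only simplify the Kac--Rice expression as in \eqref{Eq:factorial 1}. The hard part is not this implication but the uniform estimate \eqref{Eq:super-exponentially small} near the diagonal: its validity rests on the fact that the degeneracy locus meets the diagonal only on $\mathcal{M}_0$, while off the diagonal $t$ and $t'$ lie in distinct open faces and the nondegeneracy $({\bf H}2)$ keeps the conditional variance strictly below $(1+R)/2$. Since this is exactly the estimate already secured inside the proof of Lemma~\ref{Lem:cross terms je}, no additional analytic work is required here.
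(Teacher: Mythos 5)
Your proposal is correct and is essentially the paper's intended argument: the paper itself gives no written proof, but the paragraph preceding the lemma and the remark after it make clear that the proof is exactly ``\eqref{Eq:boundary} implies $\mathcal{M}_0=\emptyset$, then apply Case (i) of the proof of Lemma \ref{Lem:cross terms je} via \eqref{Eq:Mu3}'' — you have simply written out the combinatorial verification that the paper asserts. One cosmetic point: since the faces of the paper are relatively open and $I=\bar K\cap\bar{K'}$ is closed, for $t$ on the relative boundary of $I$ you should take $K_1$ to be the open face whose relative interior contains $t$ (its $\sigma(K_1)$ is then a subset of $\{1,\dots,m\}$, so the transverse indices $\{m+1,\dots,k+k'-m\}$ still lie outside $\sigma(K_1)$ and your contradiction goes through unchanged).
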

\begin{remark}
	In other words, the boundary condition \eqref{Eq:boundary} indicates that, for any point $(t,s)\in K_1 \times L_1$ attaining the maximum of correlation $R$, there must be $\frac{\partial r}{\partial t_i}(t,s)\neq 0$ for all $i\notin \sigma(K_1)$ and $\frac{\partial r}{\partial s_j}(t,s)\neq 0$ for all $j\notin \sigma(L_1)$. In particular, as an important property, we have that the boundary condition \eqref{Eq:boundary} implies the condition $({\bf H}3)$, as well as $\mathcal{M}_0=\emptyset$, where $\mathcal{M}_0$ defined in \eqref{Eq:M0}.
\end{remark}


\section{Estimation of the difference between EEC and the upper bound}\label{sec:diff}
In this section, we shall show that the difference between the expected number of extended outward local maxima, i.e. the upper bound in \eqref{Ineq:upperbound je}, and the expected Euler characteristic of the excursion set is super-exponentially small.
\begin{proposition}\label{Prop:simlify the high moment je} Let $\{(X(t),Y(s)): t\in T, s\in S\}$ be an $\R^2$-valued, centered, unit-variance Gaussian vector field satisfying $({\bf H}1)$, $({\bf H}2)$ and $({\bf H}3)$. Then there exists $\alpha>0$ such that for any $K\in \partial_k T$ and $L\in \partial_l S$ with $k, l\ge 0$, as $u\to \infty$,
	\begin{equation}\label{Eq:simplify-major-term}
	\begin{split}
	&\quad \E \{M_u^E (X,K)M_u^E (Y,L)\} \\
	&=(-1)^{k+l}\int_K\int_L \E\big\{{\rm det}\nabla^2 X_{|K}(t){\rm det}\nabla^2 Y_{|L}(s)\mathbbm{1}_{\{X(t)\geq u, \ \varepsilon^*_\ell X_\ell(t) \geq 0 \ {\rm for \ all}\ \ell\notin \sigma(K)\}}\\
	&\quad \times \mathbbm{1}_{\{Y(s)\geq u, \ \varepsilon^*_\ell Y_\ell(s) \geq 0 \ {\rm for \ all}\ \ell\notin \sigma(L)\}} \big| \nabla X_{|K}(t)=\nabla Y_{|L}(s)=0\big\}\\
	&\quad \times p_{\nabla X_{|K}(t), \nabla Y_{|L}(s)}(0,0) dtds +o\left( \exp \left\{ -\frac{u^2}{1+R} -\alpha u^2 \right\}\right)\\
	&= (-1)^{k+l} \E\bigg\{\bigg(\sum^k_{i=0} (-1)^i \mu_i(X,K)\bigg)\bigg(\sum^l_{j=0} (-1)^j \mu_j(Y,L)\bigg)\bigg\} + o\left( \exp \left\{ -\frac{u^2}{1+R} -\alpha u^2 \right\}\right).
	\end{split}
	\end{equation}
\end{proposition}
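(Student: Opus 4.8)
The plan is to express both members of \eqref{Eq:simplify-major-term} through the Kac--Rice metatheorem, to observe that the second equality is exact, and to reduce the first equality to showing that the expected number of pairs of \emph{non-maximal} critical points is super-exponentially small; that last estimate is the substance of the argument.

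I begin with the elementary identity that for any nondegenerate symmetric matrix $B$ one has $(-1)^{{\rm index}(B)}=\mathrm{sign}(\det B)$, so that $(-1)^{{\rm index}(B)}|\det B|=\det B$. Under $({\bf H}1)$ and $({\bf H}2)$ the Hessians are almost surely nondegenerate at the critical points in question, so applying the Kac--Rice metatheorem \cite{Adler:2007} to the product $\big(\sum_{i=0}^k(-1)^i\mu_i(X,K)\big)\big(\sum_{j=0}^l(-1)^j\mu_j(Y,L)\big)$ and invoking this identity in each factor converts the corresponding expectation into exactly the double integral of $\det\nabla^2 X_{|K}(t)\,\det\nabla^2 Y_{|L}(s)$ (against the indicators and the density $p_{\nabla X_{|K}(t),\nabla Y_{|L}(s)}(0,0)$) that constitutes the middle member of \eqref{Eq:simplify-major-term}. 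Thus the second equality holds with no error term.

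For the first equality I use that $M_u^E(X,K)=\mu_k(X,K)$ and $M_u^E(Y,L)=\mu_l(Y,L)$. Expanding the product of alternating sums and isolating the $(i,j)=(k,l)$ term gives
\begin{equation*}
\begin{split}
&(-1)^{k+l}\E\Big\{\Big(\sum_{i=0}^k(-1)^i\mu_i(X,K)\Big)\Big(\sum_{j=0}^l(-1)^j\mu_j(Y,L)\Big)\Big\}\\
&\qquad =\E\{M_u^E(X,K)M_u^E(Y,L)\}+\sum_{(i,j)\neq(k,l)}(-1)^{k-i+l-j}\,\E\{\mu_i(X,K)\mu_j(Y,L)\}.
\end{split}
\end{equation*}
Since the sum on the right is finite, the whole proposition reduces to proving that $\E\{\mu_i(X,K)\mu_j(Y,L)\}$ is super-exponentially small whenever $(i,j)\neq(k,l)$. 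By symmetry it suffices to treat $i<k$, in which case $\nabla^2 X_{|K}(t)$ has at least one positive eigenvalue at every point counted by $\mu_i(X,K)$.

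The main work, and the principal obstacle, is this last estimate, which I would carry out along the lines of the proof of Lemma~\ref{Lem:cross terms je}. Writing $\E\{\mu_i(X,K)\mu_j(Y,L)\}$ via Kac--Rice and using $\mathbbm{1}_{\{X(t)\geq u,\,Y(s)\geq u\}}\le\mathbbm{1}_{\{[X(t)+Y(s)]/2\geq u\}}$, I split $K\times L$ into a small neighborhood of the set $\mathcal{M}_0$ of \eqref{Eq:M0} (where $r(t,s)=R$ and the relevant first derivatives of $r$ vanish) and its complement. On the complement the conditional variance ${\rm Var}([X(t)+Y(s)]/2\mid\nabla X_{|K}(t)=\nabla Y_{|L}(s)=0)$ is bounded away from $(1+R)/2$, just as in \eqref{Eq:super-exponentially small}, so the exceedance alone yields super-exponential smallness. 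On the neighborhood I exploit the index constraint: conditioning on $[X(t)+Y(s)]/2=x$ and on the two gradients vanishing, the conditional mean of $\nabla^2 X_{|K}(t)$ is asymptotic to $-cx\,\La_K(t,s)$ for some $c>0$ as $x\to\infty$, where $\La_K(t,s):=-\E\{[X(t)+Y(s)]\nabla^2 X_{|K}(t)\}$ is positive definite near $\mathcal{M}_0$ by the same reasoning as for $\La_{K\cup K'}$ in Lemma~\ref{Lem:cross terms je}: indeed $\E\{X(t)\nabla^2 X_{|K}(t)\}=-{\rm Cov}(\nabla X_{|K}(t))\prec0$, while $\E\{Y(s)\nabla^2 X_{|K}(t)\}\preceq0$ on $\mathcal{M}_0$ by Proposition~\ref{prop:H3}. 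Hence a positive eigenvalue of $\nabla^2 X_{|K}(t)$ (forced by $i<k$) requires a deviation of order $x$ from a strictly negative-definite mean, which costs a Gaussian factor $e^{-\alpha x^2}$; covering the candidate directions by finitely many as in \eqref{Eq:D_k je}--\eqref{Eq:Di} and integrating $\mathrm{poly}(x)\,e^{-\alpha x^2}$ against the density of $[X(t)+Y(s)]/2$ over $x\ge u$ gives the bound $o(e^{-u^2/(1+R)-\alpha u^2})$. This is the same mechanism as in Eqs.~(4.33) and (4.36) of \citet{ChengXiao2014}, the only new feature being the extra conditioning on $\nabla Y_{|L}(s)=0$, which is harmless since $(X,Y)$ remains nondegenerate under $({\bf H}2)$. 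Combining the two regions establishes the key estimate, and hence the proposition.
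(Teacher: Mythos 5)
Your proof is correct and rests on the same core mechanism as the paper's: localize to a neighborhood of the set where $r(t,s)=R$ and the cross-covariances $\E\{Y(s)\nabla X_{|K}(t)\}$, $\E\{X(t)\nabla Y_{|L}(s)\}$ vanish (the paper's $\mathcal{M}_2$ in \eqref{Eq:M2}, not $\mathcal{M}_0$, but this is only a labelling slip), use $({\bf H}3)$ via Proposition \ref{prop:H3} to make $-\E\{[X(t)+Y(s)]\nabla^2 X_{|K}(t)\}$ positive definite there, and conclude that an index defect forces a Gaussian deviation of order $u$ in the Hessian. The organization differs in two ways that are worth recording. First, the paper starts from the Kac--Rice representation of $\E\{M_u^E(X,K)M_u^E(Y,L)\}$ and shows that the indicators $\mathbbm{1}_{\{\nabla^2 X_{|K}(t)\prec 0\}}$, $\mathbbm{1}_{\{\nabla^2 Y_{|L}(s)\prec 0\}}$ can be deleted; you instead expand the alternating sum, isolate $\E\{\mu_k(X,K)\mu_l(Y,L)\}=\E\{M_u^E(X,K)M_u^E(Y,L)\}$, and kill each cross term $\E\{\mu_i(X,K)\mu_j(Y,L)\}$, $(i,j)\neq(k,l)$. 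These are equivalent reductions, since the difference between the two sides of the first equality is exactly the signed sum of those cross terms, so nothing is lost. Second, and more substantively, the paper conditions on $(X(t),Y(s))=(x,y)$ and must therefore split $[u,\infty)^2$ into the cone $(\ep_0+R)x<y<(\ep_0+R)^{-1}x$ and its complement (the decomposition \eqref{Eq:A}) to keep the conditional mean of the Hessian negative definite; you condition only on $[X(t)+Y(s)]/2=x\ge u$, for which the conditional mean near $\mathcal{M}_2$ is $-(1+R)^{-1}x\,\La_K(t,s)(1+o(1))\prec 0$ outright, so the cone decomposition is unnecessary. This is legitimate here because you only need an upper bound on the bad event, and it yields a slightly shorter argument; the paper's two-variable conditioning is the one reused in Section 8 to extract explicit asymptotics, which is presumably why it is set up that way. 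One small point to make explicit when writing this up: the bound $\E\{|\det\nabla^2 X_{|K}(t)||\det\nabla^2 Y_{|L}(s)|\mathbbm{1}_{\{\|\Delta\|\ge cu\}}\mid\cdot\}\le \mathrm{poly}(x)e^{-\alpha u^2}$ needs a Cauchy--Schwarz step, since the determinants themselves have conditional means growing polynomially in $x$; this is routine and matches \eqref{Eq:disjoint faces 2}.
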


\begin{proof} The second equality in \eqref{Eq:simplify-major-term} follows from the application of the Kac-Rice theorem below:
	\begin{equation*}
		\begin{split}
			&\quad \E\bigg\{\bigg(\sum^k_{i=0} (-1)^i \mu_i(X,K)\bigg)\bigg(\sum^l_{j=0} (-1)^j \mu_j(Y,L)\bigg)\bigg\} \\
			&=\sum^k_{i=0} (-1)^i\sum^l_{j=0} (-1)^j\int_K\int_L dtds\, p_{\nabla X_{|K}(t), \nabla Y_{|L}(s)}(0,0)  \\
			&\quad \times \E\big\{|{\rm det}\nabla^2 X_{|K}(t)||{\rm det}\nabla^2 Y_{|L}(s)|\mathbbm{1}_{\{\text{index} (\nabla^2 X_{|K}(t))=i\}} \mathbbm{1}_{\{\text{index} (\nabla^2 X_{|L}(t))=j\}}  \\
			&\quad \times \mathbbm{1}_{\{X(t)\geq u, \ \varepsilon^*_\ell X_\ell(t) \geq 0 \ {\rm for \ all}\ \ell\notin \sigma(K)\}}\mathbbm{1}_{\{Y(s)\geq u, \ \varepsilon^*_\ell Y_\ell(s) \geq 0 \ {\rm for \ all}\ \ell\notin \sigma(L)\}} \big| \nabla X_{|K}(t)=\nabla Y_{|L}(s)=0\big\}\\
			 &=\int_K\int_L dtds\, p_{\nabla X_{|K}(t), \nabla Y_{|L}(s)}(0,0)  \E\big\{{\rm det}\nabla^2 X_{|K}(t){\rm det}\nabla^2 Y_{|L}(s)\\
			 &\quad \times \mathbbm{1}_{\{X(t)\geq u, \ \varepsilon^*_\ell X_\ell(t) \geq 0 \ {\rm for \ all}\ \ell\notin \sigma(K)\}}\mathbbm{1}_{\{Y(s)\geq u, \ \varepsilon^*_\ell Y_\ell(s) \geq 0 \ {\rm for \ all}\ \ell\notin \sigma(L)\}} \big| \nabla X_{|K}(t)=\nabla Y_{|L}(s)=0\big\},
		\end{split}
	\end{equation*}
where the last step is due to the fact $|{\rm det}\nabla^2 X_{|K}(t)|\mathbbm{1}_{\{\text{index} (\nabla^2 X_{|K}(t))=i\}}=(-1)^i{\rm det}\nabla^2 X_{|K}(t)$.
	
	To prove the first approximation in \eqref{Eq:simplify-major-term} and address the main idea, we first deal with a special case 
	when the two faces are both the interiors and then prove the general cases.
	
	\textbf{Case (i):  $\bm{k=l=N}$.} By the Kac-Rice metatheorem,
	\begin{equation*}
	\begin{split}
	&\quad \E \{M_u^E (X,K)M_u^E (Y,L)\}\\
	&=\int_K\int_L p_{\nabla X(t), \nabla Y(s)}(0,0) dtds \int_u^\infty \int_u^\infty dxdy\, p_{X(t),Y(s)}(x,y|\nabla X(t)=\nabla Y(s)=0)\\
	&\times \E\big\{{\rm det}\nabla^2 X(t){\rm det}\nabla^2 Y(s)\mathbbm{1}_{\{\nabla^2 X(t)\prec 0\}} \mathbbm{1}_{\{\nabla^2 Y(s)\prec 0\}} \big| X(t)=x, Y(s)=y, \nabla X(t)=\nabla Y(s)=0\big\}\\
	&:=\int_K\int_L p_{\nabla X(t), \nabla Y(s)}(0,0) dtds \int_u^\infty \int_u^\infty A(t,s,x,y)dxdy.
	\end{split}
	\end{equation*}
	Let
	\begin{equation}\label{Eq:O-Udelta}
	\begin{split}
	\mathcal{M}_1&=\{(t,s)\in \bar{K}\times \bar{L}: r(t,s)=R, \ \E\{X(t)\nabla Y(s)\}= \E\{Y(s)\nabla X(t)\}=0 \},\\
	B(\mathcal{M}_1, \delta_1)&=\left\{(t,s)\in K\times L: d\left((t,s), \mathcal{M}_1\right)\le\delta_1\right \},
	\end{split}
	\end{equation}
	where $\delta_1$ is a small positive number to be specified. Then, we only need to estimate
	\begin{equation}\label{Eq:A1}
		\begin{split}
	\int_{B(\mathcal{M}_1, \delta_1)} p_{\nabla X(t), \nabla Y(s)}(0,0) dtds \int_u^\infty \int_u^\infty A(t,s,x,y)dxdy,
\end{split}
\end{equation}
	since the integral above with $B(\mathcal{M}_1, \delta_1)$ replaced by $(K\times L)\backslash B(\mathcal{M}_1, \delta_1)$ 
	is super-exponentially small due to the fact 
	\[
	\sup_{(t,s)\in (K\times L)\backslash B(\mathcal{M}_1, \delta_1)} {\rm Var}([X(t)+ Y(s)]/2 | \nabla X(t)=\nabla Y(s)=0) < \frac{1+R}{2}.
	\]
	Notice that, for all $(t,s)\in \mathcal{M}_1$, $\E\{X(t)\nabla^2 X(t)\} \prec 0$ and $\E\{Y(s)\nabla^2 Y(s)\}\prec 0$ since $X(t)$ 
	and $Y(s)$ have unit-variance; and by $({\bf H}3)$ and Proposition \ref{prop:H3}, $\E\{X(t)\nabla^2 Y(s)\}\preceq 0$ and 
	$\E\{Y(s)\nabla^2 X(t)\}\preceq 0$. Thus there exists $\delta_1$ small enough such that $\E\{[X(t)+Y(s)] \nabla^2 Y(s)\}\prec 0$ 
	and $\E\{[X(t)+Y(s)] \nabla^2 X(t)\}\prec 0$ for all $(t,s)\in B(\mathcal{M}_1, \delta_1)$. In particular, let $\la_0$ be the largest 
        eigenvalue of $\E\{[X(t)+Y(s)] \nabla^2 X(t)\}$ over $B(\mathcal{M}_1, \delta_1)$, then $\la_0<0$ by the uniform continuity. 
	Also note that both $\E\{X(t)\nabla Y(s)\}$ and $\E\{Y(s)\nabla X(t)\}$ tend to 0 as $\delta_1\to 0$. Therefore, as $\delta_1\to 0$,
	\begin{equation}\label{Eq:CondHessianX}
	\begin{split}
	&\quad \E\{X_{ij}(t)|X(t)=x, Y(s)=y, \nabla X(t)=\nabla Y(s)=0\}\\
	&=(\E\{X_{ij}(t)X(t)\}, \E\{X_{ij}(t)Y(s)\}, \E\{X_{ij}(t)X_1(t)\}, \ldots, \E\{X_{ij}(t)X_N(t)\}, \\
	&\qquad  \E\{X_{ij}(t)Y_1(s)\}, \ldots, \E\{X_{ij}(t)Y_N(s)\})\left[ {\rm Cov}(X(t), Y(s), \nabla X(t), \nabla Y(s)) \right]^{-1} \\
	&\quad \cdot (x, y, 0, \ldots, 0, 0, \ldots, 0)^T\\
	&=(1+o(1))(\E\{X_{ij}(t)X(t)\}, \E\{X_{ij}(t)Y(s)\}) \left( \begin{array}{cc}
	1 & R \\
	R & 1 \end{array} \right)^{-1} \left( \begin{array}{c}
	x \\
	y \end{array} \right)\\
	&=(1+o(1))\frac{\E\{X_{ij}(t)X(t)\}[x-R y] + \E\{X_{ij}(t)Y(s)\}[y-R x]}{1-R^2};
	\end{split}
	\end{equation}
	and similarly,
	\begin{equation}\label{Eq:CondHessianY}
	\begin{split}
	&\quad \E\{Y_{ij}(s)|X(t)=x, Y(s)=y, \nabla X(t)=\nabla Y(s)=0\}\\
	&=(1+o(1))\frac{\E\{Y_{ij}(s)X(t)\}[x-R y] + \E\{Y_{ij}(s)Y(s)\}[y-R x]}{1-R^2}.
	\end{split}
	\end{equation}
	 By \eqref{Eq:CondHessianX} and \eqref{Eq:CondHessianY}, there exists $0<\ep_0<1-R$ such that for $\delta_1$ 
	 small enough and all $(x,y)\in [u,\infty)^2$ with $(\ep_0+R) x<y<(\ep_0+R)^{-1}x$ (so that $x-R y\ge \ep_0 u$ and $y-R x\ge \ep_0 u$),
	\begin{equation*}
	\begin{split}
	\Sigma_1(t,s,x,y) &:=\E\{\nabla^2 X(t)|X(t)=x, Y(s)=y, \nabla X(t)=\nabla Y(s)=0\}\prec 0 \text{ and}\\
	\Sigma_2(t,s,x,y) &:=\E\{\nabla^2 Y(s)|X(t)=x, Y(s)=y, \nabla X(t)=\nabla Y(s)=0\} \prec 0.
	\end{split}
	\end{equation*}
	Let $\Delta_1(t,s,x,y)=\nabla^2 X(t) - \Sigma_1(t,s,x,y)$ and $\Delta_2(t,s,x,y)=\nabla^2 Y(s) - \Sigma_2(t,s,x,y)$. 
	Due to the following decomposition,
	\begin{equation*}
		\begin{split}
	\{u\le x,y<\infty\} &= \{x\ge u, \, y\ge (\ep_0+R)^{-1}x\} \cup \{y\ge u, \, x\le (\ep_0+R)^{-1}y\}\\
	& \quad \cup \{x\ge u, \, u\vee (\ep_0+R) x< y< (\ep_0+R)^{-1}x\},
\end{split}
\end{equation*}
	we can write
	\begin{equation}\label{Eq:A}
	\begin{split}
	\int_u^\infty \int_u^\infty A(t,s,x,y)dxdy &=\int_u^\infty dx\int_{(\ep_0+R)^{-1}x}^\infty A(t,s,x,y)dy+ \int_u^\infty dy\int_{(\ep_0+R)^{-1}y}^\infty A(t,s,x,y)dx\\
	& \quad + \int_u^\infty dx\int_{u\vee (\ep_0+R) x}^{(\ep_0+R)^{-1}y} A(t,s,x,y)dx,
	\end{split}
	\end{equation}
	where the first two integrals on right are super-exponentially small since $(\ep_0+R)^{-1}>1$ and 
	\[
	\mathbbm{1}_{\{X(t)\ge u, Y(s)\ge (\ep_0+R)^{-1}X(t)\}} \vee \mathbbm{1}_{\{Y(s)\ge u, X(t)\ge (\ep_0+R)^{-1}Y(s)\}} 
	\le \mathbbm{1}_{\{[X(t)+Y(s)]/2\ge [1+(\ep_0+R)^{-1}]u/2\}}.
	\]
For the last integral in \eqref{Eq:A}, we have
	\begin{equation}\label{Eq:int_A}
	\begin{split}
	&\quad \int_u^\infty dx\int_{u\vee (\ep_0+R) x}^{(\ep_0+R)^{-1}x} A(t,s,x,y)dy\\
	&=\int_u^\infty dx\int_{u\vee (\ep_0+R) x}^{(\ep_0+R)^{-1}x} dy\, p_{X(t),Y(s)}(x,y|\nabla X(t)=\nabla Y(s)=0)\\
	&\times \E\big\{{\rm det}(\Delta_1(t,s,x,y)+ \Sigma_1(t,s,x,y)){\rm det}(\Delta_2(t,s,x,y)+ \Sigma_2(t,s,x,y))  \mathbbm{1}_{\{\Delta_1(t,s,x,y)+ \Sigma_1(t,s,x,y)\prec 0\}}\\
	&\times \mathbbm{1}_{\{\Delta_2(t,s,x,y)+ \Sigma_2(t,s,x,y)\prec 0\}}\big| X(t)=x, Y(s)=y, \nabla X(t)=\nabla Y(s)=0\big\}\\
	&:=\int_u^\infty dx\int_{u\vee (\ep_0+R) x}^{(\ep_0+R)^{-1}x} dy\, p_{X(t),Y(s)}(x,y|\nabla X(t)=\nabla Y(s)=0)E(t,s,x,y).
	\end{split}
	\end{equation}
Note that the following are two centered Gaussian random matrices (free of $x$ and $y$):
\begin{equation*}
	\begin{split}
		\Omega^X(t,s)&=(\Omega_{ij}^X(t,s))_{1\le i,j\le N}=(\Delta_1(t,s,x,y) | X(t)=x, Y(s)=y, \nabla X(t)=\nabla Y(s)=0),\\
		\Omega^Y(t,s)&=(\Omega_{ij}^Y(t,s))_{1\le i,j\le N}=(\Delta_2(t,s,x,y) | X(t)=x, Y(s)=y, \nabla X(t)=\nabla Y(s)=0).
	\end{split}
\end{equation*}
Denote the density of the Gaussian vector $((\Omega_{ij}^X(t,s))_{1\le i\le j\le N}, (\Omega_{ij}^Y(t,s))_{1\le i\le j\le N})$ by $h_{t,s}(v,w)$, where $v=(v_{ij})_{1\le i\le j\le N}$, $w=(w_{ij})_{1\le i\le j\le N}\in \R^{N(N+1)/2}$. Then
\begin{equation}\label{Eq:E(t,s,xy)}
	\begin{split}
E(t,s,x,y) &= \E\big\{{\rm det}(\Omega^X(t,s)+ \Sigma_1(t,s,x,y)){\rm det}(\Omega^Y(t,s)+ \Sigma_2(t,s,x,y))  \\
&\quad \times \mathbbm{1}_{\{\Omega^X(t,s)+ \Sigma_1(t,s,x,y)\prec 0\}} \mathbbm{1}_{\{\Omega^Y(t,s)+ \Sigma_2(t,s,x,y)\prec 0\}}\big\}\\
&= \int_{v:\, (v_{ij})+\Sigma_1(t,s,x,y)\prec 0} \int_{w:\, (w_{ij})+\Sigma_2(t,s,x,y)\prec 0} {\rm det}((v_{ij})+ \Sigma_1(t,s,x,y))\\
&\quad \times {\rm det}((w_{ij})+ \Sigma_2(t,s,x,y))h_{t,s}(v,w)dvdw,
\end{split}
\end{equation}
where $(v_{ij})$ and $(w_{ij})$ are respectively the abbreviations of the matrices $v=(v_{ij})_{1\le i, j\le N}$ and $w=(w_{ij})_{1\le i, j\le N}$. 
Recall that $x\wedge y\ge u$ and $(\ep_0+R) x<y<(\ep_0+R)^{-1}x$ implies $x-R y\ge \ep_0 u$ and $y-R x\ge \ep_0 u$. By 
\eqref{Eq:CondHessianX}, there exists a constant $0<c<-\la_0\ep_0/(1-R^2)$ such that for $\delta_1$ small enough and all 
$(t,s)\in B(\mathcal{M}_1, \delta_1)$, $x\ge u$ and $u\vee (\ep_0+R) x<y<(\ep_0+R)^{-1}x$,
\[
(v_{ij})+ \Sigma_1(t,s,x,y) \prec 0, \quad \forall \|(v_{ij})\|:=\Big(\sum_{i,j=1}^N v_{ij}^2\Big)^{1/2}<cu.
\]
Thus $\{v:\, (v_{ij})+\Sigma_1(t,s,x,y)\not\prec 0\} \subset \{v:\, \|(v_{ij})\|\ge cu\}$. This implies that the last integral in 
\eqref{Eq:E(t,s,xy)} with the integration domain replaced by $\{(v, w):\, (v_{ij})+\Sigma_1(t,s,x,y)\not\prec 0,\, w\in 
\R^{N(N+1)/2}\}$ is $o(e^{-\alpha'u^2})$ uniformly for all $(t,s)\in B(\mathcal{M}_1, \delta_1)$, where $\alpha'$ is 
a positive constant. The same result holds when replacing the integration domain by $\{(v, w):\, v\in \R^{N(N+1)/2}, 
\, (w_{ij})+\Sigma_2(t,s,x,y)\not\prec 0\}$. Therefore, we have that, uniformly for all $(t,s)\in B(\mathcal{M}_1, \delta_1)$, 
$x\ge u$ and $u\vee (\ep_0+R) x<y<(\ep_0+R)^{-1}x$,
\begin{equation*}
	\begin{split}
		E(t,s,x,y)& = \int_{\R^{N(N+1)/2}} \int_{\R^{N(N+1)/2}} {\rm det}((v_{ij})+ \Sigma_1(t,s,x,y))\\
		&\quad \times {\rm det}((w_{ij})+ \Sigma_2(t,s,x,y))h_{t,s}(v,w)dvdw + o(e^{-\alpha'u^2}).
	\end{split}
\end{equation*}
Plugging this into \eqref{Eq:int_A} and \eqref{Eq:A}, we obtain that the indicator functions $\mathbbm{1}_{\{\nabla^2 X(t)\prec 0\}}$ 
and $\mathbbm{1}_{\{\nabla^2 Y(s)\prec 0\}}$ in \eqref{Eq:A1} can be removed, causing only a super-exponentially small error. 
Therefore, there exists $\alpha>0$ such that for $u$ large enough,
	\begin{equation*}
	\begin{split}
	&\quad \E \{M_u^E (X,K)M_u^E (Y,L)\} \\
	&=\int_K\int_L p_{\nabla X(t), \nabla Y(s)}(0,0) dtds \int_u^\infty \int_u^\infty p_{X(t),Y(s)}(x,y|\nabla X(t)=\nabla Y(s)=0)\\
	&\quad \times  \E\{{\rm det}\nabla^2 X(t){\rm det}\nabla^2 Y(s)|X(t)=x, Y(s)=y, \nabla X(t)=\nabla Y(s)=0\} dxdy\\
	&\quad +o\Big( \exp \Big\{ -\frac{u^2}{1+R} -\alpha u^2 \Big\}\Big).
	\end{split}
	\end{equation*}

\textbf{Case (ii):  $\bm{k,l\ge 0}$.} Note that, if $k=0$ or $l=0$, then by the Kac-Rice formula, the terms in \eqref{Eq:simplify-major-term} 
involving the Hessian will vanish, making the proof easier. Therefore, without loss of generality, let $k, l\ge 1$, $\sigma(K)= \{1, \cdots, k\}$, 
$\sigma(L)= \{1, \cdots, l\}$ and assume all the elements in $\ep(K)$ and $\ep(L)$ are 1. By the Kac-Rice metatheorem,
	\begin{equation*}
	\begin{split}
	&\quad \E \{M_u^E (X,K)M_u^E (Y,L)\}\\
	&=(-1)^{k+l}\int_K\int_L p_{\nabla X_{|K}(t), \nabla Y_{|L}(s)}(0,0) dtds \int_u^\infty \int_u^\infty p_{X(t),Y(s)}\big(x,y\big |\nabla X_{|K}(t)=\nabla Y_{|L}(s)=0\big)\\
	&\quad  \E\big\{{\rm det}\nabla^2 X_{|K}(t){\rm det}\nabla^2 Y_{|L}(s)  \mathbbm{1}_{\{\nabla^2 X_{|K}(t)\prec 0\}} \mathbbm{1}_{\{\nabla^2 Y_{|L}(s)\prec 0\}}\mathbbm{1}_{\{X_{k+1}(t)>0,\ldots, X_N(t)>0\}}\\
	&\quad \times \mathbbm{1}_{\{Y_{l+1}(s)>0,\ldots, Y_N(s)>0\}} \big| X(t)=x, Y(s)=y, \nabla X_{|K}(t)=\nabla Y_{|L}(s)=0\big\}dxdy\\
	&:=(-1)^{k+l}\int_K\int_L p_{\nabla X_{|K}(t), \nabla Y_{|L}(s)}(0,0) dtds \int_u^\infty \int_u^\infty A'(t,s,x,y)dxdy.
	\end{split}
	\end{equation*}
Let
\begin{equation}\label{Eq:M2}
	\begin{split}
		\mathcal{M}_2&=\{(t,s)\in \bar{K}\times \bar{L}: r(t,s)=R, \ \E\{X(t)\nabla Y_{|L}(s)\}= \E\{Y(s)\nabla X_{|K}(t)\}=0 \},\\
		B(\mathcal{M}_2, \delta_2)&=\left\{(t,s)\in K\times L: d\left((t,s), \mathcal{M}_2\right)\le\delta_2\right \},
	\end{split}
\end{equation}
where $\delta_2$ is a small positive number to be specified. Then, we only need to estimate
\begin{equation}\label{Eq:A'}
	\begin{split}
\int_{B(\mathcal{M}_2, \delta_2)} p_{\nabla X_{|K}(t), \nabla Y_{|L}(s)}(0,0) dtds \int_u^\infty \int_u^\infty A'(t,s,x,y)dxdy,
	\end{split}
\end{equation}
since the integral above with $B(\mathcal{M}_2, \delta_2)$ replaced by $(K\times L)\backslash B(\mathcal{M}_2, \delta_2)$ 
is super-exponentially small due to the fact 
\[
\sup_{(t,s)\in (K\times L)\backslash B(\mathcal{M}_2, \delta_2)} {\rm Var}([X(t)+ Y(s)]/2 | \nabla X(t)=\nabla Y(s)=0) < \frac{1+R}{2}.
\]
On the other hand, following similar arguments in the proof for Case (i), we verify that removing the indicator functions 
$\mathbbm{1}_{\{\nabla^2 X_{|K}(t)\prec 0\}}$ and $\mathbbm{1}_{\{\nabla^2 Y_{|L}(s)\prec 0\}}$ in \eqref{Eq:A'} will  
only cause a super-exponentially small error. Combining these results, we have shown that the first approximation 
in \eqref{Eq:simplify-major-term} holds, completing the proof.	
\end{proof}

From the proof of Proposition \ref{Prop:simlify the high moment je}, we see that the same arguments can be applied 
to $\E \{M_u (X,K)M_u (Y,L)\}$, yielding the following result.
\begin{proposition}\label{Prop:simlify the high moment je2} Let $\{(X(t),Y(s)): t\in T, s\in S\}$ be an $\R^2$-valued, 
centered, unit-variance Gaussian vector field satisfying $({\bf H}1)$, $({\bf H}2)$ and $({\bf H}3 )$. Then there exists 
a constant $\alpha>0$ such that for any $K\in \partial_k T$ and $L\in \partial_l S$, as $u\to \infty$,
	\begin{equation*}
	\begin{split}
	&\quad \E \{M_u (X,K)M_u (Y,L)\} \\
	&=(-1)^{k+l}\int_K\int_L \E\big\{{\rm det}\nabla^2 X_{|K}(t){\rm det}\nabla^2 Y_{|L}(s)\mathbbm{1}_{\{X(t)\geq u, \ Y(s)\geq u\}}\big|\nabla X_{|K}(t)=\nabla Y_{|L}(s)=0\big\}\\
	&\quad  \times p_{\nabla X_{|K}(t), \nabla Y_{|L}(s)}(0,0) dtds +o\left( \exp \left\{ -\frac{u^2}{1+R} -\alpha u^2 \right\}\right)\\
	&=(-1)^{k+l} \E\bigg\{\bigg(\sum^k_{i=0} (-1)^i \widetilde{\mu}_i(X,K)\bigg)\bigg(\sum^l_{j=0} (-1)^j \widetilde{\mu}_j(Y,L)\bigg)\bigg\} + o\left( \exp \left\{ -\frac{u^2}{1+R} -\alpha u^2 \right\}\right).
	\end{split}
	\end{equation*}
\end{proposition}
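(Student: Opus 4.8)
The plan is to reproduce the argument of Proposition \ref{Prop:simlify the high moment je} essentially verbatim, the only change being that the extended-outward indicator constraints $\varepsilon^*_\ell X_\ell(t)\ge 0$ and $\varepsilon^*_\ell Y_\ell(s)\ge 0$ are dropped everywhere. Indeed, $M_u(X,K)$ and $M_u(Y,L)$ are defined exactly as $M_u^E(X,K)$ and $M_u^E(Y,L)$ except that these outward-derivative restrictions are removed, and $\widetilde{\mu}_i(X,K)$, $\widetilde{\mu}_j(Y,L)$ are the corresponding counts of index-$i$ (resp. index-$j$) critical points on the faces $K$, $L$ without those restrictions. Since deleting indicators only removes factors from every integrand in the Kac-Rice representations, none of the super-exponentially small bounds established for $M_u^E$ are affected, and the estimates transfer directly.

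For the second equality, I would apply the Kac-Rice metatheorem to $\E\{(\sum_{i=0}^k(-1)^i\widetilde{\mu}_i(X,K))(\sum_{j=0}^l(-1)^j\widetilde{\mu}_j(Y,L))\}$ exactly as in the first display of the proof of Proposition \ref{Prop:simlify the high moment je}, but with the indicators $\mathbbm{1}_{\{\varepsilon^*_\ell X_\ell(t)\ge 0\}}$ and $\mathbbm{1}_{\{\varepsilon^*_\ell Y_\ell(s)\ge 0\}}$ now absent. The identity $|\det\nabla^2 X_{|K}(t)|\,\mathbbm{1}_{\{\text{index}(\nabla^2 X_{|K}(t))=i\}}=(-1)^i\det\nabla^2 X_{|K}(t)$, together with its analogue for $Y$, then collapses the alternating sums over $i$ and $j$ and yields the middle expression carrying only the single indicator $\mathbbm{1}_{\{X(t)\ge u,\,Y(s)\ge u\}}$.

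For the first approximation I would again invoke the Kac-Rice metatheorem to write $\E\{M_u(X,K)M_u(Y,L)\}$ as the same double integral but carrying the Hessian indicators $\mathbbm{1}_{\{\nabla^2 X_{|K}(t)\prec 0\}}\mathbbm{1}_{\{\nabla^2 Y_{|L}(s)\prec 0\}}$; the task is to remove these at a super-exponentially small cost. Following Cases (i) and (ii) of Proposition \ref{Prop:simlify the high moment je}, I would localize to the neighborhood $B(\mathcal{M}_2,\delta_2)$ of the set $\mathcal{M}_2$ in \eqref{Eq:M2}: the integral over the complement $(K\times L)\setminus B(\mathcal{M}_2,\delta_2)$ is super-exponentially small because there $\text{Var}([X(t)+Y(s)]/2\,|\,\nabla X_{|K}(t)=\nabla Y_{|L}(s)=0)$ is strictly below $(1+R)/2$, giving a tail of faster order than $\exp\{-u^2/(1+R)\}$. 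On $B(\mathcal{M}_2,\delta_2)$, condition $({\bf H}3)$ and Proposition \ref{prop:H3} guarantee that the conditional mean Hessians are negative definite with eigenvalues bounded away from zero on the balanced regime $u\vee(\varepsilon_0+R)x<y<(\varepsilon_0+R)^{-1}x$ of the $(x,y)$-domain isolated in \eqref{Eq:A}, so removing the indicators only discards the event that the centered Gaussian Hessian fluctuations exceed order $u$, which is super-exponentially small by the matrix-density estimate \eqref{Eq:E(t,s,xy)}.

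The main obstacle, exactly as in the original proposition, is this removal of the negative-definiteness indicators: it hinges on the decomposition \eqref{Eq:A} of the $(x,y)$-integration region that extracts the regime where both $x-Ry$ and $y-Rx$ are of order $u$, combined with the estimate that the conditional Hessians remain negative definite up to fluctuations of size $cu$. Since this step concerns only the conditional distributions of the Hessians and their conditional means — which are unchanged by the presence or absence of the outward-derivative indicators — the argument carries over with no essential modification. The cases $k=0$ or $l=0$ are simpler, as the corresponding Hessian factors then vanish from the Kac-Rice formula, completing the proof.
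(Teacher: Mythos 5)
Your proposal is correct and follows essentially the same route as the paper, which itself disposes of this proposition by observing that the proof of Proposition \ref{Prop:simlify the high moment je} carries over verbatim once the outward-derivative indicators $\varepsilon^*_\ell X_\ell(t)\ge 0$, $\varepsilon^*_\ell Y_\ell(s)\ge 0$ are dropped; you correctly identify that the only substantive step is removing the indicators $\mathbbm{1}_{\{\nabla^2 X_{|K}(t)\prec 0\}}\mathbbm{1}_{\{\nabla^2 Y_{|L}(s)\prec 0\}}$ via localization to $B(\mathcal{M}_2,\delta_2)$ and the $(x,y)$-domain decomposition, and that this step is insensitive to the presence of the outward constraints. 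The collapse of the alternating sums over $\widetilde{\mu}_i$, $\widetilde{\mu}_j$ via $|\det\nabla^2 X_{|K}(t)|\,\mathbbm{1}_{\{\text{index}(\nabla^2 X_{|K}(t))=i\}}=(-1)^i\det\nabla^2 X_{|K}(t)$ is likewise exactly the paper's argument.
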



\section{Proofs of the main results}\label{sec:proof}
\begin{proof}[Proof of Theorem \ref{Thm:MEC approximation je}]
	By Lemmas \ref{Lem:factorial moments je}, \ref{Lem:cross terms disjoint sets} and \ref{Lem:cross terms je}, together 
	with the fact $M_u^E (X,K)\le M_u (X,K)$, we obtain that the factorial moments and the last two sums in \eqref{Ineq:lowerbound je} 
	are super-exponentially small. It then follows from \eqref{Ineq:upperbound je} and \eqref{Ineq:lowerbound je} that, there exists 
	a constant $\alpha>0$ such that as $u\to \infty$,
	\begin{equation*}
		\begin{split}
			\P&\left\{\sup_{t\in T} X(t) \geq u, \sup_{s\in S} Y(s) \geq u \right\}\\
			&= \sum_{k,l=0}^N\sum_{K\in \partial_k T, \, L\in \partial_l S} \E \{M_u^E (X,K) M_u^E (Y,L) \}+ o\left( \exp \left\{ -\frac{u^2}{1+R} -\alpha u^2 \right\}\right).
		\end{split}
	\end{equation*}	
	The desired result is thus an immediate consequence of Proposition \ref{Prop:simlify the high moment je} and \eqref{Eq:Morse product}.
\end{proof}

\begin{proof}[Proof of Theorem \ref{Thm:MEC approximation je2}]
	 By Remark \ref{remark:M_u}, both inequalities \eqref{Ineq:upperbound je} and \eqref{Ineq:lowerbound je} still hold with $M_u^E(\cdot)$ 
	 replaced by $M_u(\cdot)$. Therefore, the corresponding factorial moments and the last two sums in \eqref{Ineq:lowerbound je} 
	 with $M_u^E(\cdot)$ replaced by $M_u(\cdot)$ are super-exponentially small by Lemmas \ref{Lem:factorial moments je}, 
	 \ref{Lem:cross terms disjoint sets} and \ref{Lem:cross terms je2}. Consequently, there exists a constant $\alpha>0$ such that as $u\to \infty$,
	 \begin{equation*}
	 	\begin{split}
	 		\P&\left\{\sup_{t\in T} X(t) \geq u, \sup_{s\in S} Y(s) \geq u \right\}\\
	 		&= \sum_{k,l=0}^N\sum_{K\in \partial_k T, \, L\in \partial_l S} \E \{M_u (X,K) M_u (Y,L) \}+ o\left( \exp \left\{ -\frac{u^2}{1+R} -\alpha u^2 \right\}\right).
	 	\end{split}
	 \end{equation*}	
	 The desired result is thus an immediate consequence of Proposition \ref{Prop:simlify the high moment je2} and \eqref{Eq:Morse product}.
\end{proof}

\begin{proof}[Proof of Theorem \ref{Thm:MEC approximation}]
	Note that, in the proof of Theorem \ref{Thm:MEC approximation je}, we have seen that the points in $\mathcal{M}_2$ defined in 
	\eqref{Eq:M2} make major contribution to the joint excursion probability. That is, with up to a super-exponentially small error, 
	we can focus only on those product faces, say $J\times F$, whose closure $\bar{J}\times \bar{F}$ contains the unique point 
	$(t^*, s^*)$ with $r(t^*, s^*)=R$ and satisfying $\sigma(J)\subset \mathcal{I}^R_X(t^*,s^*)$ and $\sigma(F)\subset \mathcal{I}^R_Y(t^*,s^*)$ 
	(i.e., the partial derivatives of $r$ are 0 at $(t^*, s^*)$ restricted on $J$ and $F$). Specifically, let
	\begin{equation*}
		\begin{split}
			T^* &=\{J\in \partial_k T: t^*\in \bar{J}, \, \sigma(J)\subset \mathcal{I}^R_X(t^*,s^*), \, k=0, \ldots, N\},\\
			S^* &=\{F\in \partial_\ell S: s^*\in \bar{F}, \, \sigma(F)\subset \mathcal{I}^R_Y(t^*,s^*), \, \ell=0, \ldots, N\};
		\end{split}
	\end{equation*}
and for each $J\in T^*$ and $F\in S^*$, let
\begin{equation*}
	\begin{split}
		M_u^{E^*} (X,J) & := \# \{ t\in J: X(t)\geq u, \nabla X_{|J}(t)=0,  \nabla^2 X_{|J}(t)\prec 0, \\
		& \qquad \qquad \qquad \varepsilon^*_jX_j(t) \geq 0 \ {\rm for \ all}\ j\in \mathcal{I}^R_X(t^*,s^*)\setminus \sigma(J) \},\\
		M_u^{E^*} (Y,F) & := \# \{ s\in F: Y(s)\geq u, \nabla Y_{|F}(s)=0,  \nabla^2 Y_{|F}(s)\prec 0, \\
		& \qquad \qquad \qquad \varepsilon^*_jY_j(s) \geq 0 \ {\rm for \ all}\ j\in \mathcal{I}^R_Y(t^*,s^*)\setminus \sigma(F) \}.
	\end{split}
\end{equation*}
Note that, both inequalities \eqref{Ineq:upperbound je} and \eqref{Ineq:lowerbound je} hold with $M_u^E(\cdot)$ replaced 
by $M_u^{E^*}(\cdot)$ when the corresponding face therein belongs to $T^*$ or $S^*$, and replaced by $M_u(\cdot)$ otherwise. 
Following similar arguments in deriving Theorems \ref{Thm:MEC approximation je} and \ref{Thm:MEC approximation je2}, 
we obtain that, there exists $\alpha>0$ such that as $u\to \infty$,
\begin{equation*}
	\begin{split}
		\P&\left\{\sup_{t\in T} X(t) \geq u, \sup_{s\in S} Y(s) \geq u \right\}\\
		&= \sum_{J\in T^*, \, F\in S^*} \E \{M_u^{E^*} (X,J) M_u^{E^*} (Y,F) \}+ o\left( \exp \left\{ -\frac{u^2}{1+R} -\alpha u^2 \right\}\right).
	\end{split}
\end{equation*}	
	The desired result then follows from Proposition \ref{Prop:simlify the high moment je}.
\end{proof}


\section{Examples}\label{sec:example} 
Throughout this section, we assume that $\{(X(t),Y(s)): t\in T, s\in S\}$, where $T=S=[0,1]$, is an $\R^2$-valued, 
centered, unit-variance Gaussian vector process satisfying $({\bf H}1)$, $({\bf H}2)$ and $({\bf H}3 )$.
\subsection{Example with correlation attaining the maximum at a unique point}
Suppose $r(t,s)$ attains the maximum $R$ only at a point $(t^*,s^*)$, i.e., $r(t^*,s^*)=R$. Let 
\begin{equation*}
\begin{split}
\la_1(t)&={\rm Var}(X'(t)), \ \la_2(s)={\rm Var}(Y'(s)),\
r_1(t,s)=\E\{X'(t)Y(s)\}, \ r_2(t,s)=\E\{X(t)Y'(s)\},\\
r_{11}(t,s)&=\E\{X''(t)Y(s)\}, \ r_{22}(t,s)=\E\{X(t)Y''(s)\},\  r_{12}(t,s)=\E\{X'(t)Y'(s)\}\\
\la_1 &= \la_1(t^*), \ \la_2 = \la_2(s^*), \
R_{11}=r_{11}(t^*,s^*), \ R_{22}=r_{22}(t^*,s^*),\  R_{12}=r_{12}(t^*,s^*).
\end{split}
\end{equation*}

\noindent \textbf{Case 1: $\bm {(t^*,s^*) = (0, 0)}$ and $\bm {r_1(0,0)r_2(0,0)\neq 0}$.} By Theorem \ref{Thm:MEC approximation je2},
\begin{equation*}
\begin{split}
\P\left\{\sup_{t\in T} X(t) \geq u, \sup_{s\in S} Y(s) \geq u \right\}
&= \P\{X(0)\geq u, Y(0) \geq u\} +o\left( \exp \left\{ -\frac{u^2}{1+R} -\alpha u^2 \right\}\right)\\
&= \frac{(1+R)^2}{2\pi \sqrt{1-R^2}}\frac{1}{u^2}e^{-\frac{u^2}{1+R}} (1+o(1)),
\end{split}
\end{equation*}
where the last line is due to a well-know asymptotics for $\P\{X(0)\geq u, Y(0) \geq u\}$, see \cite{LP00}.

\noindent \textbf{Case 2: $\bm {(t^*,s^*) = (0, 0)}$, $\bm{r_1(0,0)= 0}$ and $\bm{r_2(0,0)\neq 0}$.} By Theorem \ref{Thm:MEC approximation}, 
\begin{equation}\label{Eq:boundary-one-side}
\begin{split}
\P&\left\{\sup_{t\in T} X(t) \geq u, \sup_{s\in S} Y(s) \geq u \right\} \\
&= \P\{X(0)\geq u, Y(0) \geq u, X'(0)<0\} + I(u) + o\left( \exp \left\{ -\frac{u^2}{1+R} -\alpha u^2 \right\}\right),
\end{split}
\end{equation}
where
\begin{equation*}
\begin{split}
I(u)&= (-1)\int_0^1 p_{X'(t)}(0) dt \int_u^\infty \int_u^\infty  p_{X(t),Y(0)}(x,y|X'(t)=0)\\
&\quad \times  \E\{X''(t)|X(t)=x, Y(0)=y, X'(t)=0\} dxdy.
\end{split}
\end{equation*}
Since $X'(0)$ is independent of both $X(0)$ and $Y(0)$, we have
\begin{equation}\label{Eq:boundary-one-side-point}
\begin{split}
\P\{X(0)\geq u, Y(0) \geq u, X'(0)<0\}= \frac{(1+R)^2}{4\pi \sqrt{1-R^2}}\frac{1}{u^2}e^{-\frac{u^2}{1+R}} (1+o(1)).
\end{split}
\end{equation}
Let $\Sigma(t)=(\Sigma_{ij}(t))_{i,j=1,2}={\rm Cov}((X(t), Y(0))| X'(t)=0)$, implying $\Sigma_{11}(t)=1$, $\Sigma_{22}(t)=1-r_1^2(t,0)/\la_1(t)$ and $\Sigma_{12}(t)=\Sigma_{21}(t)=r(t,0)$.
Then
\begin{equation*}
\begin{split}
I(u)&=(-1)\int_0^1 \frac{1}{\sqrt{2\pi \la(t)}} dt \int_0^\infty \int_0^\infty \frac{e^{-\frac{1}{2}(x+u, \, y+u)\Sigma(t)^{-1}(x+u, \, y+u)^T}}{2\pi\sqrt{{\rm det}(\Sigma(t))}}\\
&\quad \times  \E\{X''(t)|X(t)=x+u, Y(0)=y+u, X'(t)=0\} dxdy,
\end{split}
\end{equation*}
where the expectation can be written as
$f(t)u + g(t)$ such that
\begin{equation*}
\begin{split}
f(0) = (-\la_1, R_{11}) \left( \begin{array}{cc}
1 & R \\
R & 1 \end{array} \right)^{-1} \left( \begin{array}{c}
1  \\
1 \end{array} \right) =\frac{R_{11}-\la_1}{1+R}.
\end{split}
\end{equation*}
By Theorem 7.5.3 in Tong (1990), as $u\to \infty$, the Mills ratio
\begin{equation}\label{Eq:Mills ratio}
\begin{split}
&\int_0^\infty \int_0^\infty e^{-\frac{1}{2}(x, y)\Sigma(t)^{-1}(x, y)^T - (u, u)\Sigma(t)^{-1}(x, y)^T} dxdy\\
&\sim \frac{1}{u^2[(\Sigma(t)^{-1})_{11}+(\Sigma(t)^{-1})_{21}][(\Sigma(t)^{-1})_{12}+(\Sigma(t)^{-1})_{22}]}.
\end{split}
\end{equation}
Therefore,
\begin{equation*}
\begin{split}
I(u)&\sim (-1)\int_0^1 \frac{1}{\sqrt{2\pi \la_1(t)}}\frac{1}{2\pi\sqrt{{\rm det}(\Sigma(t))}}f(t)\frac{1}{u}e^{-\frac{1}{2}u^2(1, 1)\Sigma(t)^{-1}(1, 1)^T} \\
&\quad \times \frac{1}{[(\Sigma(t)^{-1})_{11}+(\Sigma(t)^{-1})_{21}][(\Sigma(t)^{-1})_{12}+(\Sigma(t)^{-1})_{22}]}  dt.
\end{split}
\end{equation*}
It can be checked that the function
\begin{equation*}
\begin{split}
h(t):=\frac{1}{2}(1, 1)\Sigma(t)^{-1}(1, 1)^T=\frac{2-r_1(t,0)^2/\la_1(t) - 2r(t,0)}{2[1-r_1(t,0)^2/\la_1(t) - r(t,0)^2]}
\end{split}
\end{equation*}
attains its minimum only at $0$ with $h(0)=1/(1+R)$ and $h''(0)=R_{11}(R_{11}-\la_1)/[\la_1(1+R)^2]$. Applying the 
Laplace method (see, for example, Lemma A.3 in \citet{ChengXiao2014}), we obtain
\begin{equation}\label{Eq:boundary-one-side-edge}
\begin{split}
I(u)&\sim \frac{1}{2}\frac{1}{\sqrt{2\pi \la_1}}\frac{1}{2\pi\sqrt{1-R^2}}\frac{\la_1-R_{11}}{1+R}  (1+R)^2 \bigg(\frac{2\pi}{u^2}
\frac{\la_1(1+R)^2}{R_{11}(R_{11}-\la_1)}\bigg)^{1/2} \frac{1}{u}e^{-\frac{u^2}{1+R}}\\
&=\frac{\sqrt{\la_1-R_{11}}}{2\sqrt{-R_{11}}}\frac{(1+R)^2}{2\pi \sqrt{1-R^2}}\frac{1}{u^2}e^{-\frac{u^2}{1+R}}.
\end{split}
\end{equation}
Combining \eqref{Eq:boundary-one-side} with \eqref{Eq:boundary-one-side-point} and \eqref{Eq:boundary-one-side-edge}, 
we obtain
\begin{equation*}
\begin{split}
\P\left\{\sup_{t\in T} X(t) \geq u, \sup_{s\in S} Y(s) \geq u \right\}= \left(\frac{1}{2}+\frac{\sqrt{\la_1-R_{11}}}{2\sqrt{-R_{11}}}\right)
\frac{(1+R)^2}{2\pi \sqrt{1-R^2}}\frac{1}{u^2}e^{-\frac{u^2}{1+R}} (1+o(1)).
\end{split}
\end{equation*}

\noindent \textbf{Case 3: $\bm {(t^*,s^*) = (0, 0)}$ and $\bm{r_1(0,0)= r_2(0,0)\neq 0}$.} By Theorem \ref{Thm:MEC approximation}, 
\begin{equation}\label{Eq:boundary-two-side}
	\begin{split}
		&\P\Big\{\sup_{t\in T} X(t) \geq u, \sup_{s\in S} Y(s) \geq u \Big\}\\
		&= \P\{X(0)\geq u, Y(0) \geq u, X'(0)<0, Y'(0)<0\} \\
		&\quad + (-1)\int_0^1 p_{X'(t)}(0) dt \int_u^\infty \int_u^\infty \int_{-\infty}^0 p_{X(t),Y(0), Y'(0)}(x,y,z|X'(t)=0)\\
		&\qquad \times  \E\{X''(t)|X(t)=x, Y(0)=y, Y'(0)=z, X'(t)=0\} dxdydz\\
		&\quad + (-1)\int_0^1 p_{Y'(s)}(0) ds \int_u^\infty \int_u^\infty \int_{-\infty}^0 p_{X(0),Y(s), X'(0)}(x,y,z|Y'(s)=0)\\
		&\qquad \times  \E\{Y''(s)|X(0)=x, Y(s)=y, X'(0)=z, Y'(s)=0\} dxdydz\\
		&\quad +\int_0^1 \int_0^1 p_{X'(t),Y'(s)}(0,0) dtds \int_u^\infty \int_u^\infty p_{X(t),Y(s)}(x,y|X'(t)=Y'(s)=0)\\
		&\qquad \times  \E\{X''(t)Y''(s)|X(t)=x, Y(s)=y, X'(t)=Y'(s)=0\} dxdy\\
		&\quad + o\Big( \exp \Big\{ -\frac{u^2}{1+R} -\alpha u^2 \Big\}\Big)\\
		&:= I_1(u) + I_2(u) + I_3(u) + I_4(u) + o\Big( \exp \Big\{ -\frac{u^2}{1+R} -\alpha u^2 \Big\}\Big).
	\end{split}
\end{equation}
Since $(X'(0), Y'(0))$, which has covariance matrix ${\rm Var}(X'(0))=\la_1$, ${\rm Var}(Y'(0))=\la_2$ and $\E\{X'(0) Y'(0)\}=R_{12}$, 
is independent of $(X(0), Y(0))$, we have
\begin{equation}\label{Eq:boundary-one-side-point2}
	\begin{split}
		I_1(u) = \P\{X'(0)<0, Y'(0)<0\} \frac{(1+R)^2}{2\pi \sqrt{1-R^2}}\frac{1}{u^2}e^{-\frac{u^2}{1+R}} (1+o(1)).
	\end{split}
\end{equation}
Note that, if $R_{12}=0$, then $\P(X'(0)<0, Y'(0)<0)=1/4$. Similarly to \eqref{Eq:boundary-one-side-edge2}, we have
\begin{equation}\label{Eq:boundary-one-side-edge2}
	\begin{split}
		I_2(u)&\sim \frac{\sqrt{\la_1-R_{11}}}{2\sqrt{-R_{11}}}\frac{(1+R)^2}{2\pi \sqrt{1-R^2}}\frac{1}{u^2}e^{-\frac{u^2}{1+R}}(1+o(1)),\\
		  I_3(u)&\sim \frac{\sqrt{\la_2-R_{22}}}{2\sqrt{-R_{22}}}\frac{(1+R)^2}{2\pi \sqrt{1-R^2}}\frac{1}{u^2}e^{-\frac{u^2}{1+R}}(1+o(1)).
	\end{split}
\end{equation}

Let us compute $I_4$. Let $\Sigma(t,s)=(\Sigma_{ij}(t,s))_{i,j=1,2}={\rm Cov}((X(t), Y(s))| X'(t)=Y'(s)=0)$, implying
\begin{equation*}
	\begin{split}
		\Sigma_{11}(t,s)&= 1-\frac{\la_1(t) r_2^2(t,s)}{\la_1(t)\la_2(s)-r_{12}^2(t,s)},\quad 
		\Sigma_{22}(t,s)= 1-\frac{\la_2(s) r_1^2(t,s)}{\la_1(t)\la_2(s)-r_{12}^2(t,s)},\\
		\Sigma_{12}(t,s)&=\Sigma_{21}(t,s)= r(t,s)+\frac{r_{12}(t,s) r_1(t,s)r_2(t,s)}{\la_1(t)\la_2(s)-r_{12}^2(t,s)}.
	\end{split}
\end{equation*}
Then
\begin{equation}\label{Eq:I(u)-0-infty-1}
	\begin{split}
		I_4(u)&= \int_0^1\int_0^1 \frac{1}{2\pi\sqrt{\la_1(t)\la_2(s)-r_{12}^2(t,s)}}  dtds \int_0^\infty \int_0^\infty \frac{e^{-\frac{1}{2}(x+u, y+u)\Sigma(t,s)^{-1}(x+u, y+u)^T}}{2\pi\sqrt{{\rm det}(\Sigma(t,s))}}\\
		&\qquad \times  \E\{X''(t)Y''(s)|X(t)=x+u, Y(s)=y+u, X'(t)=Y'(s)=0\} dxdy.
	\end{split}
\end{equation}
where the expectation is on the product of two non-centered (conditional) Gaussian variables and hence its 
highest-order term in $u$ can be derived from the product of the means of Gaussian variables. We can write
$\E\{X''(t)|X(t)=x+u, Y(s)=y+u, X'(t)=Y'(s)=0\} = f(t,s)u + f_0(t,s,x,y)$ such that
\begin{equation*}
	\begin{split}
		f(0,0) = (-\la_1, R_{11}) \left( \begin{array}{cc}
			1 & R \\
			R & 1 \end{array} \right)^{-1} \left( \begin{array}{c}
			1  \\
			1 \end{array} \right) =\frac{R_{11}-\la_1}{1+R};
	\end{split}
\end{equation*}
and write $\E\{Y''(s)|X(t)=x+u, Y(s)=y+u, X'(t)=Y'(s)=0\} = g(t,s)u + g_0(t,s,x,y)$ such that
\begin{equation*}
	\begin{split}
		g(0,0) = (R_{22}, -\la_2) \left( \begin{array}{cc}
			1 & R \\
			R & 1 \end{array} \right)^{-1} \left( \begin{array}{c}
			1  \\
			1 \end{array} \right) =\frac{R_{22}-\la_2}{1+R}.
	\end{split}
\end{equation*}
Therefore, in the expectation in \eqref{Eq:I(u)-0-infty-1}, the highest-order term in $u$ evaluated at $(0,0)$ is given by 
$[(\la_1-R_{11})(\la_2-R_{22})/(1+R)^2]u^2$. Note that the Mills ratio in \eqref{Eq:Mills ratio} with $\Sigma(t)$ replaced 
by $\Sigma(t,s)$ is asymptotically $(1+R)^2/u^2$ at $(t,s)=(0,0)$. Plugging these into \eqref{Eq:I(u)-0-infty-1} yields
\begin{equation*}
	\begin{split}
		I_4(u)&\sim \int_0^1\int_0^1 \frac{1}{2\pi\sqrt{\la_1(t)\la_2(s)-r_{12}^2(t,s)}}\frac{1}{2\pi\sqrt{{\rm det}(\Sigma(t,s))}}f(t,s)g(t,s)u^2   \\
		&\qquad \times \frac{1}{u^2[(\Sigma(t,s)^{-1})_{11}+(\Sigma(t,s)^{-1})_{21}]^2} e^{-\frac{1}{2}u^2(1, 1)\Sigma(t,s)^{-1}(1, 1)^T} dtds.
	\end{split}
\end{equation*}
Since
\begin{equation*}
	\begin{split}
		h(t):=\frac{1}{2}(1, 1)\Sigma(t,s)^{-1}(1, 1)^T=\frac{1}{2}\frac{\Sigma_{11}(t,s)+\Sigma_{22}(t,s)-2\Sigma_{12}(t,s)}
		{\Sigma_{11}(t,s)\Sigma_{22}(t,s)- \Sigma_{12}^2(t,s)}
	\end{split}
\end{equation*}
attains its minimum only at $(t,s)=(0,0)$ with $h(0)=1/(1+R)$ and
\begin{equation*}
	\begin{split}
		&\nabla^2 h(0,0) 
		=\frac{1}{(1+R)^2(\la_1\la_2-R_{12}^2)} \left( \begin{array}{cc}
			(\la_1-R_{11})(R_{12}^2-\la_2R_{11}) & R_{12}(\la_1-R_{11})(R_{22}-\la_2) \\
			R_{12}(\la_1-R_{11})(R_{22}-\la_2) & (\la_2-R_{22})(R_{12}^2-\la_1R_{22}) \end{array} \right),\\
		&{\rm det}(\nabla^2 h(0,0)) = \frac{(\la_1-R_{11})(\la_2-R_{22})(R_{11}R_{22}-R_{12}^2)}{(1+R)^4(\la_1\la_2-R_{12}^2)}.
	\end{split}
\end{equation*}
Applying the Laplace method (see Lemma A.3 in \citep{ChengXiao2014}) yields
\begin{equation}\label{Eq:I4}
	\begin{split}
		I_4(u)&\sim \P(Z_1>0, Z_2>0)\frac{1}{2\pi\sqrt{\la_1\la_2-R_{12}^2}}\frac{1}{2\pi\sqrt{1-R^2}}\frac{(\la_1-R_{11})(\la_2-R_{22})}{(1+R)^2}u^2   \\
		&\qquad \times \frac{(1+R)^2}{u^2} \frac{2\pi}{u^2}\bigg(\frac{(1+R)^4(\la_1\la_2-R_{12}^2)}{(\la_1-R_{11})(\la_2-R_{22})(R_{11}R_{22}-R_{12}^2)}\bigg)^{1/2} e^{-\frac{u^2}{1+R}}\\
		&=\P(Z_1>0, Z_2>0)\frac{\sqrt{(\la_1-R_{11})(\la_2-R_{22})}}{\sqrt{R_{11}R_{22}-R_{12}^2}} \frac{(1+R)^2}{2\pi\sqrt{1-R^2}}\frac{1}{u^2}e^{-\frac{u^2}{1+R}},
	\end{split}
\end{equation}
where $(Z_1, Z_2)$ is a centered bivariate Gaussian variable with covariance $\nabla^2 h(0,0)$. Plugging \eqref{Eq:boundary-one-side-point2}, \eqref{Eq:boundary-one-side-edge2} and \eqref{Eq:I4} into \eqref{Eq:boundary-two-side}, we obtain
\begin{equation*}
	\begin{split}
		& \P\Big\{\sup_{t\in T} X(t) \geq u, \sup_{s\in S} Y(s) \geq u \Big\}= \Bigg[\P(X'(0)<0, Y'(0)<0)+\frac{\sqrt{\la_1-R_{11}}}{2\sqrt{-R_{11}}}  + \frac{\sqrt{\la_2-R_{22}}}{2\sqrt{-R_{22}}} \\
		&\quad+ \P(Z_1>0, Z_2>0)\frac{\sqrt{(\la_1-R_{11})(\la_2-R_{22})}}{\sqrt{R_{11}R_{22}-R_{12}^2}}\Bigg]\frac{(1+R)^2}{2\pi \sqrt{1-R^2}}\frac{1}{u^2}e^{-\frac{u^2}{1+R}}(1+o(1)),
	\end{split}
\end{equation*}
where the two probabilities on right become $1/4$ when $R_{12}=0$.

\noindent \textbf{Case 4: $\bm {(t^*,s^*) = (t^*, 0)}$, where $\bm{t^*\in (0,1)}$ and $\bm{r_2(t^*,0)\neq 0}$.} By Theorem 
\ref{Thm:MEC approximation} and similar arguments in Case 2, we obtain
\begin{equation*}
	\begin{split}
		\P\Big\{\sup_{t\in T} X(t) \geq u, \sup_{s\in S} Y(s) \geq u \Big\}= \frac{\sqrt{\la_1-R_{11}}}{\sqrt{-R_{11}}}\frac{(1+R)^2}
		{2\pi \sqrt{1-R^2}}\frac{1}{u^2}e^{-\frac{u^2}{1+R}}(1+o(1)).
	\end{split}
\end{equation*}

\noindent \textbf{Case 5: $\bm {(t^*,s^*) = (t^*, 0)}$, where $\bm{t^*\in (0,1)}$ and $\bm{r_2(t^*,0)= 0}$.} By Theorem 
\ref{Thm:MEC approximation} and similar arguments in Case 3, we obtain
\begin{equation*}
	\begin{split}
		\P\Big\{\sup_{t\in T} X(t) \geq u, \sup_{s\in S} Y(s) \geq u \Big\}&= \Bigg[\frac{\sqrt{\la_1-R_{11}}}{\sqrt{-R_{11}}}  
		+ \frac{\sqrt{(\la_1-R_{11})(\la_2-R_{22})}}{2\sqrt{R_{11}R_{22}-R_{12}^2}}\Bigg]\\
		&\quad \times \frac{(1+R)^2}{2\pi \sqrt{1-R^2}}\frac{1}{u^2}e^{-\frac{u^2}{1+R}}(1+o(1)).
	\end{split}
\end{equation*}

\noindent \textbf{Case 6:} $\bm {(t^*,s^*)\in (0,1)^2}$. By Theorem \ref{Thm:MEC approximation} and similar arguments in Case 3, we obtain
\begin{equation*}
	\begin{split}
		\P\Big\{\sup_{t\in T} X(t) \geq u, \sup_{s\in S} Y(s) \geq u \Big\}= \frac{\sqrt{(\la_1-R_{11})(\la_2-R_{22})}}{\sqrt{R_{11}R_{22}-R_{12}^2}} \frac{(1+R)^2}{2\pi \sqrt{1-R^2}}\frac{1}{u^2}e^{-\frac{u^2}{1+R}}(1+o(1)).
	\end{split}
\end{equation*}

\subsection{Examples with correlation attaining the maximum on a line}
Here we consider the bivariate Gaussian random fields in \citet{ZhouXiao07}, where the smooth case was not studied 
since the double sum method therein is not applicable. Let $X(t)$ and $Y(s)$ be smooth stationary Gaussian processes 
with covariances satisfying
\begin{equation*}
	\begin{split}
		\E\{X(0)X(t)\} &= 1- \frac{\la_1}{2}|t|^2 (1+o(1)), \quad {\rm as } \ |t| \to 0,\\
		\E\{Y(0)Y(s)\} &= 1- \frac{\la_2}{2}|s|^2 (1+o(1)), \quad {\rm as } \ |s| \to 0,
	\end{split}
\end{equation*}
which implies ${\rm Var}(X'(t))=-\E\{X(t)X''(t)\}=\la_1$ and ${\rm Var}(Y'(s))=-\E\{Y(s)Y''(s)\}=\la_2$. Assume that the correlation of $X$ and $Y$ satisfies
\[
r(t,s) = \E\{X(t)Y(s)\} =  \rho(|t-s|), \quad \forall t,s\in [0,1],
\]
where $\rho$ is a real function. Suppose $\rho$ attains its maximum $R$ only at 0 with $\rho'(0)=0$ and $\rho''(0)<0$. 
This indicates that the maximum correlation $R$ is only achieved on the diagonal line $\{t=s: 0\le t,s\le 1\}$. By 
Theorem \ref{Thm:MEC approximation je}, we have
\begin{equation*}
	\begin{split}
		&\quad \P\Big\{\sup_{t\in [0,1]} X(t) \geq u, \sup_{s\in [0,1]} Y(s) \geq u \Big\}\\
		&=\int_0^1 \int_0^1 p_{X'(t),Y'(s)}(0,0) dtds \int_u^\infty \int_u^\infty p_{X(t),Y(s)}(x,y|X'(t)=Y'(s)=0)\\
		&\qquad \times  \E\{X''(t)Y''(s)|X(t)=x, Y(s)=y, X'(t)=Y'(s)=0\} dxdy\\
		&\quad +\P\{X(0)\geq u, Y(0) \geq u, X'(0)<0, Y'(0)<0\} \\
		&\quad + \P\{X(1)\geq u, Y(1) \geq u, X'(1)>0, Y'(1)>0\} + o\Big( \exp \Big\{ -\frac{u^2}{1+R} -\alpha u^2 \Big\}\Big)\\
		&= I(u)(1+o(1)),
	\end{split}
\end{equation*}
where $I(u)$ denotes the integral term in the second and third lines. We shall derive below the asymptotics of
$I(u)$ which gives the highest-order term in $u$. By the stationarity and change of variables (using $z=s$ and $w=t-s$ 
for $0<s<t<1$ and the symmetry property),
\begin{equation*}
	\begin{split}
		I(u)&= \int_0^1 \int_0^1 p_{X'(0),Y'(|t-s|)}(0,0) dtds \int_u^\infty \int_u^\infty p_{X(0),Y(|t-s|)}(x,y|X'(0)=Y'(|t-s|)=0)\\
		&\qquad \times  \E\{X''(0)Y''(|t-s|)|X(0)=x, Y(|t-s|)=y, X'(0)=Y'(|t-s|)=0\} dxdy\\
		&= 2\int_0^1 (1-t)p_{X'(0),Y'(t)}(0,0) dt \int_u^\infty \int_u^\infty p_{X(0),Y(t)}(x,y|X'(0)=Y'(t)=0)\\
		&\qquad \times  \E\{X''(0)Y''(t)|X(0)=x, Y(t)=y, X'(0)=Y'(t)=0\} dxdy \\
		&:= 2I_0(u).
	\end{split}
\end{equation*}
Let $\Sigma(t)=(\Sigma_{ij}(t))_{i,j=1,2}={\rm Cov}((X(0), Y(t))| X'(0)=Y'(t)=0)$, implying
\begin{equation*}
	\begin{split}
		\Sigma_{11}(t)&= 1-\frac{\la_1 \rho'(t)^2}{\la_1\la_2-\rho''(t)^2},\quad 
		\Sigma_{22}(t)= 1-\frac{\la_2 \rho'(t)^2}{\la_1\la_2-\rho''(t)^2},\\
		\Sigma_{12}(t)&=\Sigma_{21}(t)= \rho(t)+\frac{\rho''(t)\rho'(t)^2}{\la_1\la_2-\rho''(t)^2}.
	\end{split}
\end{equation*}
Then
\begin{equation}\label{Eq:I(u)-0-infty-2}
	\begin{split}
		I_0(u)&= \int_0^1 \frac{1-t}{2\pi\sqrt{\la_1\la_2-\rho''(t)^2}}  dt \int_0^\infty \int_0^\infty \frac{1}{2\pi\sqrt{{\rm det}(\Sigma(t))}}
		e^{-\frac{1}{2}(x+u, y+u)\Sigma(t)^{-1}(x+u, y+u)^T}\\
		&\qquad \times  \E\{X''(0)Y''(t)|X(0)=x+u, Y(t)=y+u, X'(0)=Y'(t)=0\} dxdy.
	\end{split}
\end{equation}
We have $\E\{X''(0)|X(0)=x+u, Y(t)=y+u, X'(0)=Y'(t)=0\} = f(t)u + f_0(t,x,y)$ with
\begin{equation*}
	\begin{split}
		f(0) = (-\la_1, \rho''(0)) \left( \begin{array}{cc}
			1 & R \\
			R & 1 \end{array} \right)^{-1} \left( \begin{array}{c}
			1  \\
			1 \end{array} \right) =\frac{\rho''(0)-\la_1}{1+R};
	\end{split}
\end{equation*}
and $\E\{Y''(t)|X(0)=x+u, Y(t)=y+u, X'(0)=Y'(t)=0\}= g(t)u + g_0(t,x,y)$ with
\begin{equation*}
	\begin{split}
		g(0) = (\rho''(0), -\la_2) \left( \begin{array}{cc}
			1 & R \\
			R & 1 \end{array} \right)^{-1} \left( \begin{array}{c}
			1  \\
			1 \end{array} \right) =\frac{\rho''(0)-\la_2}{1+R}.
	\end{split}
\end{equation*}
Therefore, in the expectation in \eqref{Eq:I(u)-0-infty-2}, the highest-order term in $u$ evaluated at $t=0$ is given by $[(\la_1-\rho''(0))
(\la_2-\rho''(0))/(1+R)^2]u^2$. Note that the Mills ratio in \eqref{Eq:Mills ratio} is asymptotically $(1+R)^2/u^2$ at $t=0$. Plugging these 
into \eqref{Eq:I(u)-0-infty-2} yields
\begin{equation*}
	\begin{split}
		I_0(u)&\sim \int_0^1 \frac{1-t}{2\pi\sqrt{\la_1\la_2-\rho''(t)^2}}\frac{1}{2\pi\sqrt{{\rm det}(\Sigma(t))}}f(t)g(t)   \\
		&\qquad \times \frac{1}{[(\Sigma(t)^{-1})_{11}+(\Sigma(t)^{-1})_{12}]^2} e^{-\frac{1}{2}u^2(1, 1)\Sigma(t)^{-1}(1, 1)^T} dt.
	\end{split}
\end{equation*}
Since
\begin{equation*}
	\begin{split}
		h(t):=\frac{1}{2}(1, 1)\Sigma(t)^{-1}(1, 1)^T=\frac{1}{2}\frac{\Sigma_{11}(t)+\Sigma_{22}(t)-2\Sigma_{12}(t)}{\Sigma_{11}(t)\Sigma_{22}(t)- \Sigma_{12}(t)^2}
	\end{split}
\end{equation*}
attains its minimum only at $t=0$ with $h(0)=1/(1+R)$ and
\begin{equation*}
	\begin{split}
		h''(0)=\frac{-\rho''(0)(\la_1-\rho''(0))(\la_2-\rho''(0))}{(1+R)^2[\la_1\la_2-\rho''(0)^2]}.
	\end{split}
\end{equation*}
Applying the Laplace method (see Lemma A.3 in \citep{ChengXiao2014}) yields
\begin{equation*}
	\begin{split}
		I_0(u)&\sim \frac{1}{2}\frac{1}{2\pi\sqrt{\la_1\la_2-\rho''(0)^2}}\frac{1}{2\pi\sqrt{1-R^2}}\frac{[\la_1-\rho''(0)][\la_2-\rho''(0)]}{(1+R)^2}u^2   \\
		&\qquad \times \frac{(1+R)^2}{u^2} \bigg(-\frac{2\pi}{u^2}\frac{(1+R)^2[\la_1\la_2-\rho''(0)^2]}{\rho''(0)(\la_1-\rho''(0))(\la_2-\rho''(0))}\bigg)^{1/2} e^{-\frac{u^2}{1+R}}\\
		&=\frac{1}{2}\frac{\sqrt{(\la_1-\rho''(0))(\la_2-\rho''(0))}(1+R)}{(2\pi)^{3/2}\sqrt{1-R^2}\sqrt{-\rho''(0)}} \frac{1}{u}e^{-\frac{u^2}{1+R}}.
	\end{split}
\end{equation*}
Thus we obtain
\begin{equation*}
	\begin{split}
		\P&\Big\{\sup_{t\in [0,1]} X(t) \geq u, \sup_{s\in [0,1]} Y(s) \geq u \Big\}\\
		&= \frac{1}{(2\pi)^{3/2}} \sqrt{\frac{(\la_1-\rho''(0))(\la_2-\rho''(0))(1+R)}{-\rho''(0)(1-R)}}\frac{1}{u} e^{-\frac{u^2}{1+R}}(1+o(1)).
	\end{split}
\end{equation*}

\bibliographystyle{plainnat}
\begin{small}

\end{small}

\begin{quote}
	\begin{small}
		
		\textsc{Dan Cheng}\\
		School of Mathematical and Statistical Sciences\\
		Arizona State University\\
		900 S Palm Walk\\
		Tempe, AZ 85281, USA\\
		E-mail: \texttt{cheng.stats@gmail.com}

		\vspace{.1in}
		
		\textsc{Yimin Xiao}\\
		Department of Statistics and Probability \\
		Michigan State University\\
		619 Red Cedar Road, C-413 Wells Hall\\
		East Lansing, MI 48824, USA\\
		E-mail: \texttt{xiaoy@msu.edu}

	\end{small}
\end{quote}

\end{document}